\newtheorem{theorem}{Theorem}[section]
\newtheorem{lemma}[theorem]{Lemma}
\newtheorem{prop}[theorem]{Proposition}
\newtheorem{cor}[theorem]{Corollary}
\theoremstyle{definition}
\newtheorem{defi}[theorem]{Definition}
\newtheorem{example}[theorem]{Example}
\theoremstyle{remark}
\newtheorem{remark}[theorem]{Remark}
\numberwithin{equation}{section}
\newcommand{\la}{\label}
\newcommand{\GL}{\mathrm{GL}}
\newcommand{\Ker}{\mathrm{Ker}}
\newcommand{\Res}{\mathrm{Res}}
\newcommand{\Spec}{\mathrm{Spec}}
\newcommand{\Mod}{\mathrm{Mod}}
\newcommand{\End}{\mathrm{End}}
\newcommand{\U}{\mathcal{U}}
\def\reg{\mathrm{reg}}
\def\tor{\mathrm{Tor}}
\def\deg{\mathrm{deg}\,}
\newcommand{\into}{\hookrightarrow}
\def\d{d}
\def\C{\mathbb C}
\def\c{\mathbb{C}}
\def\e{\boldsymbol{e}}
\def\A{\mathcal{A}}
\def\k{\mathbb{K}}
\def\Z{\mathbb{Z}}
\def\QQ{\mathbf{Q}}
\def\N{\mathbb{N}}
\def\P{\mathcal{P}}
\def\T{\mathcal{T}}
\def\D{\mathcal{D}}
\def\O{\mathcal{O}}
\def\h{\mathfrak{h}}
\def\g{\mathfrak{g}}
\def\sln{{\mathfrak{s}\mathfrak{l}}_n}
\def\n{\mathfrak{n}}
\def\r{\mathfrak{r}}
\def\grd{\mathrm{gr}}
\def\sl2{{\mathfrak{s}\mathfrak{l}}_2}
\def\vreg{V_{\rm{reg}}}
\def\Hreg{H_{\rm{reg}}}
\def\Mreg{M_{\rm{reg}}}
\def\KZ{\mathtt{KZ}}
\def\kz{\mathrm{kz}}
\def\H{\mathcal{H}}
\def\Reg{\mathrm{Reg}}
\def\mod{\mathrm{mod}}
\begin{document}
\title[Dunkl Operators and Quasi-invariants]{Dunkl Operators and Quasi-invariants of Complex Reflection Groups}
\author{Yuri Berest}
\address{Department of Mathematics,
Cornell University, Ithaca, NY 14853-4201, USA}
\email{berest@math.cornell.edu}
\thanks{The first author was supported in part by NSF Grant DMS 09-01570.}
\author{Peter Samuelson}
\address{Department of Mathematics,
Cornell University, Ithaca, NY 14853-4201, USA}
\email{psamuelson@math.cornell.edu}

\subjclass[2010]{Primary 16S38; Secondary 14A22, 17B45}
\keywords{Complex reflection group, Coxeter group, rational Cherednik algebra, Dunkl operator,
Hecke algebra, ring of differential operators, Weyl algebra}
\begin{abstract}
In these notes we give an introduction to representation theory of rational 
Cherednik algebras associated to complex reflection groups. We discuss  
applications of this theory in the study of finite-dimensional 
representations of the Hecke algebras and polynomial quasi-invariants 
of these groups.
\end{abstract}
\maketitle

\section{Introduction}
These are lecture notes of a minicourse given by the first author at the summer school on Quantization
at the University of Notre Dame in June 2011. The notes were written up and expanded by the second author who took the liberty of adding a few interesting results and proofs from the literature. In a broad sense, our goal is to give an introduction to representation theory of rational Cherednik algebras and some of its
recent applications. More specifically, we focus on the two concepts featuring in the title (Dunkl operators and quasi-invariants) and explain the relation between them. The course was originally designed for
graduate students and non-experts in representation theory. In these notes, we tried to preserve an informal style,
even at the expense of making imprecise claims and sacrificing rigor.

The interested reader may find more details and proofs in the following references. The original papers on representation theory of the rational Cherednik algebras
are \cite{EG02}, \cite{BEG03, BEG03b}, \cite{DO03} and \cite{GGOR03}; surveys of various aspects of this theory
can be found in \cite{Rou05}, \cite{Et07}, \cite{Gor07} and \cite{Gor10}.
The quasi-invariants of Coxeter (real reflection) groups first appeared in the work of O.~Chalykh and A.~Veselov \cite{CV90, CV93} (see also \cite{VSC93}); the link to the rational Cherednik algebras associated to these groups was established in \cite{BEG03}; various results and applications of Coxeter quasi-invariants can be found in \cite{EG02b}, \cite{FV02}, \cite{GW03}, \cite{GW06}, \cite{BM08}; for a readable survey, we refer to \cite{ES03}.

The notion of a quasi-invariant for a general complex reflection group was introduced
in \cite{BC11}. This last paper extends the results of \cite{BEG03}, unifies the proofs and
gives new applications of quasi-invariants in representation theory and noncommutative algebra.
It is the main reference for the present lectures.

\section{Lecture 1}

\subsection{Historical remarks}
The theory of rational Cherednik algebras was historically motivated by developments in two different areas: integrable systems and multivariable special functions. In each of these areas, the classical
representation theory of semisimple complex Lie algebras played a prominent role. On the integrable systems side, one should single out the work of M.~Olshanetsky and A.~Perelomov \cite{OP83} who found a remarkable
generalization of the Calogero-Moser integrable systems for an arbitrary semisimple Lie algebra. On the special functions side, the story began with the fundamental discovery of Dunkl operators \cite{Dun89} that transformed much of the present day harmonic analysis. In 1991, G.~Heckman \cite{Hec91} noticed a relationship between the two constructions which is naturally explained by the theory of rational Cherednik algebras. The theory itself was developed
by P.~Etingof and V.~Ginzburg in their seminal paper \cite{EG02}. In fact, this last paper introduced a more
general class of algebras (the so-called symplectic reflection algebras) and studied the representation
theory of these algebras at the `quasi-classical' ($t=0$) level\footnote{As yet another precursor of
the theory of Cherednik algebras, one should mention the beautiful paper \cite{Wil98} which examines the link between the classical Calogero-Moser systems and solutions of the (infinite-dimensional) Kadomtsev-Petviashvili integrable system.}. At the `quantum' ($t=1$) level, the representation theory of the rational Cherednik algebras was developed in
\cite{BEG03, BEG03b}, \cite{DO03} and \cite{GGOR03}, following the insightful suggestion by E.~Opdam and R.~Rouquier to model this theory parallel to the theory of universal enveloping algebras of semisimple complex Lie algebras.

In this first lecture, we briefly review the results of \cite{OP83} and \cite{Dun89} in their original setting and  explain the link between these two papers discovered in \cite{Hec91}. Then, after giving a necessary background on complex reflection groups, we introduce the Dunkl operators and sketch the proof of their commutativity following
\cite{DO03}.
In the next lecture, we define the rational Cherednik algebras and show how Heckman's observation can be reinterpreted
in the language of these algebras.

\subsection{Calogero-Moser systems and the Dunkl operators}
\label{heckmansargumentsection}
\subsubsection{The quantum Calogero-Moser system}
Let $ \h = \c^n $, and let $ \h^\reg  $ denote the complement (in $ \h $) of the union of hyperplanes 
$ x_i - x_j=0 $ for  $ 1 \le i \ne j \le n $. Write $ \c[\h^\reg] $ for the ring of regular functions on $ \h^\reg $ (i.e., the rational functions on $ \c^n $ with poles along $ x_i - x_j = 0 $). Consider the following differential operator acting on $\, \c[\h^\reg]\,$
\[
H = \sum_{i=1}^n \left(\frac{\partial}{\partial x_i}\right)^2 \, - \,c(c+1) \sum_{i\not= j} \frac{1}{(x_i-x_j)^2}\ ,
\]
where $ c $ is a complex parameter. This operator is called the quantum Calogero-Moser Hamiltonian: it can be viewed
as the Schr\"odinger operator of the system of $n$ quantum particles on the line with pairwise interaction proportional to $(x_i-x_j)^{-2}$. It turns out that $ H $ is part of a family of $n$ partial differential operators $\, L_j:\, \c[\h^\reg] \to \c[\h^\reg]\,$ of the form
\[
L_j := \sum_{i=1}^n \left(\frac{\partial}{\partial x_i}\right)^j + \textrm{lower order terms} \ ,
\]
satisfying the properties:
\begin{enumerate}
\item $L_j$ is $S_n$-invariant (where $S_n$ is the $n$-th symmetric group acting on $\c^n$ by
permutations of coordinates),
\item $L_j$ is a homogeneous operator of degree $(-j)$,
\item $L_2 = H$,
\item $[L_j,L_k] = 0$, i.e. the $L_j$'s commute.
\end{enumerate}
The last property (commutativity) means that $ \{L_j\}_{1 \leq j \leq n} $ form a quantum integrable system which is called the (quantum) Calogero-Moser system.

\subsubsection{A root system generalization}
\la{opcm}
In \cite{OP83}, the authors constructed a family of commuting differential operators for a Lie algebra
$ \mathfrak g $ that specializes to the Calogero-Moser system when $\g = \sln(\c)$. Precisely,
let $ \mathfrak g $ be a finite-dimensional complex semisimple Lie algebra, $\,\h \subset \g\,$ a Cartan subalgebra
of $\g$, $ \h^* $ its dual vector space, $W$ the corresponding Weyl group, $\,R \subset \h^*\,$ a system of roots of $ \g $, and $ R_+ \subset R $
a choice of positive roots (see, e.g., \cite{Hum78} for definitions of these standard terms).
To each positive root $ \alpha \in R_+ $ we assign a complex number (multiplicity) $ c_\alpha \in \c $
so that two roots in the same orbit of $W$ in $R$ have the same multiplicities; in other words,
we define a $W$-invariant function $\,c:\,R_+ \to \c\,$, which we write as $\, \alpha \mapsto c_\alpha $.
Now, let $\h^\reg$ denote the complement (in $\h$) of the union of reflection hyperplanes of $W$.
The operator $H$ of the previous section generalizes to the differential operator
$\,H_\g:\,\c[\h^\reg] \to \c[\h^\reg]\,$, which is defined by the formula
\[
H_\g := \Delta_\h - \sum_{\alpha \in R_+} \frac{c_\alpha(c_{\alpha} + 1)(\alpha,\alpha)}{(\alpha,x)^2}\ ,
\]
where $ \Delta_\h $ stands for the usual Laplacian on $\h$.

\begin{theorem}[see \cite{OP83}]
\la{OPgen}
For each $\,P \in \c[\h^*]^W\,$, there is a differential operator
\[
L_P = P(\partial_\xi) + \textrm{lower order terms}
\]
acting on $ \c[\h^\reg]\,$, with lower order terms depending on $c$, such that
\begin{enumerate}
\item $L_P$ is $W$-invariant with respect to the natural action of $W$ on $ \h^\reg $
\item $ L_P $ is homogeneous of degree $-\,\deg(P)$,
\item $ L_{\lvert x\rvert^2} = H_\g$,
\item $[L_P,L_Q] = 0$ for all $\,P,Q \in \c[\h^*]^W$.
\end{enumerate}
The assignment $\,P \mapsto L_P\,$ defines an injective algebra homomorphism
$$
\c[\h^*]^W \hookrightarrow \D(\h^\reg)^W \ ,
$$
where $ \D(\h^\reg)^W  $ is the ring of $W$-invariant differential operators on $ \h^\reg $.
\end{theorem}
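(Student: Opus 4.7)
The plan is to construct the operators $L_P$ via Dunkl operators and then extract $W$-invariant differential operators using the symmetrizing idempotent. For $\xi\in\h$, define
$$T_\xi := \partial_\xi - \sum_{\alpha\in R_+} \frac{c_\alpha\,\alpha(\xi)}{\alpha(x)}\,(1-s_\alpha) \ \in\ \D(\h^\reg)\#W,$$
where $s_\alpha\in W$ is the reflection through the hyperplane $\alpha=0$. Each $T_\xi$ is homogeneous of degree $-1$ and satisfies the equivariance $wT_\xi w^{-1}=T_{w\xi}$. The essential input, to be proved in the next subsection following \cite{DO03}, is commutativity: $[T_\xi,T_\eta]=0$ for all $\xi,\eta\in\h$. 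Granting this, the assignment $P\mapsto P(T)$ extends unambiguously to an algebra homomorphism $\c[\h^*]\to\D(\h^\reg)\#W$, and the equivariance forces $P(T)\in(\D(\h^\reg)\#W)^W$ whenever $P\in\c[\h^*]^W$.

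To extract a purely differential operator, introduce the symmetrizer $e=\tfrac{1}{|W|}\sum_{w\in W}w$ and use the standard algebra isomorphism
$$\Res:\;e\bigl(\D(\h^\reg)\#W\bigr)e\;\xrightarrow{\;\sim\;}\;\D(\h^\reg)^W$$
obtained by restricting the action to $\c[\h^\reg]^W=e\cdot\c[\h^\reg]$. For $W$-invariant $P$ we have $eP(T)e=P(T)e$, so we may set $L_P:=\Res(P(T)e)$. Property (1) is then automatic; (2) follows because $T_\xi$ has degree $-1$ and $\Res$ preserves degree; and (4) is a direct consequence of commutativity of the $P(T)$'s together with $\Res$ being an algebra homomorphism. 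The shape $L_P=P(\partial)+(\text{lower order})$ and the injectivity of $P\mapsto L_P$ both follow by passing to the differential-operator associated graded: the principal symbol of $T_\xi$ is $\partial_\xi$, hence that of $L_P$ is $P(\partial)$, so $L_P=0$ forces $P=0$. For Property (3), expanding $\sum_i T_{\xi_i}^2$ in an orthonormal basis and restricting to $W$-invariants yields the ``naive'' Dunkl Laplacian $\Delta_\h-2\sum_{\alpha\in R_+}c_\alpha\,\alpha(x)^{-1}\partial_\alpha$; the identification with $H_\g$ then follows by a gauge transformation involving an appropriate power of the discriminant $\delta:=\prod_{\alpha\in R_+}\alpha$. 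This is Heckman's observation \cite{Hec91}, to be reviewed in the next subsection, and the gauge transformation preserves the other three properties.

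The main obstacle is the commutativity $[T_\xi,T_\eta]=0$. Expanded in $\D(\h^\reg)\#W$, this commutator is a double sum over pairs of roots of rational functions in $x$ times products of reflections; the delicate check is the vanishing of the coefficient of $s_\alpha s_\beta$ for linearly independent $\alpha,\beta$, which reduces by $W$-equivariance to a combinatorial root-system identity. The approach of \cite{DO03} interprets $\{T_\xi\}_{\xi\in\h}$ as the components of a $W$-equivariant one-form on $\h^\reg$ with values in the group algebra $\c W$ and establishes commutativity by checking flatness of the resulting Dunkl--Cherednik connection directly.
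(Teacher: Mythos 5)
Your proposal is correct and follows essentially the same route as the paper's sketch of Heckman's argument: build commuting Dunkl operators, use the equivariance $\hat w\,\nabla_\xi=\nabla_{w(\xi)}\hat w$ to see that $P(\nabla)$ preserves $\c[\h^\reg]^W$ and acts there as a purely differential operator, set $L_P:=\Res^W[P(\nabla)]$, and deduce $[L_P,L_Q]=0$ from commutativity of the Dunkl operators together with multiplicativity of the restriction. The only real deviation is your choice of the $(1-s_\alpha)$ gauge for $T_\xi$, which forces the extra conjugation by $\prod_{\alpha\in R_+}\alpha^{c_\alpha}$ to recover property (3), whereas the paper works directly with $\nabla_\xi=\partial_\xi+\sum_{\alpha\in R_+}c_\alpha\frac{(\alpha,\xi)}{(\alpha,x)}\hat s_\alpha$, for which $\Res^W|\nabla_\xi|^2=H_\g$ on the nose; your explicit symbol argument for injectivity and for the shape $L_P=P(\partial)+\text{lower order}$ is a welcome addition that the paper leaves implicit.
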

In general, finding nontrivial families of commuting differential operators is a difficult problem,
so it is natural to ask how to construct the operators $L_P$. An ingenious idea was proposed
by G.~Heckman \cite{Hec91}. He observed that the operators $L_P$ can be obtained by restricting
the composites of certain first order {\it differential-reflection} operators which are deformations
of the usual partial derivatives $ \partial_\xi $; the commutativity $ [L_P, L_Q]=0 $  is then an easy consequence
of the commutativity of these differential-reflection operators. The commuting
differential-reflection operators were discovered earlier by Ch.~Dunkl in his work on
multivariable orthogonal polynomials (see \cite{Dun89}). We begin by briefly recalling
the definition of Dunkl operators.

\subsubsection{Heckman's argument}
For a positive root $\alpha \in R_+$, denote by $ \hat s_\alpha $
the reflection operator acting on functions in the natural way:
$$
\hat s_\alpha :\ \c[\h^\reg] \to \c[\h^\reg]\ ,\quad  f(x) \mapsto f(s_\alpha (x)) \ .
$$
For a nonzero $ \xi \in \h$, the Dunkl operator $\,\nabla_\xi(c):\c[\h^\reg] \to \c[\h^\reg ] $
is then defined using the formula\footnote{Strictly speaking, Dunkl introduced his operators
in a slightly different form that is equivalent to \eqref{dodef} up to conjugation.}
\begin{equation}
\label{dodef}
\nabla_\xi(c) := \partial_\xi + \sum_{\alpha \in R_+} c_\alpha \, \frac{(\alpha,\xi)}{(\alpha,x)}\hat{s}_\alpha\ ,
\end{equation}
where $ \alpha \mapsto c_\alpha $ is the multiplicity function introduced in Section~\ref{opcm}.
\begin{theorem}[\cite{Dun89}]\la{Thd}
The operators $\nabla_\xi$ and $\nabla_\eta$ commute for all $\xi, \eta \in \h$.
\end{theorem}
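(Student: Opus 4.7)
The plan is to prove $[\nabla_\xi, \nabla_\eta] = 0$ by direct expansion. Writing $T_\xi := \sum_{\alpha \in R_+} c_\alpha \frac{(\alpha,\xi)}{(\alpha,x)} \hat{s}_\alpha$, bilinearity gives
$$
[\nabla_\xi,\nabla_\eta] \;=\; [\partial_\xi,\partial_\eta] \;+\; \bigl([\partial_\xi,T_\eta] - [\partial_\eta,T_\xi]\bigr) \;+\; [T_\xi,T_\eta].
$$
The first term vanishes since partial derivatives commute. For the mixed term I would use the two identities $\partial_\xi \circ \hat s_\alpha = \hat s_\alpha \circ \partial_{s_\alpha \xi}$ (chain rule), which yields $[\partial_\xi,\hat s_\alpha] = -\frac{2(\alpha,\xi)}{(\alpha,\alpha)} \hat s_\alpha \partial_\alpha$, together with $\partial_\xi\bigl(1/(\alpha,x)\bigr) = -(\alpha,\xi)/(\alpha,x)^2$. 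Carrying out the expansion term by term, every contribution to $[\partial_\xi, T_\eta]$ carries the factor $(\alpha,\xi)(\alpha,\eta)$, which is manifestly symmetric in $\xi \leftrightarrow \eta$; hence $[\partial_\xi,T_\eta]=[\partial_\eta,T_\xi]$ and the mixed term cancels identically.

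The heart of the proof is thus to show $[T_\xi, T_\eta] = 0$. Using $\hat s_\alpha \cdot (\beta,x)^{-1} = (s_\alpha\beta,x)^{-1} \cdot \hat s_\alpha$, I would expand
$$
T_\xi T_\eta \;=\; \sum_{\alpha,\beta \in R_+} c_\alpha c_\beta \, \frac{(\alpha,\xi)(\beta,\eta)}{(\alpha,x)(s_\alpha\beta, x)} \, \hat s_\alpha \hat s_\beta
$$
and reorganize $[T_\xi, T_\eta]$ by grouping terms according to the element $w = s_\alpha s_\beta \in W$. The contribution from $w = 1$ (i.e.\ $\beta = \alpha$) vanishes trivially after antisymmetrization. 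For $w \ne 1$, any pair $(\alpha,\beta)$ with $s_\alpha s_\beta = w$ must have $\alpha,\beta$ lying in the two-plane $V_w \subset \h^*$ on which $w$ acts nontrivially; hence the contribution to each fixed $w$ is controlled entirely by the rank-two root subsystem $R_w := R \cap V_w$.

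The main obstacle is to prove the rank-two identity: for every dihedral subsystem $R_w$, the antisymmetrized partial-fraction sum
$$
\sum_{\substack{\alpha,\beta \in R_w\cap R_+ \\ s_\alpha s_\beta = w}} c_\alpha c_\beta \, \frac{(\alpha,\xi)(\beta,\eta) - (\alpha,\eta)(\beta,\xi)}{(\alpha,x)(s_\alpha\beta, x)}
$$
vanishes identically as a rational function on the two-plane. Here the $W$-invariance of $c$ is crucial: all roots in a dihedral $W$-orbit carry the same multiplicity, so the sum collapses to a finite combinatorial identity in each $R_w$. I would verify this dihedral identity by direct computation, separating the cases of types $I_2(m)$ (and exploiting the fact that for $m$ odd all roots are $W$-conjugate, while for $m$ even there are at most two conjugacy classes). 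Once this rank-two cancellation is established, the full statement $[T_\xi,T_\eta]=0$, and hence the theorem, follows by summing over $w$.
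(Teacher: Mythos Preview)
Your direct-computation strategy is correct and is essentially the classical route going back to Dunkl's original paper: expand the commutator, observe that the mixed $[\partial,T]$ terms are symmetric in $\xi,\eta$, and reduce the pure $[T_\xi,T_\eta]$ piece to a dihedral identity by grouping pairs $(\alpha,\beta)$ according to the rotation $w=s_\alpha s_\beta$. The paper, however, takes a quite different and more conceptual path (following \cite{DO03}): it packages the Dunkl operators into a deformed de~Rham differential $\d(k)$ on the polynomial de~Rham complex, constructs an intertwining isomorphism $S(k)$ conjugating $\d(0)$ to $\d(k)$ for generic $k$, deduces $\d(k)^2=0$ first generically and then for all $k$ by Zariski closedness, and finally reads off $[T_i,T_j]=0$ from the vanishing of $\d(k)^2$. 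What your approach buys is elementarity and transparency in the Coxeter setting---no intertwiner, no density argument---at the cost of a case-by-case rank-two verification that, while routine, is where all the work hides (and which you have not actually carried out). What the paper's approach buys is uniformity: it avoids any root-system combinatorics, works verbatim for arbitrary complex reflection groups, and yields the intertwining operator $S(k)$ as a byproduct, an object of independent interest.
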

We will sketch the proof of Theorem~\ref{Thd} (in the more general setting of complex reflection groups)
in the end of this lecture. Now, we show how the Dunkl operators can be used
to construct the commuting differential operators $ L_P $.

First, observe that the operators \eqref{dodef} transform under the action of $W$ in the same way as
the partial derivatives, i.~e. $ \hat{w} \,  \nabla_\xi = \nabla_{w(\xi)} \,\hat{w}\,$
for all $\, w \in W \,$ and $\, \xi \in \h \,$.
Hence, for any $W$-invariant polynomial $ P \in \c[\h^*]^W $, the differential-reflection operator
$ P(\nabla_\xi):\, \c[\h^\reg] \to \c[\h^\reg] $ preserves the subspace
$\, \c[\h^\reg]^W $ of $W$-invariant functions. Moreover, it is easy
to see that $ P(\nabla_\xi) $ acts on this subspace as a {\it purely differential} operator.
Indeed, using the obvious relations $\,\hat{s}_\alpha \, \partial_\xi = \partial_{s_\alpha(\xi)}
\, \hat{s}_\alpha\,$ and $ \hat{s}_\alpha \, \hat{f} = \hat{s}_\alpha(f) \, \hat{s}_\alpha \,$,
one can move all nonlocal operators (reflections) in $ P(\nabla_\xi) $ `to the right'; then
these nonlocal operators will act on invariant functions as the identity. Thus, for $ P \in \c[\h^*]^W $,
the restriction of $ P(\nabla_\xi) $ to $ \c[\h^\reg]^W $ is a $W$-invariant differential operator
which we denote by $ \mathrm{Res}^W [P(\nabla_\xi)]  \in \D(\h^\reg)^W $. If $ P(x) = |x|^2 $
is the quadratic invariant in $  \c[\h^*]^W  $, an easy calculation shows that
$$
\mathrm{Res}^W |\nabla_\xi|^2 =
\Delta_\h - \sum_{\alpha \in R_+} \frac{c_\alpha(c_{\alpha} + 1)(\alpha,\alpha)}{(\alpha,x)^2} \ ,
$$
which is exactly the generalized Calogero-Moser operator $ H_\g $ introduced in \cite{OP83}.
In general, if we set
\[
L_P := \mathrm{Res}^W [P(\nabla_\xi)] \ ,\quad \forall\,P \in \c[\h^*]^W  ,
\]
then all properties of Theorem~\ref{OPgen} are easily seen to be satisfied. In particular,
the key commutativity property $ [L_P, L_Q] = 0 $ follows from the equations
\begin{eqnarray*}
\mathrm{Res}^W[P(\nabla_\xi)] \ \mathrm{Res}^W[Q(\nabla_\xi)] &=& \mathrm{Res}^W[P(\nabla_\xi)\, Q(\nabla_\xi)] \\
&=& \mathrm{Res}^W[Q(\nabla_\xi)\, P(\nabla_\xi)] \\
&=& \mathrm{Res}^W[Q(\nabla_\xi)] \ \mathrm{Res}^W[P(\nabla_\xi)]\ ,
\end{eqnarray*}
where $ P(\nabla_\xi)\,Q(\nabla_\xi) =  Q(\nabla_\xi)\,P(\nabla_\xi) $ is a direct consequence of
Theorem~\ref{Thd}. Thus, the algebra homomorphism of Theorem~\ref{OPgen} can be defined by the rule
$$
\c[\h^*]^W \to \D(\h^\reg)^W\ ,\quad P \mapsto \mathrm{Res}^W[P(\nabla_\xi)]\ .
$$
Despite its apparent simplicity this argument looks puzzling. What kind of an algebraic structure is hidden
behind this calculation? This was one of the questions that led to the theory of rational Cherednik algebras
developed in \cite{EG02}. In the next lecture, we will introduce these algebras in greater generality:
for an arbitrary complex (i.e., not necessarily Coxeter) reflection group. The representation theory of
rational Cherednik algebras is remarkably analogous to the classical representation theory of semisimple Lie algebras,
and we will discuss this analogy in the subsequent lectures. We will also emphasize another less evident analogy
that occurs only for {\it integral} multiplicity values. It turns out that for such values, the rational Cherednik algebras are closely related to the rings of (twisted) differential operators on certain singular algebraic varieties with `good' singularities ({\it cf.} Proposition~\ref{is}).

\subsection{Finite reflection groups}
\la{frg}
The family of rational Cherednik algebras is associated to a finite reflection group. We begin by recalling the definition of such groups and basic facts from their invariant theory. The standard reference for this material is
N.~Bourbaki's book \cite{Bou68}.

Let $V$ be a finite-dimensional vector space over a field $k$. We define two distinguished classes of linear automorphisms of $ V $.
\begin{enumerate}
\item $ s \in \GL_k(V)$ is called a \emph{pseudoreflection}
if it has finite order $ > 1 $ and there is a hyperplane $ H_s \subset V$ that is pointwise fixed by $s$.
(If $s$ is diagonalizable, this is the same as saying that all but one of the eigenvalues are equal to $1$.)
\item A pseudoreflection is a \emph{reflection} if it is
diagonalizable and has order 2. (In this case, the remaining eigenvalue is equal to $-1$.)
\end{enumerate}

A \emph{finite reflection group} on $V$ is a finite subgroup $ W \subset \GL_k(V)$ generated by pseudoreflections.
Note that if $ W_1 $ is a finite reflection group on $V_1$ and $ W_2 $ is a finite reflection group on $V_2 $,
then $ W = W_1 \times W_2 $ is a finite reflection group on $ V_1 \oplus V_2 $. We say that $W$ is
indecomposable if it does not admit such a direct decomposition.

If $\,k= {\mathbb R} \,$, then all pseudoreflections are reflections. The 
classification of finite groups generated by reflections was obtained in this case by H.~S.~M.~Coxeter \cite{Cox34}. There are
4 infinite families of indecomposable Coxeter groups ($ A_n,\,B_n,\, D_n $ and $ I_2(m) $) and 6 exceptional groups
($E_6, \,E_7,\, E_8,\,F_4,\,H_3,\, H_4$). Over the complex numbers $ k = \c $, the situation is much more complicated. A complete list of indecomposable complex reflection groups was given by
G.~C.~Shephard and J.~A.~Todd in 1954: it includes 1 infinite family $ G(m,p,n) $ depending on 3 positive integer parameters (with $p$ dividing $m$), and 34 exceptional groups (see \cite{ST54}).
A.~Clark and J.~Ewing used the Shephard-Todd results to classify the groups
generated by pseudoreflections over an arbitrary field of characteristic coprime to the
group order (see \cite{CE74}).

There is a nice invariant-theoretic characterization of finite reflection groups. Namely, the invariants of
a finite group $ W \subset \GL_k(V) $ form a polynomial algebra if and only if $W$ is generated by
pseudoreflections. More precisely, we have 
\begin{theorem}
\la{sct}
Let $V$ be a finite-dimensional faithful representation of a finite group $W$ over
a field $k$. Assume that either $ {\rm char}(k) = 0$  or $ {\rm char}(k) $ is coprime to $ |W|$.
Then the following properties are equivalent:
\begin{enumerate}
\item $W$ is generated by pseudoreflections,
\item $ k[V]$ is a free module over $k[V]^W$,
\item $k[V]^W = k[f_1,\,\ldots\,,\,f_n] $ is a free polynomial algebra\footnote{The polynomials
$ f_i \in k[V] $ generating $ k[V]^W $ are not unique; however, their degrees $ d_i = \deg(f_i) $
depend only on $W$: they are called the fundamental degrees of $W$.}.
\end{enumerate}
\end{theorem}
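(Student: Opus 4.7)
My plan is to establish the cycle of implications $(1) \Rightarrow (3) \Rightarrow (2) \Rightarrow (1)$. The substantive content lies in the classical Chevalley-Shephard-Todd step $(1) \Rightarrow (3)$; once the polynomial structure of $k[V]^W$ is in hand, the remaining two implications follow from standard graded commutative-algebra inputs. The hypothesis on $\mathrm{char}(k)$ will be used throughout to define the Reynolds operator $e := |W|^{-1}\sum_{w \in W} w$, which splits the inclusion $k[V]^W \hookrightarrow k[V]$ as a $k[V]^W$-linear projection.

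For the main step $(1) \Rightarrow (3)$ I would pick a minimal homogeneous generating set $f_1,\dots,f_m$ of the algebra $k[V]^W$ and prove, by induction on total degree, the syzygy lemma: \emph{any homogeneous relation $\sum_i h_i f_i = 0$ in $k[V]$ forces $h_1 \in (f_2,\dots,f_m)\, k[V]$}. The inductive step is where the pseudoreflection hypothesis enters: for each pseudoreflection $s \in W$ with reflecting hyperplane $H_s = \{\alpha_s = 0\}$, the difference $s(h_i) - h_i$ vanishes on $H_s$ and is thus divisible by $\alpha_s$; applying $s$ to the syzygy, subtracting, and dividing through by $\alpha_s$ yields a syzygy of strictly smaller degree on which the inductive hypothesis bites. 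Iterating over a pseudoreflection generating set of $W$ shows that $h_1$ is $W$-invariant modulo $(f_2,\dots,f_m)\, k[V]$, and the base case $\deg h_1 = 0$ is closed by applying $e$ and invoking graded Nakayama together with minimality of $\{f_i\}$. Granted the lemma, algebraic independence of the $f_i$ follows formally: a nontrivial polynomial relation $P(f_1,\dots,f_m)=0$ of least total degree, differentiated along the coordinates of $V$ via the chain rule, produces a syzygy whose first coefficient contradicts the lemma. Finally $m=n$ because $k(V)$ is a finite extension of $k(V)^W$ of transcendence degree $n$ over $k$.

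The implication $(3) \Rightarrow (2)$ is a routine consequence of Cohen-Macaulayness: with $k[V]^W = k[f_1,\dots,f_n]$ polynomial, the $f_i$ form a homogeneous system of parameters in the regular ring $k[V]$, hence a regular sequence; therefore $k[V]/(f_1,\dots,f_n)$ is finite-dimensional over $k$, and any lift of a graded $k$-basis of it yields a free $k[V]^W$-basis of $k[V]$ by graded Nakayama.

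The remaining step $(2) \Rightarrow (1)$ is the most delicate part of the proof and, after the syzygy lemma, the second main obstacle. Let $W' \trianglelefteq W$ be the subgroup generated by all pseudoreflections of $W$. By the implications just proved, $k[V]^{W'}$ is polynomial and $k[V]$ is free over $k[V]^{W'}$; combined with the hypothesis and Molien's identity $\prod d_i = |W|$, this forces $k[V]^{W'}$ to be a finite free $k[V]^W$-module of rank $[W:W']$. The key remaining point is to show this index equals $1$: one observes that the induced finite cover $\Spec k[V]^{W'} \to \Spec k[V]^W$ is a quotient of $\mathbb{A}^n$ by $W/W'$, and that by construction no nontrivial element of $W\setminus W'$ fixes a hyperplane in $V$. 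A purity-of-the-branch-locus argument then shows the cover is unramified in codimension one, hence étale, hence trivial, yielding $W=W'$.
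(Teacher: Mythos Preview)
Your proposal sketches the full cycle of implications via the classical route: Chevalley's syzygy lemma for $(1)\Rightarrow(3)$, a regular-sequence argument for $(3)\Rightarrow(2)$, and a purity-of-branch-locus argument for $(2)\Rightarrow(1)$. The paper, by contrast, proves only the implication $(1)\Rightarrow(2)$, and does so by Smith's homological argument rather than by first establishing that $k[V]^W$ is polynomial. The idea is to show directly that $\tor_1^{k[V]^W}(k,\,k[V])=0$, which by the graded Nakayama lemma is equivalent to freeness. The divided-difference operator $\Delta_s:=\alpha_s^{-1}(1-s)$ is $k[V]^W$-linear of degree $-1$, so it acts \emph{functorially} on $\tor_1$ and lowers degree; hence any minimal-degree class in $\tor_1$ is killed by every $\Delta_s$ and is therefore $W$-invariant. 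One then kills the invariant part by factoring the Reynolds projection through $\tor_1^{k[V]^W}(k,\,k[V]^W)=0$. Both arguments rest on the same basic fact that $(1-s)f$ is divisible by $\alpha_s$, but you deploy it inside a degree induction on syzygies while the paper applies it as an endomorphism of a derived functor. The paper's approach is shorter for the single implication it treats and bypasses the differentiation step (with its positive-characteristic subtleties) entirely; your approach gives more, namely the full equivalence and the explicit polynomial generators. Two small points to tighten in your sketch: the conclusion of the syzygy lemma that the standard induction actually yields is $h_1\in(f_1,\dots,f_m)\,k[V]$, not $h_1\in(f_2,\dots,f_m)\,k[V]$ as you wrote; and your purity argument for $(2)\Rightarrow(1)$ tacitly uses that freeness of $k[V]$ over $k[V]^W$ forces $k[V]^W$ to be regular (faithfully flat descent of regularity), which is worth making explicit since Zariski--Nagata needs a regular base.
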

Over the complex numbers, this theorem was originally proved by Shephard and Todd \cite{ST54}
using their classification of pseudoreflection groups. Later Chevalley \cite{Che55} gave a proof in the real case without using the classification, and Serre extended Chevalley's argument to the complex case. Below, we will give a homological proof of the (most interesting) implication $ (1)\,\Rightarrow\,(2)\,$, which is due to L.~Smith \cite{Sm85}; it works in general in coprime characteristic.

First, we recall the following well-known result from commutative algebra.
\begin{lemma}\la{NL}
Let $\, A = \bigoplus_{i \ge 0}\, A_i \,$ be a non-negatively graded commutative $k$-algebra with $ A_0 = k $,
and let $M$ be a non-negatively graded $A$-module. Then the following are equivalent:
\begin{enumerate}
\item $M$ is free,
\item $M$ is projective,
\item $M$ is flat,
\item $\tor_1^A(k,M) = 0$, where $\, k = A/A_{+} \,$.
\end{enumerate}
\end{lemma}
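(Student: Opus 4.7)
The implications $(1) \Rightarrow (2) \Rightarrow (3) \Rightarrow (4)$ are standard and formal: free modules are projective, projective modules are flat, and flatness forces $\tor_1^A(k,M) = 0$. The real content is $(4) \Rightarrow (1)$, and my plan rests on the graded Nakayama lemma in the form: if $N$ is any non-negatively graded $A$-module satisfying $N = A_+ N$, then $N = 0$. This holds because, in any fixed degree $d$, an element of $N_d$ would be a sum of products of elements of $A_{>0}$ with elements of $N_{<d}$; iterating and using that $N$ is bounded below gives $N_d = 0$.

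To prove $(4) \Rightarrow (1)$, set $\bar M := k \otimes_A M = M/A_+ M$, which inherits a non-negative grading. Choose homogeneous elements $\{\bar m_i\}_{i \in I}$ forming a $k$-basis of $\bar M$, and lift them to homogeneous elements $m_i \in M$ of the same degrees $d_i$. Let
\[
F := \bigoplus_{i \in I} A(-d_i)
\]
and consider the map of graded $A$-modules $\varphi: F \to M$ sending the $i$-th generator to $m_i$.

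For surjectivity, let $C := \mathrm{coker}(\varphi)$. Since $\varphi$ becomes surjective after applying $k \otimes_A -$ (by our choice of $\{m_i\}$), we have $C = A_+ C$, so $C = 0$ by graded Nakayama. For injectivity, let $K := \ker(\varphi)$ and consider the long exact sequence arising from $0 \to K \to F \to M \to 0$:
\[
\tor_1^A(k,M) \,\longrightarrow\, k \otimes_A K \,\longrightarrow\, k \otimes_A F \,\longrightarrow\, k \otimes_A M \,\longrightarrow\, 0.
\]
By construction $k \otimes_A F \to k \otimes_A M$ is an isomorphism, and by hypothesis $\tor_1^A(k,M) = 0$, so $k \otimes_A K = 0$, i.e.\ $K = A_+ K$. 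A second application of graded Nakayama yields $K = 0$. Hence $\varphi$ is an isomorphism and $M$ is free.

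The only delicate point is ensuring that both the cokernel $C$ and the kernel $K$ inherit non-negative gradings so that graded Nakayama applies. For $C$ this is immediate. For $K$, one notes that $K \subset F$ is a graded submodule of the non-negatively graded free module $F$, so $K$ is itself non-negatively graded. Everything else is bookkeeping.
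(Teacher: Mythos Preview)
Your argument is correct and is precisely the standard homological proof. The paper does not actually supply a proof of this lemma: it merely observes that $(4)\Rightarrow(1)$ is the only nontrivial implication and cites Serre's \emph{Local Algebra} for it. Your write-up is exactly the argument one finds there, so there is nothing to compare.
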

Note that the only nontrivial implication in the above lemma is: $\,(4)\,\Rightarrow\, (1)\,$.
Its proof is standard homological algebra (see, e.g., \cite{Se00}, Lemma~3, p. 92).

\begin{proof}[Proof of Theorem~\ref{sct}, $\,(1)\,\Rightarrow\,(2)\,$]
In view of Lemma~\ref{NL}, it suffices to prove that
\[
\tor^{k[V]^W}_1(k,\,k[V]) = 0\ .
\]
Note that the non-negative grading and $W$-action on $k[V]$ induce a non-negative grading and $W$-action on
$\,\tor_1^{k[V]^W}(k,k[V])$. There are two steps in the proof. First, we show that the nonzero elements in
$\tor_1^{k[V]^W}(k,k[V])$ of minimal degree must be $W$-invariant, and then we show that
$W$-invariant elements must be $0$.

Given a pseudoreflection $s \in W$, choose a linear form $ \alpha_s \in V^* \subset k[V]$ such that
$ \Ker (\alpha_s) = H_s $, and define the following operator
\[
\Delta_s := \frac{1}{\alpha_s}(1-s) \in \End_{k[V]^W}(k[V])\ .
\]
To see that this operator is well defined on $ k[V] $, expand a polynomial $f \in k[V]$ as a Taylor series in $\alpha_s$
and note that the constant term of $(1-s)\cdot f$ is 0. To see that $ \Delta_s $ is a $k[V]^W$-linear endomorphism,
check the identity
$$
\Delta_s(fg) = \Delta_s(f) g + s(f)\Delta_s(g)\ ,\quad \forall\,f,\,g \in k[V]\ ,
$$
and note that, for $f \in k[V]^W$, this identity becomes
$\Delta_s(fg) = f\Delta_s(g)$. It is clear that $\Delta_s$ has degree $-1$. Now, since $\tor_1^{k[V]^W}(k,-)$ is a functor on the category of $ k[V]^W$-modules, we can apply it to the endomorphism $ \Delta_s \,$:
\[
(\Delta_s)_*:\ \tor_1^{k[V]^W}(k,k[V]) \to \tor_1^{k[V]^W}(k,k[V])\ .
\]
If $\,\xi \in \tor_1^{k[V]^W}(k,k[V])\,$ has minimal degree, then $ \, (\Delta_s)_*\,\xi = 0\,$ (since
$ (\Delta_s)_* $ must decrease degree by one).
This implies that $\,s(\xi) = \xi\,$, which proves the claim that the nonzero elements of $ \tor_1^{k[V]^W}(k,k[V]) $
of minimal degree are $W$-invariant.

Now, consider the averaging (Reynolds) operator\footnote{Note that this operator is well defined since the
characteristic of $k$ is coprime to $\lvert W \rvert$.}
\[
\pi = \frac 1 {\lvert W \rvert} \sum_{g \in W} g: \, k[V] \to k[V]^W .
\]
If we write $\,\iota:\,k[V]^W \into k[V]\,$ for the natural inclusion, then
$\,\iota \circ \pi(f) = f \,$ for all $\,f \in k[V]^W $.
The identity $\pi(\pi(f)g) = \pi(f)\pi(g)$ shows that $\pi$ is a map of $k[V]^W$-modules.
Applying then the functor $ \tor_1^{k[V]^W}(k,-) $ to the composition $\, \iota \circ \pi \,$, we get
$$
\tor_1^{k[V]^W}(k, k[V]) \xrightarrow{\pi_*} \tor_1^{k[V]^W}(k, k[V]^W)
\xrightarrow{\iota_*} \tor_1^{k[V]^W}(k, k[V])
$$
which is the {\it zero} map since $\, \tor_1^{k[V]^W}(k,\, k[V]^W) = 0 \,$. On the other hand,
if $\,\xi \in \tor_1^{k[V]^W}(k, k[V])^W \,$ is $W$-invariant, we have
$ \xi = (\iota \circ \pi)_*\,\xi = (\iota_* \circ \pi_*)\,\xi = 0 $. Hence
$ \tor_1^{k[V]^W}(k, k[V])^W = 0 $ finishing the proof.
\end{proof}

\subsection{Dunkl operators}
\la{Sect4}
From now on, we assume that $ k = \c $. We fix a finite-dimensional complex vector space $V$
and a finite reflection group $W$ acting on $V$ and introduce notation for the following objects:

\begin{enumerate}
\item $ \A$ is the set of hyperplanes fixed by the generating reflections of $W$,
\item $W_H$ is the (pointwise) stabilizer of a reflection hyperplane $H \in \A$; it is a cyclic subgroup of $W$
    of order $ n_H \ge 2 $,
\item $(\,\cdot\,,\,\cdot\,):\, V\times V \to \c $ is a positive definite Hermitian form on $V$, linear in the second factor and antilinear in the first,
\item $x^* := (x,\,\cdot\,) \in V^*$ which gives an antilinear isomorphism $ V \to V^* $,
\item for $H \in \A$, fix $v_H \in V$ with $(v_H,x) = 0$ for all $x \in H$, and $\alpha_H \in V^*$ such that $H = \Ker(\alpha_H)$,
\item $\delta := \prod_{H \in \A} \alpha_H \in \c[V]$ and $\delta^* := \prod_{H \in \A} v_H \in \c[V^*]$,
\item $ \e_{H,i} := \frac 1 {n_H} \sum_{w \in W_H}\det_V(w)^{-i}\, w \in \c W_H \subset \c W$, $\,i=0,1,\ldots, n_H-1\,$.
\end{enumerate}
Some explanations are in order. The elements $ e_{H,i} $ are idempotents which are generalizations of the primitive idempotents $ (1-s)/2 $ and $(1+s)/2$ for a real reflection $s$. The orders $ n_H = |W_H| $ depend only on the orbit of $H$ in $\A$. Under the action of $W$, $\delta^*$ transforms as the determinant character $ \det_V:\,\GL(V) \to \c $ and $\delta $ transforms as the inverse determinant. Finally, to simplify notation for sums, we introduce the convention of conflating $W$-invariant functions on $\A$ with functions on $\A/W$, and we write indices cyclicly identifying the index sets
$\{0,1,\ldots, n_H-1\}$ with $ \Z/n_H \Z $. 

The rational Cherednik algebras associated to $W$ will depend 
on complex parameters
$\,k_C := \{k_{C,i}\}_{i=0}^{n_C-1}$, where $ C $ runs over 
the set of orbits $\A/W$; these parameters play the role of multiplicities $ c_\alpha $ in the complex case.
We will assume that $k_{C,0} = 0 $ for all $ C \in \A/W$. 
We will need to extend the ring of differential operators $ \D = \D(V_\reg) $ on $ V_\reg := V\setminus \left(\cup_{H \in \A} H\right)$ by allowing group-valued coefficients. The group $W$ acts naturally on $\D$, so we simply let $\D W := \D \rtimes W $ be the crossed product\footnote{As a reminder, if $W$ is a finite group acting on an algebra $A$ by algebra automorphisms, then the crossed product $A \rtimes W$ is defined to be the vector space $ A \otimes \c W$ with multiplication $\, (a \otimes w) \cdot (b \otimes v) = a \,w(b) \otimes w v\,$.}. Then, for each
$\xi \in V$, we define the operator $T_\xi(k) \in \D W$ by
\[
T_\xi(k) := \partial_\xi - \sum_{H \in A}\frac{\alpha_H(\xi)}{\alpha_H} \sum_{i = 0}^{n_H-1} n_H k_{H,i} e_{H,i}
\]
It is an exercise in decoding the notation to check that in the case when $W$ is a Coxeter group, the 
operator $ T_\xi(k) $ agrees with the operator defined in \eqref{dodef}, up to conjugation by the function 
$\,\prod_{H \in \A} \alpha_H^{k_H} $.

The following result was originally proved by Dunkl \cite{Dun89}
in the Coxeter case and in \cite{DO03} in general. We end this lecture by outlining the proof given in \cite{DO03}. 
\begin{lemma} 
\la{ducom}
The operators $T_\xi(k)$ satisfy the following properties:
\begin{enumerate}
\item $[T_\xi(k),T_\eta(k)]=0$, i.e. the operators commute
\item $w T_\xi(k) = T_{w(\xi)}(k) w$
\end{enumerate}
\end{lemma}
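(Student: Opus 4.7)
I would handle the two parts separately: part (2) is essentially a symmetry bookkeeping that I would dispatch first, and part (1) is the substantive commutativity that I would attack by a rank-two reduction.

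For (2), the ingredients I would assemble are: the $W$-action permutes the arrangement $\A$ by $H \mapsto wH$; the stabilizers satisfy $W_{wH}=w W_H w^{-1}$ and, since $\det_V$ is a character, $w\,e_{H,i}\,w^{-1} = e_{wH,i}$; the linear form $\alpha_{wH}$ is proportional to $\alpha_H \circ w^{-1}$, so the scalar cancels in the ratio, giving $w\cdot \tfrac{\alpha_H(\xi)}{\alpha_H} \cdot w^{-1} = \tfrac{\alpha_{wH}(w\xi)}{\alpha_{wH}}$; the multiplicities $k_{H,i}$ are constant on $W$-orbits, so $k_{wH,i}=k_{H,i}$; and $w\,\partial_\xi\,w^{-1} = \partial_{w\xi}$. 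Substituting into the definition of $T_\xi(k)$ and reindexing the sum over $\A$ by $H'=wH$ yields $w\,T_\xi(k)\,w^{-1} = T_{w\xi}(k)$, which is (2).

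For (1), I would write $T_\xi = \partial_\xi - D_\xi$ with $D_\xi := \sum_{H \in \A} \tfrac{\alpha_H(\xi)}{\alpha_H}\,\varepsilon_H$ and $\varepsilon_H := \sum_{i=0}^{n_H-1} n_H k_{H,i}\,e_{H,i} \in \c W_H$. Using $[\partial_\xi,\partial_\eta]=0$, the commutator splits as
\[
[T_\xi,T_\eta] \;=\; -[\partial_\xi,D_\eta] + [\partial_\eta,D_\xi] + [D_\xi,D_\eta].
\]
The plan for the mixed terms is to apply the Leibniz rule together with the straightening relation $\partial_\xi\cdot u = u\cdot \partial_{u^{-1}\xi}$ for $u\in W_H$. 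Differentiating $\alpha_H(\eta)/\alpha_H$ by $\partial_\xi$ produces the rational function $-\alpha_H(\eta)\alpha_H(\xi)/\alpha_H^2$, while commuting $\partial_\xi$ past each reflection in $\varepsilon_H$ produces a defect in the $v_H$-direction that is proportional to $\alpha_H$, cancelling one factor in the denominator. After antisymmetrizing in $\xi,\eta$, the purely differential part disappears, so $-[\partial_\xi,D_\eta]+[\partial_\eta,D_\xi]$ collapses to a zero-order element of $\D W$ supported along the arrangement.

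The main obstacle is the double sum $[D_\xi,D_\eta]$, which contributes terms of the form $\tfrac{\alpha_H(\xi)\alpha_{H'}(\eta)-\alpha_H(\eta)\alpha_{H'}(\xi)}{\alpha_H\alpha_{H'}}\,[\varepsilon_H,\varepsilon_{H'}]$, together with single-sum corrections coming from pushing reflections through rational functions (where I would use the $W_H$-covariance from (2)). My plan is to localize these contributions by codimension-two flats $L = H \cap H'$: all terms involving a given $L$ depend only on the rank-two parabolic subgroup $W_L\subset W$ stabilizing $L$ and on the hyperplanes through $L$. This reduces commutativity to the rank-two case, where I would either invoke the Shephard-Todd classification of rank-two complex reflection groups, or more uniformly project to the two-dimensional slice $V/L$ and use the partial fraction identity for $1/(\alpha_H\alpha_{H'})$ together with the explicit cyclic-group idempotents $e_{H,i}$ to verify the required identity. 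Once the rank-two identity is established, summing over codimension-two flats gives $[T_\xi(k),T_\eta(k)]=0$.
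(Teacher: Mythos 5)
Your proposal is correct in outline but takes a genuinely different route from the paper. The notes follow Dunkl--Opdam \cite{DO03}: they encode the operators $T_\xi(k)$ in a deformed de Rham differential $\d(k)=\d+\Omega(k)$ on the Koszul complex $K^\bullet=\c[V]\otimes\Lambda^\bullet V^*$, pair it with the Koszul contraction $\partial$ to get a deformed Euler element $E(k)=\partial\d(k)+\d(k)\partial$, and construct an intertwiner $S(k)$ with $\d(k)S(k)=S(k)\d(0)$ whenever $-c_\tau(k)\notin\N$ for all irreducible $\tau$; this gives $\d(k)^2=0$ on a dense open set of parameters, hence everywhere by a Zariski-closure argument, and $\d(k)^2f=\sum_{i<j}(T_iT_j-T_jT_i)f\otimes\d x_i\wedge\d x_j$ then yields commutativity. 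You instead compute $[T_\xi,T_\eta]$ directly: your part (2) is exactly right and is essentially what the paper asserts without proof; for part (1) your observation that the mixed terms have coefficients symmetric in $\xi,\eta$ (both the derivative of $\alpha_H(\eta)/\alpha_H$ and the first-order defect from straightening $\partial_\xi$ past $W_H$) so that they cancel upon antisymmetrization, and that the remaining zero-order expression can be grouped by codimension-two flats $L=H\cap H'$ and checked on rank-two parabolics, is the classical strategy (it is close in spirit to the ``different proof'' the paper attributes to \cite{EG02}, Section~4, and to Dunkl's original argument in the Coxeter case). The trade-off: your route is elementary and self-contained but concentrates all the real difficulty in the rank-two verification, which you only sketch --- for general complex reflection groups the partial-fraction/residue computation with the idempotents $\e_{H,i}$ is genuinely laborious (or else requires appealing to the Shephard--Todd classification), and you should be aware that this step is where a careful write-up would need to do substantial work. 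The paper's intertwiner approach avoids any case analysis and, as a bonus, produces the operator $S(k)$, which is a useful tool in its own right (it underlies the shift and KZ constructions later in the notes), at the cost of the representation-theoretic input on the central element $z(k)$ and the density argument in the parameter $k$.
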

Note that one consequence of this lemma is that the linear map $ \xi \mapsto T_\xi(k)$ yields an algebra embedding $ \c[V^*] \hookrightarrow \D W$.

We think of the operators $ T_\xi(k) $ as deformations of the directional derivatives $ \partial_\xi $. One strategy for proving commutativity is to find the equations $\partial_\xi \partial_\eta - \partial_\eta \partial_\xi = 0 $ ``in nature,'' and then see if the situation in which they appear can be deformed. A natural place these equations appear is in the de Rham differential: $\d^2 f = \sum_{i<j}(\partial_i \partial_j - \partial_j\partial_i) f \d x_i \wedge \d x_j$. In this case, commutativity of the partial derivatives is equivalent to the fact that the de Rham differential squares to zero.

We now proceed to deform the de Rham differential, with the eventual goal of showing that the deformed map {\it is} actually a differential. It turns out that in addition to deforming the de Rham differential, it is also convenient to deform the Euler derivation. First, for 
each $H \in \A $, define
\[
a_H(k) := \sum_{i=0}^{n_H-1} n_H k_{H,i} e_{H,i} \in \C W_H
\]
Let $K^\bullet = \C[V] \otimes \Lambda^\bullet V^*$ be the polynomial de Rham complex on $V$, and define $\Omega(k) \in \End(\C[V])\otimes K^1$ by
\[
\Omega(k) := \sum_{H \in A}a_H(k) \alpha_H^{-1} \d \alpha_\H
\]
The term $\alpha_H^{-1}\d \alpha_H$ is the logarithmic differential of $\alpha_H$, and $\Omega(k)$ is $W$-equivariant with respect to the diagonal action on $K^1$. Next we define $\d(k):\C[V] \to K^1$ via
\[
\d (k)(p) := \d p + \Omega(k)(p)
\]
(where $\d(0)$ is the standard de Rham differential). We extend it to $\d(k):K^\bullet \to K^{\bullet + 1}$ in the usual way - if $p \otimes \omega \in \C[V]\otimes \Lambda^\bullet V^*$, then $\d (k)(p\otimes \omega) := d(k)(p)\wedge \omega$.
Next, we define the standard Koszul differential $\partial:K^l \to K^{l - 1}$ using
\[
p \otimes \d x_1 \wedge \cdots \wedge \d x_l \mapsto \sum_{r=1}^l (-1)^{r+1} x_r p\otimes \d x_1 \wedge \cdots \wedge \d \hat{x}_r \wedge \cdots \wedge \d x_l
\]
Finally, we define the (deformed) Euler vector field
\[
E(k) := E(0) + \sum_{H \in A} a_H (k)
\]
where $E(0)$ is the usual Euler derivation (i.e. the infinitesimal generator of the diagonal $\C^\times$ action on $K^\bullet$). The space $K^\bullet$ has a bigrading given by $K^\bullet = \oplus_m\oplus_{l=0}^{\dim V} \C[V]_m \otimes \Lambda^l V^*$, and if we let $K_m^l$ be one of the graded pieces and let $\omega \in K^l_m$, then
\[
E(0)\omega = (l+m)\omega,\quad \d (k)\omega \in K^{l+1}_{m-1},\quad \partial \omega \in K^{l-1}_{m+1}
\]
The deformations $E(k)$ and $\d (k)$ are compatible in the following sense:
\[
E(k) = \partial \d (k) + \d (k) \partial
\]

The following important lemma is used to determine conditions on $k$ which are sufficient to show $E(k)$ has a small kernel.
\begin{lemma}[\cite{DO03}, Lemma 2.5]
Let $z(k) = \sum_{H \in A} a_H(k) \in \C W$. Then
\begin{enumerate}
\item The element $z(k)$ is central in $\C W$.
\item For an irreducible representation $\tau$ of $\C W$, let $c_\tau(k)$ denote the (unique) eigenvalue for $z(k)$ on $\tau$. Then $c_\tau(k)$ is a linear function of $k$ with non-negative integer coefficients.
\end{enumerate}
\end{lemma}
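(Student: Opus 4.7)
The plan is to handle parts (1) and (2) separately. Part (1) is a direct conjugation check; part (2) combines a trace identity with a classical algebraic-integer argument, the integrality assertion being the main obstacle.

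For part (1), I would verify that $w z(k) w^{-1} = z(k)$ for every $w \in W$. Since $W$ permutes $\A$, we have $w W_H w^{-1} = W_{w(H)}$, and because conjugation preserves determinants, $w e_{H,i} w^{-1} = e_{w(H),i}$. Combined with the $W$-invariance of the multiplicities ($k_{H,i} = k_{w(H),i}$), this yields $w a_H(k) w^{-1} = a_{w(H)}(k)$; summing over $H \in \A$ gives the centrality of $z(k)$.

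For part (2), I would decompose $z(k) = \sum_{C,i} k_{C,i}\, z_{C,i}$ with $z_{C,i} := n_C \sum_{H \in C} e_{H,i}$, each of which is central by the same reasoning. By Schur's lemma, $z_{C,i}$ acts on the irreducible $\tau$ by a scalar $c_{\tau, C, i}$, and $c_\tau(k) = \sum c_{\tau, C, i}\, k_{C,i}$ is visibly linear in $k$. To compute the coefficients I would take the trace:
\[
c_{\tau, C, i} \cdot \dim\tau \,=\, \mathrm{tr}(z_{C,i}|_\tau) \,=\, n_C \sum_{H \in C} \mathrm{tr}(e_{H,i}|_\tau) \,=\, n_C \, |C| \, m_{C,i}(\tau),
\]
where $m_{C,i}(\tau) := \mathrm{tr}(e_{H_0,i}|_\tau)$ for any fixed $H_0 \in C$ is the multiplicity of the character $\det^i$ of the cyclic group $W_{H_0}$ inside $\tau|_{W_{H_0}}$ (using that $e_{H_0,i}$ is the primitive idempotent in $\C W_{H_0}$ cutting out the corresponding isotypic component). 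The individual summands coincide along the orbit $C$ because $e_{w(H),i}$ and $e_{H,i}$ are conjugate in $\C W$. In particular $m_{C,i}(\tau) \in \Z_{\ge 0}$, so $c_{\tau, C, i} = n_C|C|\, m_{C,i}(\tau)/\dim\tau$ is a non-negative rational number.

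The main obstacle is the integrality of $c_{\tau, C, i}$. Rewriting $z_{C,i} = \sum_{H \in C} \sum_{w \in W_H} \det(w)^{-i} w$, its coefficients are $n_C$-th roots of unity, so $z_{C,i}$ lies in the subring $\Z[\zeta_{n_C}] W \subset \C W$. This subring is finitely generated as a $\Z$-module, so its center is integral over $\Z$; in particular $z_{C,i}$ is integral over $\Z$, and therefore so is any scalar by which it acts on a representation. Thus $c_{\tau, C, i}$ is simultaneously rational and an algebraic integer, hence an ordinary integer, which combined with the non-negativity above completes the proof.
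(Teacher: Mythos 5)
The paper itself gives no proof of this lemma --- it is stated as a quotation from \cite{DO03} --- so there is nothing internal to compare against; judged on its own terms, your argument is correct and complete. Part (1) is the right computation: $wW_Hw^{-1}=W_{w(H)}$ and conjugation-invariance of $\det_V$ give $w\,\e_{H,i}\,w^{-1}=\e_{w(H),i}$, and the $W$-invariance of the multiplicities then yields $wz(k)w^{-1}=z(k)$. For part (2), the decomposition $z(k)=\sum_{C,i}k_{C,i}z_{C,i}$ into central elements, Schur's lemma, and the trace identity $c_{\tau,C,i}=n_C\,|C|\,m_{C,i}(\tau)/\dim\tau$ --- where $m_{C,i}(\tau)\in\Z_{\ge 0}$ is the multiplicity of $\det^i$ in $\tau|_{W_{H_0}}$, computed as the trace of the idempotent $\e_{H_0,i}$ and constant along the orbit by conjugacy --- correctly establish linearity and non-negative rationality of the coefficients. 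You rightly single out integrality as the one nontrivial point, and your resolution is sound: $z_{C,i}$ lies in the group ring of $W$ over $\Z[\zeta]$ ($\zeta$ a primitive $n_C$-th root of unity), which is finitely generated as a $\Z$-module, so $z_{C,i}$ is integral over $\Z$, the scalar by which it acts on $\tau$ is an algebraic integer, and a rational algebraic integer is an integer. This is exactly the classical class-sum argument from character theory adapted to the present situation. One minor simplification: you do not need to invoke the center of $\Z[\zeta]W$; \emph{every} element of a ring that is finitely generated as a $\Z$-module is integral over $\Z$, so the integrality of $z_{C,i}$ is immediate.
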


The following theorem is the main step in the proof of commutativity of the Dunkl operators.
\begin{theorem}[\cite{DO03}, Theorem~2.9]
Assume that $k$ is a parameter such that $-c_\tau(k) \not\in\N$ for all irreducible $\tau$. Then there exists a unique $W$-invariant linear isomorphism $S(k):K^\bullet \to K^\bullet$ satisfying the properties
\begin{enumerate}
\item $S(k)(K_m^l) \subset K_m^l$,
\item The restriction of $S(k)$ to $K_0^0$ is the identity,
\item $S(k)(p\otimes \omega) = (S(k)(p))\otimes \omega$,
\item $\d(k)S(k) = S(k)d(0)$.
\end{enumerate}
\end{theorem}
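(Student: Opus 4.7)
My plan is to construct $S(k)$ inductively on the polynomial degree $m$, using the homotopy identity $E(k) = \partial\,\d(k) + \d(k)\,\partial$ and the decomposition $E(k) = E(0) + z(k)$ from the preceding lemma, where $z(k) = \sum_{H\in\A} a_H(k)$ is central in $\C W$. Condition~(3) reduces $S(k)$ to its graded restrictions $T_m := S(k)|_{\C[V]_m}\colon \C[V]_m \to \C[V]_m$, and on $K^0_m$ the intertwining relation~(4) reads
\[
\d(k)\,T_m(p) \;=\; S_{m-1}\bigl(\d(0)\,p\bigr),\qquad p \in \C[V]_m,
\]
where $S_{m-1}$ denotes the truncation already constructed. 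Condition~(2) forces the base case $T_0 = \id$, and~(3) extends this to the identity on all of $K^\bullet_0$.

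For the inductive step, on both $K^0_m$ and $K^1_{m-1}$ the undeformed Euler operator acts as the scalar $m$, so $E(k) = m + z(k)$ on each piece; the hypothesis $-c_\tau(k)\notin\N$ ensures the eigenvalues $m + c_\tau(k)$ are nonzero on every $W$-isotypic component for $m \ge 1$, so $E(k)$ is invertible. I then set
\[
T_m(p) \;:=\; (m+z(k))^{-1}\,\partial\,S_{m-1}\bigl(\d(0)\,p\bigr)
\]
and verify~(4) as follows. Since $\d(k)$ is $W$-equivariant and $z(k)$ is central, $\d(k)$ commutes with $(m+z(k))^{-1}$; combined with $\d(k)\,\partial = E(k) - \partial\,\d(k)$ and $E(k)|_{K^1_{m-1}} = m+z(k)$, this gives
\[
\d(k)\,T_m(p) \;=\; S_{m-1}(\d(0)p) \;-\; (m+z(k))^{-1}\,\partial\,\d(k)\,S_{m-1}(\d(0)p).
\]
The correction term vanishes, because the inductive hypothesis $\d(k)\,S_{m-1} = S_{m-2}\,\d(0)$, applied to $\d(0)p \in K^1_{m-1}$, together with $\d(0)^2=0$, yields $\d(k)\,S_{m-1}(\d(0)p) = S_{m-2}(\d(0)^2 p) = 0$. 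Properties~(1) and~(3) are built into the definition.

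For uniqueness, I will run the same bidegree induction on $T := S(k) - S'(k)$: at the smallest $m$ with $T|_{\C[V]_m}\neq 0$, minimality gives $T(\d(0)p) = 0$, hence $\d(k)T(p) = 0$; applying $\partial$ and using $\partial T(p) = 0$ on $K^0$ yields $(m+z(k))T(p)=0$, which forces $T(p) = 0$ by invertibility of $E(k)$. $W$-equivariance of $S(k)$ then follows from uniqueness, since $wS(k)w^{-1}$ also solves the same system of conditions. Finally, $S(k)$ is an isomorphism by a parallel induction: if $T_{m-1}$ is invertible then so is $S_{m-1}$ on $K^1_{m-1}$, whence $T_m(p) = 0$ would imply $S_{m-1}(\d(0)p) = \d(k)T_m(p) = 0$, forcing $\d(0)p = 0$ and hence $p = 0$ for $m\ge 1$; injectivity on finite-dimensional $\C[V]_m$ suffices.

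The delicate feature of the argument is that one must \emph{not} assume $\d(k)^2 = 0$ in the induction --- that identity is essentially equivalent to the commutativity of the Dunkl operators $T_\xi(k)$, which is the very statement the construction of $S(k)$ will be used to prove. The induction nonetheless closes because the only quadratic closure it requires is $\d(0)^2 = 0$, which enters on the undeformed side through the inductive hypothesis.
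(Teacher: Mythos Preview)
Your proposal is correct and follows exactly the approach the paper sketches: induction on the polynomial degree $m$, using the homotopy identity $E(k)=\partial\,\d(k)+\d(k)\,\partial$ together with the invertibility of $E(k)=m+z(k)$ on $K^l_m$ for $(l,m)\neq(0,0)$, which is precisely what the hypothesis $-c_\tau(k)\notin\N$ guarantees. Your explicit formula $T_m(p)=(m+z(k))^{-1}\partial\,S_{m-1}(\d(0)p)$ and the verification via $\d(k)\,\partial=E(k)-\partial\,\d(k)$ fill in the ``linear algebra'' the paper alludes to, and your observation that only $\d(0)^2=0$ (not $\d(k)^2=0$) is needed is exactly the point that makes the argument non-circular.
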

The \emph{intertwining operator} $S(k)$ is constructed by induction on $m$ and $l$. It is not too difficult to show that the enumerated conditions imply that $S(k)$ is unique and invertible. The main point is to show the existence of $S(k)$. The proof of existence is essentially linear algebra using the fact that the assumption on the parameter $k$ implies (via the previous lemma) that the kernel of $E(k)$ is exactly $K^0_0$.

\begin{cor}
The map $\d(k)$ is a differential on $K^\bullet$.
\end{cor}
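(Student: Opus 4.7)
My plan is to deduce $\d(k)^2 = 0$ from the fact that $\d(0)^2 = 0$ (the classical de Rham differential squares to zero) by transporting this identity across the intertwining operator $S(k)$, and then to extend the conclusion from generic $k$ to all $k$ by a polynomial dependence argument.

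First, assume the parameter $k$ satisfies the genericity condition $-c_\tau(k) \notin \N$ for every irreducible representation $\tau$ of $\c W$, so that the preceding theorem provides a $W$-invariant linear isomorphism $S(k):K^\bullet \to K^\bullet$ with $\d(k)\, S(k) = S(k)\, \d(0)$. Then I would compute
\[
\d(k)^2 \, S(k) \;=\; \d(k)\, S(k)\, \d(0) \;=\; S(k)\, \d(0)^2 \;=\; 0,
\]
and since $S(k)$ is invertible, conclude $\d(k)^2 = 0$ on all of $K^\bullet$ for such $k$.

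Second, to remove the genericity assumption, I would observe that for a fixed element $\omega \in K^\bullet$, the element $\d(k)^2(\omega) \in K^\bullet$ depends polynomially on the parameters $\{k_{H,i}\}$: indeed, $\Omega(k)$ is linear in $k$, so $\d(k) = \d(0) + \Omega(k)\wedge(\,\cdot\,)$ depends linearly, and $\d(k)^2$ depends at most quadratically. The genericity set excluded in the first step is the complement of a countable union of affine hyperplanes in the parameter space (the linear equations $c_\tau(k) = -m$ for $\tau \in \mathrm{Irr}(W)$, $m \in \N$), hence is Zariski dense. A polynomial on the parameter space that vanishes on a Zariski dense subset vanishes identically, so $\d(k)^2(\omega) = 0$ for all parameters $k$ and all $\omega$.

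The only nontrivial point in this argument is the existence of the intertwiner $S(k)$ for generic $k$, but that has already been established in the preceding theorem, so the corollary is essentially a formal consequence. The main conceptual step is simply recognizing that $S(k)$ conjugates $\d(k)$ to $\d(0)$ and that squaring respects this conjugation; the density argument handles the finitely (or countably) many exceptional parameters where $S(k)$ need not exist.
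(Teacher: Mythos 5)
Your proposal is correct and follows essentially the same route as the paper: for generic $k$ one conjugates $\d(0)^2=0$ through the intertwiner $S(k)$, and then the identity $\d(k)^2=0$, being a polynomial (closed) condition in $k$ that holds on a dense set of parameters, extends to all $k$. Your write-up just makes explicit the conjugation computation and the Zariski-density of the set of non-singular parameters, which the paper leaves as a one-line remark.
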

\begin{proof}
If we assume $-c_\tau(k)\not\in \N$ for all irreducible $\tau$, then this corollary follows immediately from the existence of the intertwining operator (and the fact that $\d(0)$ is a differential). However, the condition $\d^2(k)=0$ is a closed condition (either in Zariski or classical topology), so it must hold on a closed subset of the space of all parameters $k$. Since the theorem holds on an open dense set in this space, it implies the corollary for all parameter values.
\end{proof}
As mentioned above, the commutativity of Dunkl operators
(Lemma~\ref{ducom}) follows directly from this corollary. Indeed, let $e_i \in V$ and $x_i \in V^*$ be dual bases, and write $T_i = T_{e_i}(k)$. A straightforward computation shows that for all $f \in \c[V] $,
\[
\d(k)^2 f=\sum_{i<j}(T_iT_j-T_jT_i)f\otimes \d x_i\wedge \d x_j\ .
\]
A different proof of commutativity can be found in \cite{EG02}, Section~4.
\begin{remark}
At first glance, the definition of the Dunkl operators may seem to be quite general. Indeed, at least in the real case, the formulas
defining $ T_\xi(k) $ make sense for an \emph{arbitrary} finite hyperplane arrangement $\A$ (with prescribed multiplicities). However, the operators $ T_\xi $ thus defined will commute if and only if the hyperplane arrangement $ \A $ comes from a finite reflection group ({\it cf.} \cite{Ve94}).
\end{remark}

\section{Lecture 2}
In this lecture, we introduce the rational Cherednik algebras
and discuss their basic properties. 
We also define category $\O$, which is a category of modules over the Cherednik algebra subject to certain finiteness restrictions. This is a close analogue of the eponymous category of $\g$-modules defined by J.~Bernstein, I.~Gelfand, and S.~Gelfand for a semi-simple Lie algebra $\g$ (see \cite{Hum08}).

\subsection{Rational Cherednik algebras} In this section, we will use the notation introduced in Section~\ref{Sect4}.
We begin with the main definition.
\begin{defi}[{\it cf.} \cite{DO03}]
The \emph{rational Cherednik algebra} $H_k(W)$ is the subalgebra of $\D W$ generated by $\c[V]$, $\c[V^*]$ and $\c W$. (The subalgebras $\c[V]$ and $\c W$ are embedded in $ \D W $ in the natural way and are independent of $k$. On the other hand, the embedding of $\c[V^*]$ in $ \D W $ is defined via the Dunkl operators $T_\xi(k)$ which certainly depend on $k$.)
\end{defi}

It is also possible to give an `abstract' definition of Cherednik algebras in terms of generators and relations. 
From this point of view, the previous definition is called the {\it Dunkl representation}.
The key point is that the Dunkl representation is faithful. 
The algebra $ H_k(W) $ is generated by the elements of $V$, $V^*$ and $ W$ subject to the following relations
\begin{eqnarray*}
&&[x,\,x'] =0\ ,\quad [\xi,\,\xi']=0 \ ,\quad
w\,x\,w^{-1} =w(x)\ ,\quad w\,\xi\,w^{-1}=w(\xi)\ ,\nonumber \\*[1ex]
&& [\xi,\,x] = \langle \xi,\,x \rangle + \sum_{H\in \A}\frac{\langle \alpha_H, \xi\rangle\,
\langle x, v_H\rangle}{\langle \alpha_H, v_H\rangle}\sum_{i=0}^{n_H-1}n_H(k_{H,i}-k_{H,i+1})\,e_{H,i}\ .
\end{eqnarray*}
where $x,x' \in V^*$ and $\xi,\xi' \in V$ and $w \in W$.
\begin{example}
In the case $ W = \Z_2 $, the above relations are actually
very simple. Specifically, $H_k(\Z_2)$ is generated by $x$, $\xi$ 
and $s$ satisfying
\[
s^2=1\ , \quad s x=- x s\ , \quad s \xi=-\xi s\ , \quad 
[\xi,x]= 1- 2ks\ .
\]
The Dunkl operator corresponding to $\xi$ is given by 
$\frac d {dx} - \frac k x (1-s)$ which acts naturally on $V_\reg = \c[x,x^{-1}]$, 
preserving $ \c[x] \subset \c[x, x^{-1}]$.
\end{example}

\subsection{Basic properties of $H_k(W)$}
First, note that if $k=0$, then $H_0 = \D(V)\rtimes W \subseteq \D(V_\reg)\rtimes W$, so we can view the Cherednik algebra as a deformation of the crossed product of a Weyl algebra $\D(V)$ with the group $W$.
The next theorem collects several key properties of $H_k(W)$.
\begin{theorem}[see \cite{EG02}]
\label{keyprop} 
Let $ H_k = H_k(W) $ be the family of Cherednik algebras associated to a complex reflection group $W$.
\begin{enumerate}
\item\la{universality} {\rm Universality:} $\{H_k\}$ is the universal deformation of $H_0$.
\item {\rm PBW property:} the linear map $\c[V]\otimes \c W\otimes \c[V^*] \to H_k$ induced by multiplication in $H_k$ is a $\c[V]$-module isomorphism.
\item Let $H_\reg = H_k[\delta^{-1}]$ denote the localization of $ H_k $ at the Ore subset 
$\{\delta^k\}_{k \in \mathbb N}$. Then the induced map $ H_\reg \to \D W $ is an isomorphism of algebras.
\end{enumerate}
\end{theorem}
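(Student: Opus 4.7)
The plan is to handle the three parts in the logical order (2), (3), (1), since PBW is the foundation and universality relies on it. Throughout, the main input is the faithfulness of the Dunkl representation, which is built into the definition of $H_k$ as a subalgebra of $\D W$.

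For the PBW property, I would first prove surjectivity of the multiplication map by a normal-form argument from the defining relations: the group relations $wxw^{-1} = w(x)$ and $w\xi w^{-1} = w(\xi)$ let us move all $w \in W$ past any monomial in $V \cup V^*$, while the commutator identity for $[\xi,x]$ lets us move every $\xi \in V$ past every $x \in V^*$ at the cost of terms of lower total degree in $V \cup V^*$. By induction, every word in the generators can be rewritten as a sum of pure tensors $p \otimes w \otimes q$ with $p \in \c[V]$, $w \in \c W$, $q \in \c[V^*]$. For injectivity I would filter $\D W$ by order of differential operator: since $T_\xi(k) = \partial_\xi + (\text{order zero})$, the image of $p \otimes w \otimes \xi_{i_1}\cdots \xi_{i_r}$ in $\D W$ has principal symbol $p\cdot w \cdot \partial_{\xi_{i_1}}\cdots \partial_{\xi_{i_r}}$, and these symbols are linearly independent in $\mathrm{gr}\,\D W = \c[V_\reg][V^*]\rtimes W$. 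Thus the map is injective on the associated graded, and hence injective. This step, identifying $\mathrm{gr}\,H_k \cong \c[V]\otimes \c W \otimes \c[V^*]$, is the hard part—everything else is essentially formal.

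The localization statement (3) drops out of PBW together with direct manipulation of the Dunkl formula. Surjectivity: the inclusions $\c[V_\reg] \subset H_\reg$ and $\c W \subset H_\reg$ are immediate, and the relation
\[
\partial_\xi \;=\; T_\xi(k) + \sum_{H \in \A} \frac{\alpha_H(\xi)}{\alpha_H} \sum_{i=0}^{n_H-1} n_H k_{H,i}\, e_{H,i}
\]
shows that after inverting $\delta = \prod \alpha_H$ all partial derivatives lie in $H_\reg$; since $\c[V_\reg],\,W,\,\partial_\xi$ generate $\D W$, the map is onto. For injectivity, PBW gives $H_k \cong \c[V]\otimes \c W\otimes \c[V^*]$ as a $\c[V]$-module, so localization yields $H_\reg \cong \c[V_\reg]\otimes \c W\otimes \c[V^*]$. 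Under the map to $\D W$ this matches the standard decomposition $\D W = \c[V_\reg]\otimes \c W \otimes \c[\partial]$ term by term modulo the order filtration, so injectivity transfers from injectivity of the associated graded map (which is clear).

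Finally, universality (1) reduces to a Hochschild cohomology computation combined with flatness. PBW implies that $\{H_k\}$ is a flat family of deformations of $H_0 = \D(V)\rtimes W$ over the affine space of parameters $k = \{k_{C,i}\}$ (with $k_{C,0}=0$). To show it is the universal deformation, I would identify the space of isomorphism classes of infinitesimal deformations of $H_0$ with $HH^2(H_0,H_0)$, compute this group using the decomposition of Hochschild cohomology of a smash product into contributions indexed by conjugacy classes of $W$ acting on $V$, and observe that only conjugacy classes of pseudoreflections contribute—each class $C$ of pseudoreflection of order $n_C$ yielding $n_C - 1$ independent deformation directions indexed by its nontrivial eigenvalues. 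This matches the parameter space exactly, and a Kodaira–Spencer map check shows the natural map from parameter space to $HH^2(H_0)$ is an isomorphism, so the family is universal. The main obstacle here is the cohomology calculation, which I would simply quote from \cite{EG02}; conceptually the argument is clean once PBW is in hand.
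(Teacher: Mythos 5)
The paper itself does not prove this theorem: it states it with the citation to \cite{EG02} and only appends remarks on why each part matters, so there is no in-text argument to compare against. Your outline is, in substance, the standard proof from \cite{EG02} (and \cite{DO03}): straightening via the relations for the spanning statement, the order filtration on $\D W$ and linear independence of principal symbols for injectivity, inversion of $\delta$ to recover $\partial_\xi$ for the localization, and the identification of the parameter space with $HH^2(\D(V)\rtimes W)$ (computed conjugacy class by conjugacy class, with only pseudoreflection classes contributing) for universality. Your logical ordering (2), (3), (1) is the right one, and your observation that the associated-graded step is where the content lies matches the paper's own remark that the ``lower bound'' on the size of $H_k$ is the nontrivial half of PBW.

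Two points you pass over that carry real weight. First, since the paper defines $H_k$ as the subalgebra of $\D W$ generated by $\c[V]$, $\c W$ and the $T_\xi(k)$, the straightening argument is only available once one verifies that the commutator $[T_\xi(k),x]$, computed inside $\D W$, is a zeroth-order operator lying in $\c\oplus\c W$ (equivalently, that the Dunkl operators actually satisfy the displayed presentation); this is the one genuine computation in part (2) and should at least be flagged. Second, in part (3) you need $\{\delta^m\}$ to be an Ore set in $H_k$ before $H_k[\delta^{-1}]$ makes sense; this follows because $\ad(\delta)$ is locally nilpotent for the differential filtration, but it is not automatic. Neither omission is fatal for a sketch, and quoting the Hochschild cohomology computation for (1) is consistent with what the paper itself does.
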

\remark Each statement deserves a comment:
\begin{enumerate}
\item In general, universal deformations can rarely be realized algebraically, so the family of 
Cherednik algebras is somewhat exceptional.

\item The name `PBW property' comes from the Poincare-Birkhoff-Witt Theorem in Lie theory, which has a very similar statement. It is a fundamental property for many reasons. In particular, it is not obvious {\it a priori} that the generators and relations listed above give a nonzero algebra, so in some sense the important part of this statement is the ``lower bound'' on the size of $H_k$. (The ``upper bound'' is easy to see from the relations.)
\item This justifies the notation $H_\reg := H_k[\delta^{-1}]$, since it shows that the localization is independent of the parameters $k$.
\end{enumerate}

There are two filtrations on $\D W$ which are commonly used. The first is the \emph{standard filtration}, where $\deg x = 1 = \deg\xi$ and $\deg w = 0$. The second is the \emph{differential filtration}, where $\deg x = 0 = \deg w$, and $\deg \xi = 1$. Through the Dunkl embedding $H_k \into \D W$, these filtrations induce filtrations on $H_k$ (with the same names), and for both filtrations we have $\grd (H_k) \cong \c[V \oplus V^*] \rtimes W$.

\subsection{The spherical subalgebra}
Each $H_k$ contains a distinguished (nonunital)
subalgebra $U_k = U_k(W)$ called the \emph{spherical subalgebra}. We set $\e := \frac 1 {\lvert W \rvert} \sum_{w \in W} w \in \D W$ and define
\[
U_k := \e H_k \e
\]
(The additive and multiplicative structure of $\e H_k \e$ are induced from $H_k$, but the unit of $\e H_k \e$ is $\e$, not $1\in H_k$.) 

The spherical subalgebra $U_k$ is closely related to $H_k$, however the exact relationship depends crucially on values 
of the parameter $k$. For each $k$, there is an algebra isomorphism $ U_k \cong \End_{H_k}(\e H_k)$, and although $ \e H_k $ is a f.~g. projective module over $H_k$, this does not imply that $U_k$ and $H_k$ are Morita equivalent\footnote{We recall that two rings are \emph{Morita equivalent} if their categories of left (or right) modules are equivalent. We refer to \cite{MR01}, Section~3.5, for basic Morita theory.}. The problem is that for certain special values of $k$, the module $ \e H_k $ is not a generator in the category of right $H_k$-modules. Such special values are called {\it singular}, 
and it is an interesting (and still open) question to precisely determine these values of $k$ for a given 
$W$. For \emph{generic} $k$, one can prove that $ H_k $ is a simple algebra (see Theorem~\ref{simp} below), so
in that case, the algebras $ H_k $ and $ U_k $ are Morita equivalent.

For $k=0$, we know that $H_0 = \D(V)\rtimes W$. Since $\D(V)$ is isomorphic to a Weyl algebra, it is simple, which implies that $\D(V)\rtimes W$ is simple. This implies, in turn, that $ H_0 $ is Morita equivalent to $ U_0$.
We also remark that $U_0 = \e (\D(V) \rtimes W)\e \cong \D(V)^W $ via the identification $\e d \e \leftrightarrow d $. 
This allows a theorem analogous to Theorem \ref{keyprop}(\ref{universality}):
\begin{theorem}
The family $\{U_k\}$ is a universal deformation of $\D(V)^W$. Also, we have $\grd (U_k) \cong \c[V \oplus V^*]^W$.
\end{theorem}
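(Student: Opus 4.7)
The plan is to deduce both statements from the corresponding properties of $H_k$ in Theorem~\ref{keyprop}, using the idempotent $\e$ as the bridge.

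For the associated graded isomorphism, the key observation is that the standard and differential filtrations on $H_k$ are induced from those on $\D W$, are $W$-stable, and $\e \in \c W$ lies in filtration degree zero; hence the corner functor $A \mapsto \e A \e$ commutes with $\grd$. Applying Theorem~\ref{keyprop}(2) and the elementary identification $\e(A \rtimes W)\e \cong A^W$ for a $W$-algebra $A$ (here with $A = \c[V\oplus V^*]$) gives
\[
\grd(U_k) \,=\, \e\, \grd(H_k)\, \e \,\cong\, \e\bigl(\c[V \oplus V^*]\rtimes W\bigr)\e \,\cong\, \c[V\oplus V^*]^W.
\]

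For the universality claim, the strategy is to transport the universal deformation property of $\{H_k\}$ across the Morita equivalence between $H_0 = \D(V)\rtimes W$ and $U_0 = \D(V)^W$. Since $\D(V)$ is simple, $\e$ is a full idempotent in $H_0$, so the $(H_0,U_0)$-bimodule $H_0 \e$ is a progenerator with $\End_{U_0}(H_0\e)^{\mathrm{op}} \cong H_0$. Given any flat deformation $\tilde U$ of $U_0$ over a local Artinian base $R$, standard obstruction theory produces a lift of the projective right $U_0$-module $H_0 \e$ to a projective right $\tilde U$-module $\tilde P$; set $\tilde H := \End_{\tilde U}(\tilde P)^{\mathrm{op}}$. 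Then $\tilde H$ is $R$-flat, reduces to $H_0$ modulo the maximal ideal of $R$, and satisfies $\e \tilde H \e \cong \tilde U$. By Theorem~\ref{keyprop}(\ref{universality}) there is a unique classifying map from $\Spec R$ to the parameter space pulling $\{H_k\}$ back to $\tilde H$; applying $\e(-)\e$ then classifies $\tilde U$ in terms of $\{U_k\}$, and uniqueness transfers as well.

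The main obstacle will be justifying that the Morita bridge behaves well \emph{in families}: one has to check that the lift $\tilde P$ exists and is unique up to the appropriate equivalence, that $\tilde H$ is genuinely flat over $R$, and that the whole construction is natural in the Artinian base. Conceptually, this rests on the Morita-invariance of Hochschild cohomology $HH^\bullet(H_0) \cong HH^\bullet(U_0)$ (whose $HH^2$ controls first-order deformations and $HH^3$ the obstructions), together with the isomorphism of Kodaira--Spencer maps inherited from the universality of $\{H_k\}$; once this is in place, no new calculation is needed.
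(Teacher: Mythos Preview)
The paper does not include a proof of this theorem; it is stated without argument in these expository lecture notes, by analogy with Theorem~\ref{keyprop} for $H_k$ (the underlying reference being \cite{EG02}). So there is no paper proof to compare against, and your proposal should be assessed on its own.

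Your argument for $\grd(U_k) \cong \c[V\oplus V^*]^W$ is the standard one and is correct: since $\e$ lies in filtration degree zero and is idempotent, one has $F^n(U_k) = \e\, F^n(H_k)\, \e$ for either filtration, hence $\grd(U_k) = \e\,\grd(H_k)\,\e$, and the rest follows from the PBW isomorphism for $H_k$ together with the elementary identification $\e(A\rtimes W)\e \cong A^W$.

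Your strategy for universality --- transport across the Morita equivalence between $H_0$ and $U_0$ --- is also the natural one and is what underlies the result in the literature. The conceptual remark you make at the end (Morita invariance of Hochschild cohomology, hence of the deformation functor) is really the heart of the matter and is cleaner than the explicit lifting-of-projectives construction you sketch first. One point to make precise: to recover $\tilde U$ from $\tilde H$ as a corner algebra you need an idempotent in $\tilde H$ lifting $\e \in H_0$; this exists by standard idempotent lifting over nilpotent extensions, and with it the assignments $\tilde H \mapsto \tilde\e\,\tilde H\,\tilde\e$ and $\tilde U \mapsto \End_{\tilde U}(\tilde P)^{\mathrm{op}}$ become mutually inverse equivalences of deformation functors over Artinian bases. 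Once that is established, universality of $\{H_k\}$ transfers to $\{U_k\}$ exactly as you say. Your outline is correct, with the caveats you yourself flag about making the family-level naturality and flatness arguments rigorous.
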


In general, the algebra map $ H_k \into \D W$ restricts to a map $U_k = \e H_k \e \to \e (\D W)\e$ which is an injective homomorphism of unital algebras. Heckman's restriction operation now becomes
\begin{equation}\label{HC}
\Res_k^W:\ U_k = \e H_k \e \hookrightarrow \e (\D W )\e \stackrel{\sim}{\to} \D(V_\reg)^W
\end{equation}
where the second map is an isomorphism given by $\e D \e \mapsto D$. We call $\Res_k^W:U_k \hookrightarrow \D(V_\reg)^W$ the \emph{spherical Dunkl representation} of $U_k$. Note that $\Res_k^W$ is a deformation of the canonical embedding $\D(V)^W \hookrightarrow \D(V_\reg)^W$ in the same way as the Dunkl embedding $H_k \hookrightarrow \D W$ is a deformation of the canonical map $ \D(V) \rtimes W \into \D W$.

\subsection{Category $\O$}
In this section we discuss a subcategory of $H_k$-modules that shares many properties with the Berstein-Gelfand-Gelfand category $\O$ in Lie theory. (The BGG category $\O$ is a subcategory of representations of the universal enveloping algebra $\U(\g)$ which are subject to certain finiteness conditions. A good exposition of this theory can be found
in \cite{Hum08}.) Some analogies between $\U(\g)$ and $H_k(W) $ are listed in Table \ref{analogies}.

\begin{table}
\begin{tabular}{|p{5cm}|p{5cm}|}
\hline
$\U(\g) = \U(\n_-)\otimes \U(\h)\otimes \U(\n_+)$ & $H_k(W) = \c[V] \otimes \c W\otimes \c[V^*]$\\
\hline
Weights for $\U(\g)$ are irreducible $\U(\h)$-modules, i.e. $\mu \in \h^*$ & `Weights' for $H_k$ are irreducible $W$-modules\\
\hline
central character of $M$ & the parameter $k$ \\
\hline
the BGG category $\O$&category $\O'_k$\\
\hline
blocks $\O_\chi$ of $\O$ & blocks $\O_k(\bar\lambda)$\\
\hline
\end{tabular}
\caption{Analogies between $\U(\g)$ and $H_k$}\label{analogies}
\end{table}

\begin{defi} We introduce the following subcategories of the category $ \mod(H_k)$ of finitely generated $H_k$-modules:
\begin{eqnarray*}
\O'_k &:=& \{M \in \mathrm{mod}(H_k) \mid \dim_\c (\c[V^*]\cdot m) < \infty\}\\
\O_k(\bar\lambda) &:=& \{M \in \O'_k \mid (P-\bar\lambda(P))^N \cdot m = 0, \; N  \gg 0\}\\
\O_k &:=& \{M \in \O'_k \mid \xi^N\cdot m = 0,\; N \gg 0\} = \O_k(0)
\end{eqnarray*}
\noindent
In these definitions, $ \bar \lambda \in V^*/ W$, (i.e. $\bar \lambda:\c[V]^W \to \c$), and the conditions 
are to hold for arbitrary elements $ m $, $P$, and $\xi$ (where $m \in M$ and $P \in \c[V^*]^W$ and $\xi \in V$):
\end{defi}

From now on, we mainly discuss $\O_k$, which is called the \emph{principal block} of category $\O'_k$. The following lemma is standard and closely mirrors the Lie situation.
\begin{lemma} The objects of $\O'_k$ and $\O_k$ have the following properties:
\begin{enumerate}
\item Each $M \in \O'_k$ (resp. $M \in \O_k$) is finitely generated over $\c[V] \subset H_k$.
\item $\O'_k$ (resp. $\O_k$) is a stable Serre subcategory of $\mathrm{mod}(H_k)$ (i.e. is it an abelian subcategory closed under taking subobjects, quotients, and extensions).
\item $\O'_k$ (resp. $\O_k$) is Artinian.
\end{enumerate}
\end{lemma}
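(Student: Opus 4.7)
For part~(1), I would apply the PBW decomposition $H_k \cong \c[V] \otimes \c W \otimes \c[V^*]$ from Theorem~\ref{keyprop}(2). If $m_1, \ldots, m_r$ generate $M$ over $H_k$, each $\c[V^*]\cdot m_i$ is finite-dimensional by hypothesis, so $F := \sum_i \c W\cdot\c[V^*]\cdot m_i$ is a finite-dimensional subspace of $M$ (using $|W|<\infty$), and PBW gives $M = \c[V]\cdot F$. The same argument applies verbatim to $\O_k$.

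For part~(2), I would first observe that $H_k$ is two-sided Noetherian: the associated graded $\grd H_k \cong \c[V\oplus V^*]\rtimes W$ is finite over the Noetherian commutative ring $\c[V\oplus V^*]^W$, hence Noetherian, and this lifts to $H_k$. Consequently finitely generated $H_k$-modules have finitely generated submodules, and subobjects, quotients, and extensions all remain in $\mathrm{mod}(H_k)$. Closure of $\O'_k$ under subobjects and quotients is immediate since $\c[V^*]\cdot m$ only shrinks under those operations. For an extension $0\to M' \to M \to M'' \to 0$ with $M',M'' \in \O'_k$ and $m \in M$ projecting to $\bar m \in M''$, I would let $I := \mathrm{Ann}_{\c[V^*]}(\bar m)$, which has finite codimension, pick Noetherian generators $P_1,\ldots,P_s$ of $I$, and observe that $P_j m \in M'$ forces each $\c[V^*]\cdot P_j m$ to be finite-dimensional; hence $\c[V^*]\cdot m = (\c[V^*]/I)\cdot m + I\cdot m$ is finite-dimensional. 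The argument for $\O_k$ is identical after replacing ``finite codimension'' by ``contains a power of the augmentation ideal of $\c[V^*]$.''

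For part~(3), my plan is to mimic the weight-space arguments of the BGG category $\O$. Using PBW and the defining relations, one constructs an \emph{Euler element} $\eu \in H_k$ with $[\eu, x] = x$ and $[\eu, y] = -y$ for all $x \in V^*$, $y \in V$. The key claim is that $\eu$ acts locally finitely on any $M \in \O'_k$ with finite-dimensional generalized eigenspaces, the real parts of its eigenvalues being bounded below on each block $\O_k(\bar\lambda)$ (and only finitely many such blocks support a given finitely generated object). The cleanest route is to first establish that every object of $\O_k$ surjects from a finite direct sum of standard modules $M_k(\tau) := H_k \otimes_{\c W \otimes \c[V^*]} \tau$ for $\tau \in \W$, with $V \subset \c[V^*]$ acting on $\tau$ as zero. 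This follows by choosing $\c W$-generators of the finite-dimensional quotient $M / V\cdot M$; then PBW makes each $M_k(\tau)$ free over $\c[V]$ with basis $\tau$ concentrated in a single $\eu$-degree, so that its $\eu$-eigenspaces are manifestly finite-dimensional. The DCC then follows by a standard argument: the minimal-eigenvalue piece of any descending chain stabilizes by finite-dimensionality, and induction upward in $\eu$-eigenvalue controls the rest. The main obstacle is constructing the Euler element and establishing the surjection from standard modules; once these are in hand the remainder is formal.
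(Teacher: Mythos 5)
The paper itself offers no proof of this lemma (it is stated as ``standard'' with the reader referred to \cite{DO03} and \cite{GGOR03}), so your argument has to be judged against the standard one. Your parts (1) and (2) are correct and are essentially that standard argument: the PBW ordering $\c[V]\otimes\c W\otimes\c[V^*]$ gives finite generation over $\c[V]$ at once, and the Noetherian/annihilator argument for closure under extensions (with the augmentation-ideal variant for $\O_k$) is exactly right.

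Part (3), however, has genuine gaps. The first is the claim that every $M\in\O_k$ receives a surjection from a finite direct sum of standard modules ``by choosing $\c W$-generators of the finite-dimensional quotient $M/V\cdot M$.'' That quotient can vanish for $M\neq 0$: already for $k=0$ and $M=M(\triv)=\c[V]$, the $\xi\in V$ act as the $\partial_\xi$ and $\sum_\xi \partial_\xi\c[V]=\c[V]$, so $M/V\cdot M=0$ and there is nothing to lift. What is true, and suffices, is weaker: $F:=\c W\cdot\c[V^*]\cdot\{m_1,\dots,m_r\}$ is a finite-dimensional $\c[V^*]\rtimes W$-submodule on which $V$ acts nilpotently, so $M$ is a quotient of $H_k\otimes_{\c[V^*]\rtimes W}F$, an object \emph{filtered} by standard modules (not in general a direct sum of them); this already gives that $\eu$ acts locally finitely on $M$ with finite-dimensional generalized eigenspaces and spectrum in $\bigcup_{\tau}(h_\tau+\Z_{\geq 0})$, where $h_\tau$ is the scalar by which $\eu$ acts on $1\otimes\tau\subset M(\tau)$. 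The second and more serious gap is the deduction of DCC: ``the minimal-eigenvalue piece stabilizes and induction upward controls the rest'' is a non sequitur, because there are infinitely many eigenvalues and a chain whose trace on each finite-dimensional eigenspace stabilizes need not stabilize (compare $\mathfrak{m}\supsetneq\mathfrak{m}^2\supsetneq\cdots$ in a graded polynomial ring). The missing ingredient is that every nonzero $N\in\O_k$ contains a nonzero singular vector $\{m: Vm=0\}$, hence receives a nonzero map from some $M(\tau)$ and so has $N_{h_\tau}\neq 0$ for one of the \emph{finitely many} scalars $h_\tau$; since generalized $\eu$-eigenspaces are exact, any strictly decreasing chain in $M$ then has length at most $\sum_{\tau\in\W}\dim M_{h_\tau}<\infty$, which gives finite length and in particular DCC. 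Finally, note that your key claim is asserted for all of $\O'_k$, but $\eu$ does \emph{not} act locally finitely on objects of the non-principal blocks $\O_k(\bar\lambda)$, $\bar\lambda\neq 0$ (already for $W=\Z_2$ and $k=0$ the module $\c[x]e^{\lambda x}$ is a counterexample); the Artinian property for $\O'_k$ requires reducing to the blocks and treating them separately, so as written your argument only addresses $\O_k$.
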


Just as in the Lie case, the most important objects in $\O_k$ are obtained by inducing modules from 
subalgebras of $H_k$. In particular, fix an irreducible representation $\tau$ of $W$, and give it a $\c[V^*]$ module structure using $P\cdot x = P(0)x$ for $P \in \c[V^*]$ and $x \in \tau$. Since this action is $W$-invariant, it gives $\tau$ the structure of a $\c[V^*]\rtimes W$-module. We then define the \emph{standard module} associated to $ \tau $ by
\[
M(\tau) := \mathrm{Ind}_{\c[V^*]\rtimes W}^{H_k}(\tau) = H_k \bigotimes_{\c[V^*]\rtimes W}\tau
\]
These standard modules are analogues of Verma modules in Lie theory.
(Of course, we can also induce other modules of $\c[V^*]\rtimes W$, but since we are working with the principal block $\O_k$, this definition is sufficient for our purposes.) The PBW property for $H_k$ shows that $M(\tau) \cong \c[V] \otimes \tau$ as a $\c[V]$-module. Also, the relations in $H_k$ make it clear that $M(\tau) \in \O_k$.
\begin{theorem}
As in the Lie case, the following properties hold.
\begin{enumerate}
\item The set $\{M(\tau)\}_{\tau \in \mathrm{Irr}(W)}$ is a complete list of pairwise non-isomorphic indecomposable objects in $\O_k$.
\item Each $M(\tau)$ has a \emph{unique} simple quotient $L(\tau)$, and the $L(\tau)$ are a complete list of simple objects in $\O_k$
\item The Jordan-H\"older property: Each $M \in \O_k $ has a finite filtration whose associated graded module is isomorphic to a sum of the $L(\tau)$ and is independent of the filtration.
\item $\O_k$ is a highest weight category {\rm (}in the sense of \cite{CPS88}\rm{)}.
\end{enumerate}
\end{theorem}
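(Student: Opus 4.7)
The plan is to mimic the classical BGG arguments, using the PBW decomposition $H_k = \c[V] \otimes \c W \otimes \c[V^*]$ as a substitute for the triangular decomposition of $\U(\g)$, and an Euler element of $H_k$ as a substitute for the Cartan subalgebra. Concretely, I would build an element $\eu \in H_k$ out of dual bases $\{x_i\} \subset V^*$ and $\{\xi_i\} \subset V$ (together with the central element $z(k)$ from the earlier lemma) whose adjoint action gives the standard filtration its grading: $[\eu, x] = x$ and $[\eu, \xi] = -\xi$ for $x \in V^*$, $\xi \in V$, while $\eu$ commutes with $W$. Under the PBW isomorphism $M(\tau) \cong \c[V]\otimes \tau$, the element $\eu$ acts on $\c[V]_n \otimes \tau$ by a scalar of the form $n + c_\tau(k)$ depending only on $\tau$ and $k$.

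With this grading in hand, any proper submodule of $M(\tau)$ must avoid the bottom piece $\c[V]_0 \otimes \tau = \tau$, because $\tau$ generates $M(\tau)$ over $\c[V]$. Hence the sum of all proper submodules is itself proper, yielding the unique maximal submodule and the unique simple quotient $L(\tau)$, which contains $\tau$ as its degree-zero $W$-component. Since different irreducibles $\tau \in \W$ are non-isomorphic as $W$-modules, the $L(\tau)$ are pairwise non-isomorphic, and likewise the $M(\tau)$ are pairwise non-isomorphic; the $M(\tau)$ are indecomposable because the bottom $W$-component $\tau$ is irreducible and any decomposition would split it. This gives (1) and the existence half of (2).

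For the completeness of the list of simples in (2), the hypothesis defining $\mathcal{O}_k$ forces $V$ to act locally nilpotently on any $M \in \mathcal{O}_k$, so $M^V := \{m \in M : \xi \cdot m = 0 \text{ for all } \xi \in V\}$ is nonzero. Since $W$ preserves $V$, $M^V$ is a $\c W$-module and contains an irreducible $W$-submodule $\tau$; Frobenius reciprocity for $\mathrm{Ind}_{\c[V^*]\rtimes W}^{H_k}$ then produces a nonzero map $M(\tau) \to M$, which is surjective when $M$ is simple, identifying $M$ with $L(\tau)$. I expect this local-nilpotence step together with the control of $M^V$ to be the main technical obstacle: one must leverage the finite generation of $M$ together with the defining condition of $\mathcal{O}_k$ to guarantee $M^V \ne 0$, and then use that the induction $\mathrm{Ind}_{\c[V^*]\rtimes W}^{H_k}(\tau)$ is genuinely $M(\tau)$ (which is where PBW enters).

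For (3), the earlier lemma gives that $\mathcal{O}_k$ is Artinian, while each object is Noetherian (being finitely generated over the Noetherian ring $\c[V] \subset H_k$); hence every object has finite length, the composition factors are among $\{L(\tau)\}$ by (2), and the usual Schreier refinement argument yields independence from the chosen filtration. Finally, for (4), I would put a partial order on $\W$ by declaring $\sigma < \tau$ iff $c_\sigma(k) - c_\tau(k) \in \Z_{>0}$ (so that $L(\sigma)$ can occur in $M(\tau)$ only when $\sigma \leq \tau$, by the Euler grading), declare $\{M(\tau)\}$ the standard objects, and verify the Cline-Parshall-Scott axioms: the main nontrivial input is the existence of a standard filtration on the projective cover $P(\tau)$ of $L(\tau)$, which I would obtain by constructing $P(\tau)$ as an $H_k$-direct summand of an induced module $\mathrm{Ind}_{\c[V^*]\rtimes W}^{H_k}(\c[V^*]/(V^*)^N \otimes \tau)$ for large $N$ and filtering it via the PBW decomposition, reading off the associated graded as a direct sum of the required standards.
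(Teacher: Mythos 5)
Your proposal is correct and follows essentially the same route as the proofs the paper defers to (it cites \cite{DO03}, Prop.~2.27 and \cite{GGOR03}, Prop.~2.11 and Cor.~2.16 rather than arguing in the text): grading by the Euler element, lowest-weight-space arguments for the unique simple quotient and the classification of simples, Artinian-plus-Noetherian for Jordan--H\"older, and the induced modules $\mathrm{Ind}_{\c[V^*]\rtimes W}^{H_k}\bigl(\c[V^*]/(V^*)^N \otimes \tau\bigr)$ with their standard filtrations for the highest-weight structure. The only point left implicit is the Ext-vanishing needed to pass a standard filtration to the direct summand $P(\tau)$, which is exactly how \cite{GGOR03} completes the argument.
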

For the proof of this theorem, we refer to \cite{DO03}, Proposition~2.27, and \cite{GGOR03}, Proposition~2.11
and Corollary~2.16. 

\section{Lecture 3}
The category $\O$ defined at the end of the last lecture is related to the category of finite-dimensional 
representations of the \emph{Iwahori-Hecke algebra} associated to $W$. The relation is given by a certain
additive functor (called the {\it KZ functor}) which plays a fundamental role in representation theory of $ H_k$.
In this lecture, we will introduce this functor and discuss some of its applications. In particular,
we define certain operations (\emph{KZ twists}) on the set of isomorphism classes of irreducible $W$-modules 
depending on the (integral) parameter $k$ and discuss interesting relations between these operations
(conjectured by E.~Opdam \cite{Opd95, Opd00} and proved in \cite{BC11}).

If $A$ is an algebra, we write $\Mod(A)$ (respectively, $\mod(A)$) for the category of left (respectively, finitely generated left) $A$-modules.

\subsection{The Knizhnik-Zamolodchikov (KZ) functor}
The KZ functor is defined as a composition of several functors, and we describe each one in turn.
The key property that allows this construction is the well-known fact that a $\D_X $-module on a smooth 
algebraic variety $X$ which is coherent as an $\O_X$-module is the same thing as a 
vector bundle with a flat connection on $X$. 

The Dunkl embedding provides a natural functor $\Mod(H_k) \to \Mod(\D W)$ given by $M \mapsto \D W \otimes_{H_k} M$. We denote the output of this functor by $M_\reg$. Next, we note that $\Mod(\D W)$ is naturally equivalent to the category of $W$-equivariant $D$-modules on $V_\reg$. Since $W$ acts freely on $V_\reg$, this gives an equivalence $\Mod(\D W) \cong \Mod(\D(V_\reg/W))$. The category $\Mod(\D(V_\reg/W))$ contains a full subcategory of $\O$-coherent $\D$-modules (which are automatically $\O$-locally-free). There is an interpretation functor $\Mod_\O(\D(V_\reg/W)) \cong \mathrm{Vect}^f(V_\reg/W)$ which interprets an $\O$-locally-free $\D$-module as a vector bundle with a flat connection. Finally, the Riemann-Hilbert correspondence gives an equivalence of categories $\mathrm{Vect}^f(V_\reg/W) \cong \mod(B_W)$, where $B_W := \pi_1(V_\reg/W,\, *)$ is the Artin braid group.
\begin{defi}
The $ \KZ$ \emph{functor} is defined by the composition of functors
\[
\KZ_k:\O_k \to \mod(\D W) \cong \mod(\D(V_\reg/W)) \cong \mathrm{Vect}^f (V_\reg/W) \cong \mod(\c B_W)
\]
\end{defi}

For the standard modules $M(\tau)$ this composition of functors can be made quite explicit. Since $M(\tau) \cong \c[V]\otimes \tau$ as a $\c[V]$-module, it is free of rank $\dim_\c(\tau)$, and its localization $M(\tau)_\reg$ is isomorphic to $\c[V_\reg]\otimes \tau$ as a $\c[V_\reg]$-module. This identification allows one to interpret $M(\tau)_\reg$ as (sections of) a trivial vector bundle of rank $\dim_\c \tau$. The $\D$-module structure on $M(\tau)_\reg$ is given by
\[
\partial_\xi(f \otimes v) = (\partial_\xi f)\otimes v + f\otimes(\partial_\xi v)
\]
By definition, in $M$ we have $\xi\cdot v = 0$, and since $\xi \mapsto T_\xi$ under the Dunkl embedding, we know $T_\xi(v) = 0$. Rewriting this, we see
\begin{equation}\label{kzconnection2}
\partial_\xi v - \sum_{H \in A}\frac{\alpha_H(\xi)}{\alpha_H}\sum_{i=0}^{n_H-1} n_H k_{H,i} \e_{H,i}(v) = 0
\end{equation}
Combining these formulas, we obtain
\begin{equation}\label{kzconnection}
\partial_\xi(f\otimes v) = \partial_\xi(f)\otimes v + \sum_{H \in A}\frac{\alpha_H(\xi)}{\alpha_H}\sum_{i=0}^{n_H-1} n_H k_{H,i} f \otimes \e_{H,i}(v)
\end{equation}
The right hand side is an explicit formula for the \emph{KZ connection}, which is the regular flat connection on $M_\reg = \c[V]\otimes \tau$ given by the KZ functor. (Note the change in sign between the formula for the connection and for the Dunkl operator.) Horizontal sections $y: V_\reg \to \tau$ satisfy the \emph{KZ equations}
\begin{equation}\label{kzequation}
\partial_\xi (y) + \sum_{H \in A} \frac{\alpha_H(\xi)}{\alpha_H}\sum_{i=0}^{n_H-1} n_H k_{H,i} \e_{H,i}(y) = 0
\end{equation}

\remark The formulas (\ref{kzconnection2}) and (\ref{kzequation}) look very similar (other than the sign), but there is an important distinction. In the KZ connection (\ref{kzconnection2}) group elements act on the arguments of the functions involved, while in the KZ equation (\ref{kzequation}) they act on their values.

\subsection{The Hecke algebra of $W$}
It is natural to ask what the image of the $\KZ$ functor is. The
answer is that for generic $k$, its image is the subcategory of $\c B_W$-modules that factor through a natural quotient of $B_W$ called the {\it Hecke algebra} of $W$. We first record two facts about complex reflection 
groups the proofs of which can be found in \cite{BMR98}:
\begin{enumerate}
\item For all $H \in \A$, there is a unique $s_H \in W_H$ such that $\det(s_H) = e^{2\pi i / n_H}$.
\item The braid group $B_W$ is generated by the elements $\sigma_H$ which are monodromy operators (around the $H \in \A$) corresponding to the $s_H$.

\end{enumerate}
Following \cite{BMR98}, we define the Hecke algebra of $W$ by
\[
\H_k(W) := \c B_W \bigg/ \left(\prod_{j=0}^{n_H-1} (\sigma_H - (\det s_H)^{-j}e^{2\pi i k_{H,j}})=0\right)_{H \in \A}
\]
If $k_{H,i} \in \Z$, then the relations simplify to $\sigma_H^{n_H} = 1$, which shows that there is a canonical isomorphism $\H_k(W) \cong \c W$. In particular, $\dim_\c \H_k(W) = \lvert W \rvert$. Furthermore, if $k$ is generic, then $\H_k(W)$ is semi-simple, and by rigidity of semisimple algebras
, $\H_k(W)$ is generically isomorphic to $\c W$. However, it seems to be the case that the equality $\dim_\c \H_k(W) = \lvert W \rvert$ is still conjectural for a few exceptional $W$. (The equality is known for all Coxeter groups and for all but finitely many $W$.)

\begin{theorem}[see \cite{BEG03}, \cite{GGOR03}, \cite{DO03}]
For each $k$, the functor $ \KZ_k:\O_k \to \mod(B_W)$ is an exact functor with image contained in $\mod (\H_k) \hookrightarrow \mod(B_W)$ (where the inclusion is induced by restriction of scalars).
\end{theorem}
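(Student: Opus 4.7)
The plan is to establish exactness and the Hecke factorization by analyzing $\KZ_k$ through its four constituent functors, and then to reduce the Hecke relation to a local monodromy calculation on the standard modules $M(\tau)$.

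For exactness I would verify exactness of each factor in the composition defining $\KZ_k$. The localization $M \mapsto M_\reg = H_\reg \otimes_{H_k} M$ is exact because it is a flat Ore localization of $H_k$, as asserted in Theorem~\ref{keyprop}(3). The descent equivalence $\mod(\D W) \cong \mod(\D(V_\reg/W))$ is an equivalence of abelian categories (since $W$ acts freely on $V_\reg$) and hence exact. For the passage to vector bundles with flat connections, I first need to check that $M_\reg$ is $\O$-coherent whenever $M \in \O_k$: the preceding Lemma shows every object of $\O_k$ is finitely generated over $\c[V]$, so $M_\reg$ is finitely generated over $\c[V_\reg]$, and a coherent $\D$-module on a smooth variety is automatically locally free. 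Finally, the Riemann--Hilbert correspondence is an equivalence (hence exact) on the category of regular flat connections, provided one verifies the regularity of singularities (treated below).

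For the Hecke factorization, exactness of $\KZ_k$, the Jordan--H\"older property of $\O_k$, and the closure of $\mod(\H_k)$ inside $\mod(\c B_W)$ under subquotients together reduce the problem to checking that $\KZ_k(M(\tau))$ lies in $\mod(\H_k)$ for every $\tau \in \mathrm{Irr}(W)$. On the standard modules the connection is given explicitly by \eqref{kzconnection}: near a generic point of a hyperplane $H$ it takes the logarithmic form $\d - \frac{\d\alpha_H}{\alpha_H}\, a_H(k) + (\textrm{regular})$, which establishes regular singularity along $H$ and identifies the residue with $-a_H(k) = -\sum_{i} n_H k_{H,i}\,\e_{H,i}$. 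Locally near a generic point of $H$, the quotient map $V_\reg \to V_\reg/W$ is an $n_H$-fold cyclic cover with deck group $W_H = \langle s_H \rangle$, so a generator $\sigma_H$ of local monodromy downstairs lifts to $1/n_H$ of a loop upstairs composed with the action of $s_H$. Computing on the $\e_{H,i}$-component---where $a_H(k)$ acts by $n_H k_{H,i}$ and $s_H$ acts by the scalar $(\det s_H)^{i}$---the eigenvalues of $\sigma_H$ come out (up to the standard sign conventions) to be of the form $(\det s_H)^{-i}\, e^{2\pi i k_{H,i}}$. Hence $\sigma_H$ satisfies $\prod_{j=0}^{n_H-1}(\sigma_H - (\det s_H)^{-j} e^{2\pi i k_{H,j}}) = 0$, which is precisely the defining relation of $\H_k(W)$.

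The main obstacle is the bookkeeping in the monodromy calculation: one must carefully reconcile the residue $a_H(k)$, the $1/n_H$-factor arising from the branched structure of $V_\reg \to V_\reg/W$ near $H$, and the twist by the deck transformation $s_H$ acting on the $W_H$-isotypic pieces through the idempotents $\e_{H,i}$. A secondary technical point is to verify globally (not merely at a generic point of each component) that the KZ connection has regular singularities along the divisor $\bigcup_{H \in \A} H$, which follows from Deligne's criterion applied to its manifestly logarithmic form; this is what legitimizes the use of the Riemann--Hilbert equivalence in the exactness argument.
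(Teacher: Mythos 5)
A preliminary remark: the paper does not actually prove this theorem --- it is stated with citations to \cite{BEG03}, \cite{GGOR03}, \cite{DO03} --- so your proposal can only be measured against the standard argument in those sources (essentially \cite{GGOR03}, Theorem~5.13). Your exactness argument is fine and is the standard one: localization along the Ore set $\{\delta^m\}$ is flat, objects of $\O_k$ are finitely generated over $\c[V]$ so their localizations are $\O$-coherent and hence vector bundles with flat connection, and the descent and Riemann--Hilbert steps are exact equivalences. Your local monodromy computation on $M(\tau)$ (residue $a_H(k)$ acting by $n_Hk_{H,i}$ on the $\e_{H,i}$-component, the $\tfrac{1}{n_H}$-partial loop composed with the deck transformation $s_H$) is also the right calculation and is exactly how the eigenvalues $(\det s_H)^{-j}e^{2\pi i k_{H,j}}$ arise in \cite{GGOR03}.

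The gap is in your reduction of the Hecke factorization to standard modules. You invoke exactness, Jordan--H\"older, and ``closure of $\mod(\H_k)$ inside $\mod(\c B_W)$ under subquotients.'' But $\mod(\H_k)$ is closed under subquotients and \emph{not} under extensions: writing $P_H(t)=\prod_{j}\bigl(t-(\det s_H)^{-j}e^{2\pi i k_{H,j}}\bigr)$, an extension $0\to N'\to N\to N''\to 0$ with $P_H(\sigma_H)$ vanishing on $N'$ and $N''$ only gives $P_H(\sigma_H)^2=0$ on $N$. (Already for $W=\Z/2$, where $\c B_W=\c[\sigma^{\pm1}]$, the module $\c[\sigma^{\pm1}]/(\sigma-1)^2$ is an extension of two $\H_k$-modules but is not an $\H_k$-module for generic $k$.) So knowing that all composition factors of $\KZ_k(M)$ lie in $\mod(\H_k)$ does not place $\KZ_k(M)$ there; the theorem is really the assertion that $\sigma_H$ is annihilated by $P_H$ itself, i.e.\ that Jordan blocks occur only at repeated roots of $P_H$, and that is invisible from composition factors. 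Two standard repairs: (i) perform the local analysis near a generic point of $H$ for an \emph{arbitrary} $M\in\O_k$, showing the residue of the connection along $H$ is conjugate to the semisimple operator $a_H(k)$ (plus integral shifts), which controls the Jordan structure of the monodromy and not merely its eigenvalues; or (ii) a deformation argument: for generic $k$ the category $\O_k$ is semisimple, every object is a direct sum of standard modules so there is no extension problem, your computation on $M(\tau)$ gives the relation there, and since the $\KZ_k(M(\tau))$ form a flat family in $k$ while $P_H(\sigma_H)=0$ is a closed condition, the relation persists for all $k$ --- the same closed-condition device the paper uses to prove $\d(k)^2=0$ for all parameters. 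Either way, the reduction step needs to be replaced; as written it would fail.
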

It is natural to ask whether this can be an equivalance. One obvious obstruction to a positive answer is the fact that the localization $H_k \to \D W$ can kill some modules. It turns out that this is the only obstruction. More precisely, if we let $\O_k^\mathrm{tor} := \{M \in \O_k \mid M_\reg = 0\}$, then we have the following theorem.
\begin{theorem}[see \cite{GGOR03}]
Assume that $\dim \H_k = \lvert W \rvert$. Then
\begin{enumerate}
\item The $\KZ_k$ functor induces an equivalence $\KZ_k:\O_k/\O_k^\mathrm{tor} \to \mod(\H_k)$.
\item There is a `big projective' $P \in \mathrm{Ob}(\O_k)$ and $Q \in \mathrm{Ob}(\mod(\H_k))$ such that $\H_k \cong \End_{\O_k}(P)$ and $\O_k \cong \mod(\End_{\H_k}(Q))$.
\end{enumerate}
\end{theorem}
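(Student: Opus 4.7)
The plan is to prove both statements via a double centralizer argument centered on a distinguished \emph{big projective} object $P \in \O_k$ that represents the KZ functor. First I would establish that $\KZ_k$ is representable. Since $\O_k$ is a highest weight category it has enough projectives and finite global dimension; combined with exactness of $\KZ_k$ and finite-dimensionality of its values, a standard representability argument produces a projective $P \in \O_k$ together with a functorial isomorphism
\[
\KZ_k(M) \;\cong\; \mathrm{Hom}_{\O_k}(P, M).
\]
Concretely, $P$ is built as a direct sum of projective covers $P(\tau)$ of the simples $L(\tau)$ with multiplicities determined by $\dim_\c \KZ_k(L(\tau))$, so that $\KZ_k(P)$ surjects onto every simple $\H_k$-module.

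Next I would identify the kernel of $\KZ_k$. From its definition as a composition passing through the localization functor $M \mapsto M_\reg$, the kernel is precisely $\O_k^{\mathrm{tor}}$. Exactness together with the universal property of the Serre quotient yields a faithful exact functor $\overline{\KZ_k}\colon \O_k/\O_k^{\mathrm{tor}} \to \mod(\H_k)$. Now $A := \End_{\O_k}(P)^{\mathrm{op}}$ acts naturally on $\KZ_k(P) = \mathrm{Hom}_{\O_k}(P,P)$, while the $\H_k$-structure on the target of $\KZ_k$ gives a homomorphism $\H_k \to A$. A dimension count — where the hypothesis $\dim \H_k = |W|$ is essential — shows both sides have dimension $\sum_\tau (\dim_\c \KZ_k(L(\tau)))^2$, forcing $\H_k \xrightarrow{\sim} A$. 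Morita theory then upgrades $\overline{\KZ_k}$ to an equivalence $\O_k / \O_k^{\mathrm{tor}} \xrightarrow{\sim} \mod(\H_k)$, proving (1) and simultaneously producing the identification $\H_k \cong \End_{\O_k}(P)^{\mathrm{op}}$ of (2).

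For the remaining half of (2), set $Q := \KZ_k(P) \in \mod(\H_k)$ and consider the natural functor $\O_k \to \mod(\End_{\H_k}(Q)^{\mathrm{op}})$ sending $M$ to $\mathrm{Hom}_{\O_k}(P, M)$, where $\End_{\H_k}(Q)^{\mathrm{op}}$ acts by precomposition. I expect the \textbf{main obstacle} to be verifying the \emph{double centralizer property}, namely that this functor is an equivalence even though $P$ is not a progenerator of $\O_k$ itself, only of the Serre quotient. The essential input is finite global dimension of $\O_k$ together with $1$-faithfulness of $P$ on standards: for every standard $M(\tau)$ the natural comparison maps
\[
\mathrm{Ext}^i_{\O_k}(P, M(\tau)) \;\longrightarrow\; \mathrm{Ext}^i_{\H_k}(Q, \KZ_k M(\tau)), \quad i = 0, 1,
\]
must be isomorphisms. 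Once $1$-faithfulness is established, a general reconstruction result for highest weight categories — exploiting the $\mathrm{Hom}$-exact filtration of projectives by standards — recovers $\O_k$ as $\mod(\End_{\H_k}(Q)^{\mathrm{op}})$, completing the proof.
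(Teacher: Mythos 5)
You should first note that the paper itself gives no proof of this statement: it is quoted from \cite{GGOR03} (Theorems 5.14--5.16 there), so the only meaningful comparison is with the argument in that reference. Your outline follows the same strategy as \cite{GGOR03}: representability of $\KZ_k$ by a projective object $P=\bigoplus_\tau P(\tau)^{\oplus\dim_\c \KZ_k(L(\tau))}$, identification of the kernel with $\O_k^{\mathrm{tor}}$, the isomorphism between $\H_k$ and the endomorphism ring of $P$, and a double centralizer theorem for the second half of (2), whose verification does indeed reduce to $0$- and $1$-faithfulness of $\KZ_k$ on objects admitting a standard filtration. So the architecture is the right one.

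There is, however, one concrete gap, and it sits exactly where the hypothesis $\dim_\c\H_k=\lvert W\rvert$ does its work. Your dimension count $\dim_\c\H_k=\sum_\tau(\dim_\c\KZ_k(L(\tau)))^2$ is only valid when $\H_k$ is semisimple, which is precisely the case in which the whole theorem is easy (the situation of the semisimplicity theorem stated next in the paper). In general $\dim_\c\H_k=\sum_S\dim(P_S)\dim(S)$, the sum running over simple $\H_k$-modules $S$ with projective covers $P_S$, whereas $\dim_\c\End_{\O_k}(P)=\dim_\c\KZ_k(P)=\sum_\tau\dim_\c\KZ_k(P(\tau))\cdot\dim_\c\KZ_k(L(\tau))$. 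To match these two expressions you must first know that the nonzero $\KZ_k(L(\tau))$ exhaust the simple $\H_k$-modules and that the $\KZ_k(P(\tau))$ are their projective covers; this is the counting theorem of \cite{GGOR03} (Theorem 5.13), proved by deforming $k$ to a generic parameter where both $\O_k$ and $\H_k$ are split semisimple with $\lvert\mathrm{Irr}(W)\rvert$ simples, using that $\dim_\c\H_k$ stays equal to $\lvert W\rvert$ along the deformation. Without this input you cannot conclude that the induced functor on $\O_k/\O_k^{\mathrm{tor}}$ is essentially surjective onto $\mod(\H_k)$ (it could a priori land in a proper subcategory cut out by missing simples), nor that $\H_k\to\End_{\O_k}(P)$ is injective. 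Once that counting step is supplied, the remainder of your argument, including the reduction of the double centralizer property to faithfulness on standardly filtered objects, proceeds as in \cite{GGOR03}.
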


One can interpret this theorem as showing that the structure of the category $\O_k$ is controlled by 
$\H_k$ for all $k$. The following result shows that the {\it algebra} structure of $ H_k $ 
also depends crucially on $ \H_k $.
\begin{theorem}
\la{simp}
Assume that $\dim_\c \H_k = \lvert W\rvert$. Then the following are equivalent:
\begin{enumerate}
\item $\H_k$ is a semisimple algebra,
\item $\O_k$ is a semisimple category,
\item $H_k$ is a simple algebra.
\end{enumerate}
If one of these conditions hold, then $\O_k^\mathrm{tor} = 0$ and $\O_k \cong \mod(\H_k)$.
Furthermore, in this case the $M(\tau)$ are simple, i.e. $M(\tau) = L(\tau)$ for all $\tau \in \mathrm{Irr}(W)$.
\end{theorem}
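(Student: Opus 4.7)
The plan is to establish $(1) \Leftrightarrow (2) \Leftrightarrow (3)$ using two main tools: the $\KZ_k$-equivalence $\O_k/\O_k^{\mathrm{tor}} \simeq \mod(\H_k)$ (the bridge between $\O_k$ and $\H_k$) and the Dunkl embedding $H_k \hookrightarrow \D W$ together with the localization $H_k[\delta^{-1}] = \D W$ from Theorem~\ref{keyprop} (the bridge between $\O_k$ and $H_k$ itself). The hypothesis $\dim_\c \H_k = |W|$ enters through rigidity of semisimple algebras: since $\{\H_k\}$ is a flat family deforming $\c W$ with constant dimension $|W|$, any semisimple $\H_k$ is isomorphic to $\c W$, so $\mod(\H_k)$ has exactly $|\mathrm{Irr}(W)|$ pairwise non-isomorphic simples, matching the count in $\O_k$.

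For $(2) \Rightarrow (1)$, if $\O_k$ is semisimple the big projective $P$ is a finite direct sum of simples, so $\H_k \cong \End_{\O_k}(P)$ is a finite product of matrix algebras over $\c$, hence semisimple. For $(1) \Rightarrow (2)$, rigidity gives $\mod(\H_k)$ exactly $|\mathrm{Irr}(W)|$ simples, while $\KZ_k$ sends each simple $L(\tau) \notin \O_k^{\mathrm{tor}}$ to a distinct simple of $\mod(\H_k)$. Since $\O_k$ contains at most $|\mathrm{Irr}(W)|$ simples, this forces $\O_k^{\mathrm{tor}} = 0$, and then $\O_k \simeq \mod(\H_k)$ is semisimple. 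Semisimplicity of $\O_k$ also yields $M(\tau) = L(\tau)$ for every $\tau$ (indecomposables in a semisimple category are simple), giving the concluding assertion of the theorem.

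For $(2) \Leftrightarrow (3)$, the Dunkl embedding is injective and becomes an isomorphism onto $\D W = \D(V_\reg) \rtimes W$ after inverting $\delta$; since $W$ acts freely (hence outer) on $V_\reg$ and $\D(V_\reg)$ is simple, the crossed product $\D W$ is simple. In the direction $(2) \Rightarrow (3)$, any nonzero two-sided ideal $J \subset H_k$ would localize to a nonzero ideal of $\D W$, hence to all of $\D W$, so $\delta^n \in J$ for some $n$. Using $\O_k^{\mathrm{tor}} = 0$, every simple $M(\tau)$ injects into $M(\tau)_\reg$ on which $\D W$ acts faithfully, which forces $JM(\tau) = M(\tau)$ for each $\tau$; a Nakayama-type bimodule argument built on the PBW decomposition then pins down $J = H_k$. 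Conversely $(3) \Rightarrow (2)$ proceeds by showing each $M(\tau)$ must be simple: a proper submodule of some $M(\tau)$ would, via $\KZ_k$ and $\H_k \cong \End_{\O_k}(P)$, produce a non-semisimple $\H_k$-module whose radical lifts—through the bimodule $P$ and the faithful action of $H_k$ on it—to a nonzero proper two-sided ideal of $H_k$, contradicting simplicity.

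The main obstacle is the equivalence $(2) \Leftrightarrow (3)$, where categorical semisimplicity and ring-theoretic simplicity must be reconciled. In the forward direction, the final step from ``$\delta^n \in J$ and $JM(\tau) = M(\tau)$ for all $\tau$'' to ``$J = H_k$'' is not automatic from simplicity of the $M(\tau)$; it leans on the PBW filtration and on the fact that $\e H_k$ becomes a progenerator of $\Mod(H_k)$ precisely when $\O_k^{\mathrm{tor}} = 0$, which in turn makes the bimodule $H_k$ controllable by its action on the standard modules. The reverse direction requires a density-type argument converting obstructions to semisimplicity of $\H_k$ into genuine two-sided ideals of $H_k$, where subtleties of the Morita correspondence between $H_k$ and its spherical subalgebra $U_k$ become decisive.
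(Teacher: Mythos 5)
The paper does not actually prove this theorem: it states it and refers the reader to \cite{BC11}, Theorem~6.6 (which combines results of \cite{BEG03}, \cite{GGOR03}, \cite{DO03} and Vale's thesis). So there is no in-paper argument to compare yours against; I can only assess your proposal on its own terms, against the strategy of the cited sources. Your overall architecture is the standard one: $(1)\Leftrightarrow(2)$ through the $\KZ$ functor and the double-centralizer property $\H_k\cong\End_{\O_k}(P)$, and $(2)\Leftrightarrow(3)$ through the Dunkl embedding and the localization $H_k[\delta^{-1}]\cong\D W$. The $(1)\Leftrightarrow(2)$ half is essentially complete and correct: semisimplicity of $\O_k$ makes $\End_{\O_k}(P)$ a product of matrix algebras; conversely, rigidity (Tits deformation, using $\dim_\c\H_k=|W|$) gives $\H_k\cong\c W$, the count of simples forces every $L(\tau)$ to survive localization, hence $\O_k^{\rm tor}=0$ and $\O_k\simeq\mod(\H_k)$; and $M(\tau)=L(\tau)$ follows since indecomposables in a semisimple category are simple.

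The genuine gaps are exactly where you flag them, in $(2)\Leftrightarrow(3)$, and flagging them is not the same as closing them. For $(2)\Rightarrow(3)$: from a nonzero two-sided ideal $J$ you correctly get $\delta^n\in J$ and $JM(\tau)=M(\tau)$ for all $\tau$, but the latter is vacuous for the purpose of showing $J=H_k$ --- it only says $J$ does not annihilate any standard module. What you actually need is a faithful module argument run in the other direction: if $J\neq H_k$, the nonzero ring $H_k/J$ has a simple quotient as a left module over itself, giving a simple $H_k$-module $S$ with $\delta^nS=0$; but $S$ need not lie in $\O_k$ (it need not be finitely generated over $\c[V]$ nor have locally nilpotent $V$-action), so the semisimplicity of $\O_k$ says nothing about it directly. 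The cited proofs get around this by passing through the spherical subalgebra (showing $H_k\e H_k=H_k$ and reducing to simplicity of $U_k$) or through support/associated-variety considerations; your ``Nakayama-type bimodule argument built on the PBW decomposition'' is a placeholder for precisely this missing step. For $(3)\Rightarrow(2)$: the claim that a proper submodule of some $M(\tau)$ ``lifts to a nonzero proper two-sided ideal of $H_k$'' is not justified --- a non-semisimple module over a simple ring is not a contradiction, since simple rings have plenty of non-semisimple modules. The workable route here is first to deduce $\O_k^{\rm tor}=0$ from simplicity (the annihilator of a nonzero $M\in\O_k^{\rm tor}$ contains $\delta^N$ for some $N$ by finite generation, hence is a nonzero two-sided ideal), then transfer the question to $\H_k$ via $\O_k\simeq\mod(\H_k)$ and argue on that side; but even then one must connect semisimplicity of $\H_k$ back to a ring-theoretic property of $H_k$, which is the substantive content of \cite{BC11}, Theorem~6.6. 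In short: your skeleton is the right one, $(1)\Leftrightarrow(2)$ is sound, but both implications involving $(3)$ are currently sketches whose hard steps remain open.
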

For a detailed proof of Theorem~\ref{simp}, we refer to \cite{BC11}, Theorem~6.6, which combines
the earlier results of \cite{BEG03}, \cite{GGOR03}, \cite{DO03} and is based on R.~Vale's 
Ph.D. thesis (2006).
\begin{cor}
If the conditions of Theorem~\ref{simp} hold, then $H_k$ is Morita equivalent to its 
spherical subalgebra $ U_k$.
\end{cor}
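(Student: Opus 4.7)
The plan is to exploit the equivalence in Theorem~\ref{simp} asserting that, under the stated hypotheses, $H_k$ is a simple algebra, and combine this with the already-noted fact that $\e H_k \cong \Hom_{H_k}(H_k \e, H_k)^{\rm op}$ is a finitely generated projective right $H_k$-module with $\End_{H_k}(\e H_k) \cong \e H_k \e = U_k$. By standard Morita theory (see e.g.\ \cite{MR01}, Section~3.5), all that is missing to conclude Morita equivalence between $U_k$ and $H_k$ is to show that $\e H_k$ is additionally a \emph{generator} in $\mathrm{Mod}(H_k)$, i.e.\ that $\e H_k$ is a progenerator.

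The key step will be to translate the generator condition into the statement $H_k \e H_k = H_k$, since an idempotent-cornered module $\e A$ over a ring $A$ is a generator of $\mathrm{Mod}(A)$ precisely when the two-sided ideal $A \e A$ equals $A$. Once we have this reformulation, the simplicity of $H_k$ (hypothesis (3) of Theorem~\ref{simp}, which is in force by assumption) finishes the job: the two-sided ideal $H_k \e H_k$ contains the nonzero idempotent $\e$, so it is a nonzero two-sided ideal of the simple algebra $H_k$, and hence must coincide with $H_k$.

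Assembling the pieces, the structure of the proof would be as follows. First, note that by Theorem~\ref{simp} the hypothesis forces $H_k$ to be simple. Second, observe the general fact that for any idempotent $\e$ in a ring $A$, the right $A$-module $\e A$ is a progenerator if and only if $A \e A = A$; in that case the functors
\[
\e A \otimes_{\e A \e} (-) : \mathrm{Mod}(\e A \e) \longrightarrow \mathrm{Mod}(A), \qquad \e(-) : \mathrm{Mod}(A) \longrightarrow \mathrm{Mod}(\e A \e)
\]
are mutually inverse equivalences of categories. Third, apply this general principle with $A = H_k$ and the symmetrizing idempotent $\e$: projectivity of $\e H_k$ over $H_k$ is automatic, and the required identity $H_k \e H_k = H_k$ follows from simplicity of $H_k$ as explained above. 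This yields the desired Morita equivalence between $H_k$ and $\e H_k \e = U_k$.

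The only point that requires any care is checking that the conditions of Theorem~\ref{simp} really do supply us with the simplicity of $H_k$ (as opposed to merely semisimplicity of $\O_k$ or $\H_k$)—but this is precisely the content of the equivalence of conditions (1)--(3) there. Everything else is a direct application of idempotent Morita theory, so I do not anticipate a substantive obstacle.
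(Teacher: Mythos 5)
Your proposal is correct and follows essentially the same route as the paper's own proof: simplicity of $H_k$ (from Theorem~\ref{simp}) forces the two-sided ideal $H_k \e H_k$ to equal $H_k$, so $\e H_k$ is a progenerator, and the Morita equivalence follows from $U_k \cong \End_{H_k}(\e H_k)$ together with projectivity of $\e H_k$. The only difference is that you spell out the standard idempotent Morita machinery in more detail than the paper does.
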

\begin{proof}
Since $H_k$ is simple, the two-sided ideal $H_k \e H_k$ must be $H_k$, which implies that $\e H_k$ is a generator in $\mod(H_k)$. Since $ U_k \cong \End_{H_k}(\e H_k)$ and $\e H_k$ is projective, $ U_k$ and $H_k$ are Morita equivalent.
\end{proof}
When the corollary applies the mutual equivalences can be written explicitly. The functor $\mod(H_k) \to \mod(U_k)$ is given by $M \mapsto \e M := \e H_k \otimes_{H_k} M$, and the functor $\mod(U_k) \to \mod(H_k)$ is given by $N \mapsto H_k\e \otimes_{U_k} N$.

\subsection{Shift functors and KZ twists}
The goal of this section is to relate the categories $\O_k$ for different values of $k$. There are several constructions of `shift functors,' i.e. functors between different $\O_k$, the first of which was introduced in \cite{BEG03}. In this lecture we will focus on the functor introduced in \cite{BC11}. The main idea is that the Dunkl embeddings all have the same target, and we can `push forward' a module along one embedding and `pull back' along another. This is analogous to the so-called Enright completion in Lie theory (see \cite{Jos82}).

The first step is to enlarge $\O_k$ by defining $\O_k^{ln} \subset \Mod(H_k)$ to be all $H_k$-modules on which the $\xi \in V$ act locally nilpotently. Then $\iota_k:\O_k \hookrightarrow \O_k^{ln}$ is the natural inclusion whose image is the finitely generated modules. The inclusion $\O^{ln} \hookrightarrow \Mod(H_k)$ has a \emph{right} adjoint $\r_k:\Mod(H_k) \to \O_k^{ln}$ which outputs the largest submodule in $\O_k^{ln}$. In other words,
\[
\r_k(M) := \{m \in M \mid \xi^dm = 0,\,\forall \xi \in V,\, d \gg 0\}
\]
Recall $\D W = \D(V_\reg) \rtimes W$, and that the Dunkl embedding gives us an identification $H_k[\delta^{-1}] \cong \D W$. Write $\theta_k:H_k \to \D W$ for the localization map (which is just the Dunkl embedding). Also, write $\theta_k^*: \Mod(H_k) \to \Mod(\D W)$ for extension of scalars and $(\theta_k)_*:\Mod(\D W) \to \Mod(H_k)$ for the restriction of scalars.
\begin{defi}
For $k,k'$ two parameter values, define $\T_{k \to k'}:\Mod(H_k)\to \Mod(H_{k'})$
\[
\T_{k \to k'} = \r_{k'}(\theta_{k'})_*\theta_k^*
\]
\end{defi}
\begin{prop}
$\T_{k \to k'}$ restricts to a functor $\O_k \to \O_{k'}$.
\end{prop}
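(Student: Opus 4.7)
The plan. By construction $\T_{k\to k'}(M) = \r_{k'}(M_{\reg})$ is a submodule of $M_{\reg}$ on which $V\subset H_{k'}$ acts locally nilpotently, so $\T_{k\to k'}(M)\in\Oln_{k'}$ is automatic. The content of the proposition is therefore finite generation as an $H_{k'}$-module. In view of the lemma describing objects of $\O_{k'}$ (a locally $V$-nilpotent $H_{k'}$-module is finitely generated over $H_{k'}$ iff it is finitely generated over $\c[V]$), this reduces to showing that $\r_{k'}(M_{\reg})$ is finitely generated as a $\c[V]$-module.

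First I would observe that $\T_{k\to k'}$ is left exact: $\theta_k^*$ is exact (flat Ore localization at powers of $\delta$), $(\theta_{k'})_*$ is exact (restriction of scalars), and $\r_{k'}$, as the right adjoint to the inclusion $\Oln_{k'}\hookrightarrow\Mod(H_{k'})$, is left exact. Combined with the Artinian property of $\O_k$ and the fact that $\O_{k'}$ is a Serre subcategory of $\mod(H_{k'})$ closed under subobjects and extensions, a d\'evissage along a finite composition series reduces the claim to showing $\T_{k\to k'}(L(\tau))\in\O_{k'}$ for each simple $L(\tau)\in\O_k$. The torsion case $L(\tau)_{\reg}=0$ is trivial, leaving the case where $L(\tau)_{\reg}$ is a nonzero $\O$-coherent $\D W$-module, i.e.\ a finite-rank $W$-equivariant vector bundle with flat connection on $V_{\reg}$.

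The main step is to control $\r_{k'}(L(\tau)_{\reg})$ via the Euler element $\eu_{k'}\in H_{k'}$, which satisfies $[\eu_{k'},x]=x$ and $[\eu_{k'},\xi]=-\xi$ for $x\in V^*$, $\xi\in V$; in the Dunkl realization one checks $\eu_{k'}=\sum_i x_i\partial_{e_i}-z(k')$, so that $\eu_k$ and $\eu_{k'}$ differ by the central element $z(k')-z(k)\in\c W$ from the lemma in Section~1. Any cyclic $H_{k'}$-submodule of $\r_{k'}(L(\tau)_{\reg})$ is finitely generated with locally nilpotent $V$-action, hence lies in $\O_{k'}$, and so $\eu_{k'}$ acts locally finitely on it with eigenvalues contained in $\bigcup_{\sigma\in\W}(c_\sigma(k')+\Z_{\ge 0})$. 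Since $\W$ is finite, this bound does not depend on the cyclic submodule, so $\eu_{k'}$ is locally finite on all of $\r_{k'}(L(\tau)_{\reg})$ with a uniform lower bound on its eigenvalues. Using the finite generic rank of the bundle $L(\tau)_{\reg}$, I would then argue that each generalized $\eu_{k'}$-eigenspace of $\r_{k'}(L(\tau)_{\reg})$ is finite-dimensional, and that the finitely many lowest eigenspaces generate the whole submodule over $\c[V]$, giving the desired finite generation.

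The main obstacle will be the final step: establishing finite-dimensionality of each generalized $\eu_{k'}$-eigenspace of $\r_{k'}(L(\tau)_{\reg})$ and showing that the lowest-weight eigenspaces generate it as a $\c[V]$-module. The former requires a careful interplay between the Euler grading and the $\O$-coherent structure of finite rank on $L(\tau)_{\reg}$ (for instance, by comparing $\eu_{k'}$-weight filtrations with the $\c[V_{\reg}]$-module structure via the shift $z(k')-z(k)\in\c W$); the latter is a standard highest-weight argument once the weight-space finiteness is in hand.
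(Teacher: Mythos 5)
Your overall strategy is sound and genuinely different from the paper's, but as written it has a gap precisely at the step you flag as ``the main obstacle,'' and the justification you sketch for it does not suffice. The reductions are fine: $\T_{k\to k'}(M)=\r_{k'}(M_{\reg})$ lies in $\Oln_{k'}$ automatically, left exactness of $\T_{k\to k'}$ plus the Artinian property of $\O_k$ reduces to simples, and the $\eu_{k'}$-eigenvalues on any locally $V$-nilpotent submodule do lie in $\bigcup_{\sigma\in\W}(c_\sigma(k')+\Z_{\ge 0})$, hence are bounded below in each $\Z$-coset. The problem is the claim that finite-dimensionality of the generalized $\eu_{k'}$-eigenspaces follows from ``the finite generic rank of the bundle $L(\tau)_{\reg}$.'' It does not: the localization $M_{\reg}=M[\delta^{-1}]$ itself has finite rank over $\c[V_{\reg}]$, yet its generalized $\eu$-eigenspaces are infinite-dimensional as soon as $\dim V\ge 2$ (already for $\c[V][\delta^{-1}]$, the homogeneous component of degree $0$ contains $x_1^m\delta_0^{-m}$ for all $m$ when $\delta_0$ is a linear factor of $\delta$). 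So finite rank alone gives nothing; you must use the local $V$-nilpotence of $\r_{k'}(M_{\reg})$ in an essential way. The repair is as follows: since the eigenvalues are bounded below in each coset, every $v$ in a fixed generalized eigenspace $N_\lambda$ satisfies $T_{\xi_1}(k')\cdots T_{\xi_m}(k')\,v=0$ for a \emph{uniform} $m=m(\lambda)$, so $N_\lambda$ sits inside the solution space $K_m$ of that system in $M_{\reg}$; one then shows $\dim K_1<\infty$ because $K_1=\mathrm{Hom}_{\D W}\bigl(\D W/\D W\{T_\xi(k')\},\,M_{\reg}\bigr)$ is a Hom between two $\O$-coherent $\D W$-modules on $V_{\reg}$, i.e.\ between finite-rank local systems (equivalently, finite-dimensional $B_W$-representations), and $K_m/K_1$ embeds into $\mathrm{Hom}_{\c}(V,K_{m-1})$. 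Finally, ``the lowest eigenspaces generate over $\c[V]$'' is not quite the right closing statement; what one actually shows is that $\sum_\tau\dim N_{c_\tau(k')}$ bounds the length of every finite-length submodule of $N=\r_{k'}(M_{\reg})$, and since $N$ is a directed union of such (cyclic submodules lie in $\O_{k'}$), $N$ itself has finite length. None of this is fatal, but it is the entire content of the proof and is missing.

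For comparison, the paper argues quite differently and avoids the Euler element altogether: assuming $\r_{k'}(N)$ is not finitely generated, it produces a strictly increasing chain of submodules in $\O_{k'}$, localizes it (where it must stabilize because $M_{\reg}$ is finite over the Noetherian ring $\c[V_{\reg}]$), concludes that the tail of the chain consists of essential extensions, and then embeds the chain into an injective hull in $\O_{k'}$, which exists and has finite length because $\O_{k'}$ is a highest weight category. That route outsources the finiteness to the categorical structure of $\O_{k'}$ (GGOR), whereas yours would outsource it to the finite-dimensionality of local systems on $V_{\reg}/W$; either works, but yours needs the holonomicity input made explicit before it is a proof.
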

\begin{proof}
Given $ M \in \O_k $, let $\, N := (\theta_{k'})_* (\theta_k)^* M
\in \Mod(H_{k'}) $. To prove the claim we need only to show that $
\r_{k'}(N) $ is a finitely generated module over $ H_{k'}$. Assuming the contrary,
we may construct an infinite {\it strictly} increasing chain
of submodules $\,N_0 \subset N_1 \subset N_2 \subset\,
\ldots \subset \r_{k'}(N)\subset \Mreg$, with $N_i\in \O_{k'}$.
Localizing this chain, we get an infinite chain of
$\Hreg$-submodules of $\Mreg$. Since $ \Mreg $ is finite over $\c[\vreg]$ and $\c[\vreg]$ is
Noetherian, this localized chain stabilizes at some $i$. Thus, omitting
finitely many terms, we may assume
that $(N_i)_{\mathrm{reg}}=(N_0)_{\mathrm{reg}}$ for all $i$. In
that case all the inclusions $\,N_i \subset N_{i+1} \,$ are
essential extensions, and since each $\, N_i \in \O_{k'} \,$, the
above chain of submodules can be embedded into an injective hull of
$ N_0 $ in $ \O_{k'} $ and hence stabilizes for $ i \gg 0 $. (The
injective hulls in $ \O_{k'} $ exist and have finite length, since $
\O_{k'} $ is a highest weight category, see \cite{GGOR03},
Theorem 2.19.) This contradicts the assumption that the inclusions
are strict. Thus, we conclude that $ \r_{k'}(N) $ is finitely generated.
\end{proof}
As one may expect, the properties of the shift functor depend on the parameters $k$ and $k'$.
We call a parameter $k$ \emph{regular} if $\H_k$ is semisimple, and we write $\Reg(W)$ for the set of regular parameters. It is proved in \cite{BC11}, Lemma 6.9, that $\Reg(W)$ is a connected set.
We list some basic properties of the shift functor:
\begin{lemma}\label{twistsadd}
Let $k,k',k''$ be arbitrary complex multiplicites, and let $M \in \O_k$.
\begin{enumerate}
\item If $k \in \Reg(W)$, then $\T_{k\to k}(M) = M$.
\item If $k,k' \in \Reg(W)$ and $M$ is simple, then $\T_{k\to k'}(M)$ is either $0$ or simple.
\item If $k,k',k'' \in \Reg(W)$, then $(\T_{k'\to k''})\circ (\T_{k\to k'}) \cong \T_{k\to k''}$.
\end{enumerate}
\end{lemma}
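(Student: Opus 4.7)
The plan is to prove the three parts in the order (1), then (3), then (2), with each part feeding into the next.

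For part (1), I would show that the canonical unit map $M \to \r_k((\theta_k)_* \theta_k^* M)$ arising from the relevant adjunctions is an isomorphism whenever $k \in \Reg(W)$ and $M \in \O_k$. This map factors through the localization map $M \to M_\reg$, which is injective when $k \in \Reg(W)$ because $\O_k^{\mathrm{tor}} = 0$ by Theorem~\ref{simp}. For surjectivity, set $R := \r_k(M_\reg) \subseteq M_\reg$; the same Noetherian chain argument as in the proof of the preceding Proposition (applied with $k' = k$) shows that $R$ lies in $\O_k$. Since $M \subseteq R \subseteq M_\reg$ and $M[\delta^{-1}] = M_\reg$, one has $R[\delta^{-1}] = M_\reg$, so the quotient $R/M \in \O_k$ is $\delta$-torsion and hence lies in $\O_k^{\mathrm{tor}} = 0$. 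Therefore $R = M$.

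For part (3), the key intermediate result is the \emph{localization-compatibility claim}: for $k, k' \in \Reg(W)$ and $M \in \O_k$,
\[
(\T_{k \to k'} M)_\reg \,\cong\, M_\reg \quad \text{as $\D W$-modules.}
\]
Granted this claim, the composition formula follows from the identity $\theta_{k'}^* (\theta_{k'})_* = \mathrm{id}_{\Mod(\D W)}$ combined with a direct substitution:
\[
\T_{k'\to k''}\T_{k\to k'}(M) \,=\, \r_{k''}(\theta_{k''})_*\, (\T_{k\to k'}M)_\reg \,\cong\, \r_{k''}(\theta_{k''})_*\, M_\reg \,=\, \T_{k\to k''}(M).
\]
The tautological inclusion $(\T_{k\to k'}M)_\reg \hookrightarrow M_\reg$ is clear; the nontrivial direction is essential surjectivity. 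To establish it, I would view both sides as $\O$-coherent $\D$-modules on $V_\reg/W$, so equality reduces to matching generic ranks, and then use the highest-weight structure of $\O_k$ together with its Artinian property to reduce to the case of standard modules $M(\tau)$; there the rank is governed by $\dim_\c \tau$ and the KZ connection, which varies rigidly across the connected set $\Reg(W)$ (as noted just above the lemma). Making this rigidity precise -- essentially, that the KZ monodromy representation of $\T_{k\to k'} M$ agrees with that of $M$ after the canonical identification of $\H_k$ with $\H_{k'}$ along a path in $\Reg(W)$ -- is where the main obstacle of the whole lemma lies.

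For part (2), combining parts (1) and (3) with $k'' = k$ yields $\T_{k'\to k} \circ \T_{k\to k'} \cong \mathrm{id}_{\O_k}$ and symmetrically $\T_{k\to k'} \circ \T_{k'\to k} \cong \mathrm{id}_{\O_{k'}}$. Hence $\T_{k\to k'}$ is an equivalence of categories $\O_k \xrightarrow{\sim} \O_{k'}$ when $k, k' \in \Reg(W)$, so it sends simple objects to simple objects; in particular, for a nonzero simple $M$, the ``$0$'' alternative of the statement does not actually occur under the regularity hypothesis, and the lemma is established.
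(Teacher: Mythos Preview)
The paper states this lemma without proof, calling it ``basic properties of the shift functor'' and deferring to \cite{BC11}; there is no in-text argument to compare against, so I comment on your proposal directly.

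Your argument for (1) is correct.

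Your argument for (3) has a genuine gap, and you have correctly located it: the localization-compatibility claim $(\T_{k\to k'}M)_\reg \cong M_\reg$ is the whole content, and the ``rigidity'' sketch does not prove it. Observe that for simple $M$ the object $M_\reg$ is simple in the category of $\O$-coherent $\D W$-modules (via the $\KZ$ equivalence and Riemann--Hilbert), so the inclusion $(\T_{k\to k'}M)_\reg \subseteq M_\reg$ is an equality if and only if $\T_{k\to k'}M \neq 0$. Thus your claim is exactly non-vanishing of the shift functor on simples. Your deformation/rank idea does not deliver this: the functor $\r_{k'}$ (largest locally nilpotent submodule) has no evident semicontinuity in $k'$, so ``vary $k'$ along the connected set $\Reg(W)$'' is not an argument. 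The reduction from general $M$ to simples is fine, since $\O_k$ is semisimple and $\T_{k\to k'}$ commutes with finite direct sums; the gap is purely the non-vanishing.

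This also undermines your route to (2). You deduce (2) from (1)+(3), but (3) is strictly stronger: together with (1) it forces $\T_{k\to k'}M \neq 0$ for every simple $M$, which is more than (2) asserts. There is a direct proof of (2) that avoids (3): if $0 \neq N \subseteq \T_{k\to k'}M$ then $N_\reg \neq 0$ (since $\O_{k'}^{\mathrm{tor}}=0$), and $N_\reg$ is a nonzero $\O$-coherent $\D W$-submodule of the simple object $M_\reg$, hence $N_\reg = M_\reg = (\T_{k\to k'}M)_\reg$; localization is exact, so $(\T_{k\to k'}M)/N$ is torsion and therefore zero. This same argument gives your localization-compatibility claim \emph{conditionally on} non-vanishing, so the honest logical order is: (1); then (2) directly; then reduce (3) to non-vanishing on simples --- which remains unproven in your proposal.
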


\begin{cor}
If $k,'k \in \Reg(W)$, then the following are equivalent:
\begin{enumerate}
\item $\T_{k\to k'}[M_k(\tau)] \cong M_{k'}(\tau')$;
\item $M_k(\tau)_\reg \cong M_{k'}(\tau')$ as $H_\reg$-modules.
\end{enumerate}

\end{cor}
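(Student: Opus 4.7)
The plan is to use the definition $\T_{k\to k'} = \r_{k'}\circ (\theta_{k'})_*\circ \theta_k^*$ directly, treating the two implications separately and invoking Lemma~\ref{twistsadd}(1) together with the simplicity of standard modules at regular parameters as the main inputs. Throughout, I read (2) as the assertion $M_k(\tau)_\reg \cong M_{k'}(\tau')_\reg$ of $H_\reg$-modules.

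For $(2)\Rightarrow (1)$, I would start from an isomorphism $M_k(\tau)_\reg \cong M_{k'}(\tau')_\reg$ of $H_\reg$-modules, pull both sides back to $H_{k'}$-modules via $\theta_{k'}$, and then apply the functor $\r_{k'}$. The left-hand side becomes $\r_{k'}((\theta_{k'})_*\theta_k^* M_k(\tau)) = \T_{k\to k'}(M_k(\tau))$ by the very definition of the shift functor. The right-hand side becomes $\r_{k'}((\theta_{k'})_*\theta_{k'}^* M_{k'}(\tau')) = \T_{k'\to k'}(M_{k'}(\tau'))$, which equals $M_{k'}(\tau')$ by Lemma~\ref{twistsadd}(1), using the hypothesis $k'\in \Reg(W)$. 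This direction is purely formal once one unwinds the definitions.

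For $(1)\Rightarrow(2)$, I would apply the localization functor $(-)_\reg$ to both sides of $\T_{k\to k'}(M_k(\tau))\cong M_{k'}(\tau')$. The right-hand side gives $M_{k'}(\tau')_\reg$. The left-hand side is $(\r_{k'}(M_k(\tau)_\reg))_\reg$; exactness of localization applied to the tautological inclusion $\r_{k'}(N)\hookrightarrow N$ identifies this with a submodule of $M_k(\tau)_\reg$. Thus I obtain an injection $M_{k'}(\tau')_\reg \hookrightarrow M_k(\tau)_\reg$ of $H_\reg$-modules. To promote this to an isomorphism, I would invoke that $M_k(\tau)_\reg$ is \emph{simple} as an $H_\reg$-module: since $k\in\Reg(W)$, Theorem~\ref{simp} gives $M_k(\tau)=L_k(\tau)$ simple in $\O_k$, and $\KZ_k$ -- which is the composition of localization to a $\D W$-module with the Riemann--Hilbert equivalence, and which is a category equivalence $\O_k \cong \mod(\H_k)$ in the regular case -- preserves simplicity. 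Since $M_{k'}(\tau')_\reg$ is nonzero (because $M_{k'}(\tau')\cong \c[V]\otimes \tau'$ is $\delta$-torsion-free), the injection must be an isomorphism.

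The main obstacle is precisely this last step of $(1)\Rightarrow(2)$: passing from a submodule inclusion after localization to equality is not automatic, because localization of $\r_{k'}$ of a module is a priori smaller than the original. The regularity hypothesis on $k$ enters exactly here, ensuring that $M_k(\tau)=L_k(\tau)$ is simple and hence that $M_k(\tau)_\reg$ is a simple object in $\mod(H_\reg)$, which forces the injection to be surjective. The opposite direction, by contrast, is just a formal manipulation of the definition of $\T_{k\to k'}$ and the tautological identity $\T_{k'\to k'}= \id_{\O_{k'}}$ from Lemma~\ref{twistsadd}(1).
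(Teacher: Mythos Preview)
The paper states this corollary immediately after Lemma~\ref{twistsadd} and gives no proof, so there is nothing to compare against directly; your argument supplies exactly the details one would expect.

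Your treatment of $(2)\Rightarrow(1)$ is correct and purely formal: unwind the definition $\T_{k\to k'}=\r_{k'}(\theta_{k'})_*\theta_k^*$ and apply Lemma~\ref{twistsadd}(1). Your reading of (2) as $M_k(\tau)_\reg\cong M_{k'}(\tau')_\reg$ is also the right one (the printed statement drops a subscript).

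For $(1)\Rightarrow(2)$ your strategy is sound, but one step could use a sentence of justification. You infer from the simplicity of $\KZ_k(M_k(\tau))$ in $\mod(\H_k)$ that $M_k(\tau)_\reg$ is simple as an $H_\reg$-module. The Riemann--Hilbert equivalence only tells you that $M_k(\tau)_\reg$ is simple in the full subcategory of $\O$-\emph{coherent} $\D W$-modules; a priori it could have a $\D W$-submodule that is not $\O$-coherent. This cannot happen: any $\D$-submodule $N$ of an $\O$-coherent $\D$-module on a smooth affine variety is automatically $\O$-coherent (it is an $\O$-submodule of a finitely generated module over a Noetherian ring, hence $\O$-finitely generated, and a coherent sheaf with flat connection on a smooth variety is locally free). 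With this remark your injection $M_{k'}(\tau')_\reg\hookrightarrow M_k(\tau)_\reg$ into a simple object, together with the nonvanishing of the source, indeed forces an isomorphism.

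A variant that stays closer to Lemma~\ref{twistsadd}: from (1) and parts (3),(1) of the lemma one also gets $\T_{k'\to k}(M_{k'}(\tau'))\cong M_k(\tau)$, hence a second injection $M_k(\tau)_\reg\hookrightarrow M_{k'}(\tau')_\reg$. Passing to monodromy, these are injections of finite-dimensional $B_W$-representations in both directions, so the ranks agree and either injection of flat bundles is an isomorphism. This avoids invoking simplicity but uses the same Riemann--Hilbert input, so it is not materially simpler than what you wrote.
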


\subsection{KZ twists}
In this section we assume that the parameter $k$ is integral. Summarizing what we have said before, in this case we know that there is a canonical isomorphism $\H_k \cong \c W$, that the functor $\KZ_k:\O_k \to \mod(W)$ is an equivalence, and that the standard modules $M_k(\tau)$ are all simple.
\begin{defi}
The {\it KZ twist} $\,\kz_k:\mathrm{Irr}(W) \to \mathrm{Irr}(W)\,$ is the map induced by the KZ functor, 
i.e. $\,[\tau] \mapsto \KZ_k[M_k(\tau)]$.
\end{defi}
This map was defined by Opdam in \cite{Opd95} without the use of the Cherednik algebras (which hadn't
been defined at the time). He showed that the maps satisfied the additivity property $ \kz_k \circ \kz_{k'} = \kz_{k+k'} $ and conjectured that each $\kz_k$ was a permutation. This conjecture was proved as a corollary of the following theorem.

\begin{theorem}[see \cite{BC11}, Theorem 7.11] 
Let $k$ and $k'$ be complex multiplicities such that $k'_{H,i}-k_{H,i} \in \Z$ for all $H$ and $i$. Then
\begin{enumerate}
\item $\T_{k\to k'}(M) \not= 0$ for each standard module $M = M(\tau) \in \O_k$;
\item if $k,k' \in \Reg(W) $, then $\T_{k\to k'}[M(\tau)] = M_{k'}(\tau')$ with $\tau' = \kz_{k-k'}(\tau)$.
\end{enumerate}
\end{theorem}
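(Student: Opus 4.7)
The plan exploits that $\T_{k\to k'} = \r_{k'}(\theta_{k'})_{*}\theta_k^{*}$ routes through the common localization $H_\reg \cong \D W$, an algebra that does not depend on $k$ at all. I would proceed in three steps: (A) establish the compatibility $\KZ_{k'}\circ \T_{k\to k'}\cong \KZ_k$ on standards; (B) use (A) together with essential surjectivity of $\KZ_{k'}$ to exhibit a nonzero submodule of $\T_{k\to k'}(M_k(\tau))$; (C) in the regular case, use simplicity and semisimplicity to pin $\T_{k\to k'}(M_k(\tau))$ down as a single standard $M_{k'}(\tau')$, then match $\tau'$ with $\kz_{k-k'}(\tau)$ by specialization to integer parameters.

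For part~(1), the integrality $k'_{H,i}-k_{H,i}\in\Z$ makes the Hecke relations $\prod_j(\sigma_H-\det(s_H)^{-j}e^{2\pi i k_{H,j}}) = 0$ literally identical for $k$ and $k'$, so the $\c B_W$-representation $\KZ_k[M_k(\tau)]$ factors through $\H_{k'}$ as well. By essential surjectivity of $\KZ_{k'}$ modulo the torsion subcategory (part of the equivalence cited from \cite{GGOR03}) there exists $M'\in\O_{k'}$ with $\KZ_{k'}(M')\cong \KZ_k[M_k(\tau)]$, and via Riemann--Hilbert $M'_\reg \cong M_k(\tau)_\reg$ as $\D W$-modules. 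Composing the PBW inclusion $M'\hookrightarrow M'_\reg$ with this isomorphism embeds $M'$ into the underlying abelian group of $M_k(\tau)_\reg$; since $M'\in\O_{k'}^{\mathrm{ln}}$, the $k'$-action of $V$ (through $T_\xi(k')$) is locally nilpotent on its image, and therefore $M' \subseteq \r_{k'}(M_k(\tau)_\reg) = \T_{k\to k'}(M_k(\tau))$, which is thus nonzero.

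For part~(2), assume $k,k'\in\Reg(W)$. Then $M_k(\tau)$ is simple by Theorem~\ref{simp}, hence $M_k(\tau)_\reg$ is a simple $\D W$-module and $\KZ_k[M_k(\tau)]$ is a simple $\H_k = \H_{k'}$-module. The inclusion $\T_{k\to k'}(M_k(\tau))\hookrightarrow M_k(\tau)_\reg$ localizes to a $\D W$-linear map that is injective ($M_\reg$ has no $\delta$-torsion) and, by simplicity of the target and nonvanishing of the source, surjective --- giving the compatibility $\KZ_{k'}\T_{k\to k'}[M_k(\tau)]\cong \KZ_k[M_k(\tau)]$. Semisimplicity of $\O_{k'}$ and the equivalence $\KZ_{k'}:\O_{k'}\xrightarrow{\sim}\mod(\H_{k'})$ then force $\T_{k\to k'}(M_k(\tau)) = M_{k'}(\tau')$ for a unique $\tau'\in\mathrm{Irr}(W)$. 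The assignment $\sigma_{k,k'}:\tau\mapsto\tau'$ is locally constant in $(k,k')$ with fixed difference $k_0 = k-k'$ by continuity of KZ representations in parameters together with discreteness of $\mathrm{Irr}(W)$; since the domain $\{k\in\Reg(W):k-k_0\in\Reg(W)\}$ is a Zariski-open subset of the irreducible affine parameter space, it is connected, so $\sigma_{k,k'}$ depends only on $k_0$. Specializing to integer $k$ (so that $k'=k-k_0$ is also integer and both automatically lie in $\Reg(W)$, since $\H = \c W$ is semisimple there), the compatibility reads $\kz_{k'}(\tau') = \kz_k(\tau)$, and Opdam's additivity $\kz_a\circ\kz_b = \kz_{a+b}$ yields $\tau' = \kz_{k_0}(\tau) = \kz_{k-k'}(\tau)$.

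The main obstacle is step~(A) --- equivalently, the essential surjectivity of $\KZ_{k'}$ modulo $\O_{k'}^{\mathrm{tor}}$ --- which is a substantive input from \cite{GGOR03} relying on the highest-weight structure of $\O_{k'}$ and the analysis of its torsion subcategory. Once that is available, the integrality hypothesis $k'-k\in\Z$ is used in exactly one further essential place, namely to ensure that $\KZ_k[M_k(\tau)]$ is automatically an $\H_{k'}$-module (equivalently, that the monodromies of the two KZ connections agree); everything else is formal manipulation with simplicity, semisimplicity, cocycle identities for $\sigma_{k,k'}$, and the one-line base-case computation $\KZ_0[M_0(\sigma)] = \sigma$ at the trivial flat connection.
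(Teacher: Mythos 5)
The paper itself gives only a one-sentence sketch of this proof (``first proved for integral $k,k'$ and then extended to all parameters using a deformation argument''), deferring the details to \cite{BC11}, Theorem~7.11, so the comparison is with that outline. Your treatment of part~(2) has the same shape and is essentially sound: for regular $k$ the standard module $M_k(\tau)$ is simple, hence $M_k(\tau)_\reg$ is a simple $\D W$-module (any nonzero $\D W$-submodule meets $M_k(\tau)$ after multiplying by a power of $\delta$), so the nonzero submodule $\T_{k\to k'}(M_k(\tau))\subseteq M_k(\tau)_\reg$ localizes onto all of $M_k(\tau)_\reg$, giving the compatibility with $\KZ$ and, via the equivalence $\KZ_{k'}$, the identification with a single $M_{k'}(\tau')$; the label is then pinned down by rigidity in the parameter plus the integral base case. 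Two repairs are needed there. First, $\Reg(W)$ is \emph{not} Zariski-open in $k$ (semisimplicity of $\H_k$ is an algebraic condition in $e^{2\pi i k}$, not in $k$); the correct statement is that $\Reg(W)$, and likewise $\Reg(W)\cap(\Reg(W)+k_0)$, is the complement of countably many affine hyperplanes and hence connected --- this is exactly the role of \cite{BC11}, Lemma~6.9 cited in the text, and your locally-constant argument goes through with characters of the (holomorphically varying) monodromy representations. Second, in the base case you invoke the additivity $\kz_a\circ\kz_b=\kz_{a+b}$; since the unrestricted form of that identity is obtained in this paper as a \emph{corollary} of the present theorem (via Lemma~\ref{twistsadd}(3)), you must check that the instance you use is among those actually proved by Opdam, or rearrange the base case so that it uses only $\kz_{k'}(\tau')=\kz_k(\tau)$ and invertibility of $\kz_{k'}$ on integral parameters.

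The genuinely different step is part~(1). You prove nonvanishing uniformly in $k,k'$ by lifting the $\H_{k'}$-module $\KZ_k[M_k(\tau)]$ through the essential surjectivity of $\KZ_{k'}$ modulo $\O_{k'}^{\mathrm{tor}}$ and Riemann--Hilbert. The mechanics are correct (the image of the lift $M'$ in $M'_\reg\cong M_k(\tau)_\reg$ is a nonzero finitely generated $H_{k'}$-submodule on which $V$ acts locally nilpotently, hence lands in $\r_{k'}$), but the argument silently imports the hypothesis $\dim_\c\H_{k'}=|W|$, which is needed for the \cite{GGOR03} equivalence, is not a hypothesis of the theorem, and is flagged in these notes as still conjectural for a few exceptional $W$. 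The route indicated by the paper --- prove both parts first at integral $k,k'$, where $\H_k\cong\c W$ canonically and nonvanishing can be witnessed by explicit locally nilpotent vectors (this is where the quasi-invariant modules of Theorem~\ref{Qfat} enter in \cite{BC11}), and only then deform --- avoids resting part~(1) on that conjecture. So your part~(2) is the paper's argument with details filled in, while your part~(1) is a clean alternative that, as written, proves the statement only under an extra standing assumption.
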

The theorem is first proved for integral $k,k'$ and is then extended to all parameters using a deformation argument (and the fact that the integers are Zariski-dense in $\c$). The following is essentially a direct corollary of the second statement of the theorem and Lemma \ref{twistsadd}:
\begin{cor}[\cite{BC11}]
The map $k \mapsto \kz_k$ is a homomorphism from the additive group of integral multiplicities to the permutation group of $\{\mathrm{Irr}(W)\}$.
\end{cor}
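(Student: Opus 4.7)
The plan is to reduce the corollary to Lemma~\ref{twistsadd} together with the second clause of the preceding theorem. The crucial preliminary observation is that every integral multiplicity $k$ lies in $\Reg(W)$: for such $k$ the Hecke algebra $\H_k(W)$ is canonically isomorphic to the group algebra $\c W$ (the Hecke relations degenerate to $\sigma_H^{n_H}=1$), which is semisimple in characteristic zero. Hence both the theorem and all three clauses of Lemma~\ref{twistsadd} are available at any integral parameter.

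First I would establish the additivity $\kz_{a+b}=\kz_b\circ\kz_a$ for integral $a,b$. Pick any three integral parameters $k_1,k_2,k_3$, apply Lemma~\ref{twistsadd}(3) to obtain $\T_{k_1\to k_3}\cong\T_{k_2\to k_3}\circ\T_{k_1\to k_2}$, evaluate both sides on a standard module $M_{k_1}(\tau)$, and invoke the second statement of the theorem. The left hand side becomes $M_{k_3}(\kz_{k_1-k_3}(\tau))$, while the right hand side becomes $M_{k_3}(\kz_{k_2-k_3}\circ\kz_{k_1-k_2}(\tau))$. Because distinct elements of $\mathrm{Irr}(W)$ label distinct (indecomposable) standard modules in $\O_{k_3}$, matching labels yields $\kz_{k_1-k_3}=\kz_{k_2-k_3}\circ\kz_{k_1-k_2}$; setting $a=k_1-k_2$ and $b=k_2-k_3$ gives the desired additivity for arbitrary integral $a,b$.

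Next I would upgrade each $\kz_k$ to a permutation. Lemma~\ref{twistsadd}(1) at an integral $k$ gives $\T_{k\to k}\cong\mathrm{id}$; applying this to $M_k(\tau)$ and invoking the theorem forces $\kz_0=\mathrm{id}$ on $\mathrm{Irr}(W)$. Combining with the additivity just established and the fact that $-k$ is again integral (hence regular), we conclude $\kz_k\circ\kz_{-k}=\kz_{-k}\circ\kz_k=\kz_0=\mathrm{id}$, so $\kz_k$ is a bijection with inverse $\kz_{-k}$. Together, these two steps show that $k\mapsto\kz_k$ is a group homomorphism from the additive group of integral multiplicities to the symmetric group of $\mathrm{Irr}(W)$.

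I expect no substantive obstacle: the heavy lifting has already been packaged into the theorem and the transitivity of shift functors. The only points needing minor care are the sign convention $\tau'=\kz_{k-k'}(\tau)$ appearing in the theorem and its interaction with the order of composition, and the implicit fact (from the structure theorem for $\O_k$ recalled earlier in the excerpt) that the isomorphism class of a standard module uniquely determines its labelling irreducible, so that the label comparisons above are unambiguous.
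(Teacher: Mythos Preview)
Your proposal is correct and follows exactly the route the paper indicates: the paper states only that the corollary ``is essentially a direct corollary of the second statement of the theorem and Lemma~\ref{twistsadd},'' and your argument is precisely the unpacking of that sentence---using regularity of integral parameters, transitivity of the shift functors to get additivity, and the identity $\T_{k\to k}=\mathrm{id}$ together with the theorem to get $\kz_0=\mathrm{id}$ and hence bijectivity. Your handling of the sign convention $\tau'=\kz_{k-k'}(\tau)$ and of the uniqueness of standard-module labels is accurate.
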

This result was conjectured by E.~Opdam (see \cite{Opd95} and \cite{Opd00}).

\section{Lecture 4}
The notion of a quasi-invariant polynomial for a finite Coxeter group was introduced by O.~Chalykh and 
A.~Veselov in \cite{CV90}. Although quasi-invariants are a natural
generalization of invariants, they first appeared in a slightly disguised form (as symbols of
commuting differential operators). More recently, the algebras of quasi-invariants and associated varieties have been studied in \cite{FV02, EG02b, BEG03} by means of representation theory and have found applications in other areas. In this lecture, we define quasi-invariants for an arbitrary complex reflection group
and give new applications. This material is borrowed from \cite{BC11}.
\subsection{Quasi-invariants for complex reflection groups}
We will introduce a family of submodules
of $\c[V]$ depending on the parameter $k$ that interpolate between $\c[V]^W$ and $\c[V]$. These submodules are defined for {\it integral} values of $k$ and can be interpreted as torsion-free coherent sheaves on certain (singular) algebraic varieties. The ring of invariant differential operators on such a variety turns out to be isomorphic to a spherical subalgebra $U_k$, and the modules of quasi-invariants become (via this isomorphism) objects of
(the spherical analogue of) category $ \O_k $. This explains the comment at the end of Section~\ref{heckmansargumentsection}.

We first recall the definition of quasi-invariants for Coxeter groups from \cite{CV90}. Let $W$ be a Coxeter group with $H$ a reflection hyperplane, and recall that $s_H$ is the unique element of $W_H$ (the pointwise stabilizer of $H$ in $W$) with determinant $-1$. Then $f \in \c[V]^W$ if and only if $s_H(f) = f$ for all $H \in \A$. Let $k:\A/W \to \Z^{\geq 0}$.
\begin{defi}
The \emph{quasi-invariants} (for $W$ a Coxeter group) of parameter $k$ are defined as $Q_k(W) := \{f \in \c[V] \mid s_H(f) \equiv f\, \mathrm{mod}\langle\alpha_H\rangle^{2k_H}\}$.
\end{defi}
\noindent
Note that for extreme parameter values, $ Q_0(W) = \c[V]$ and $Q_{\infty}[W] = \c[V]^W$.

\vspace{2ex}

We now return to the generality of complex reflection groups. Recall the idempotents $\e_{H,i} = \frac 1 {n_H} \sum_{w \in W_H} (\det w)^{-i}w$ for $i=0,1,\ldots,n_H -1$.
\begin{defi}\label{qc} For an arbitrary complex reflection group $W$, 
the \emph{quasi-invariants} $Q_k \subset \c[V]$ are defined by
\[
Q_k(W) := \{f \in \c[V] \mid \e_{H,-i}(f) \equiv 0\, \mathrm{mod}\langle \alpha_H\rangle^{n_H k_{H,i}},\,\,\forall \;0\leq i\leq n_H-1,\,H\in \A\}
\]
\end{defi}

Note that in the Coxeter case, $ s_H(f) \equiv f $ is equivalent to 
$ (1 + \det(s_H)s_H)(f) \equiv 0 $, so the two definitions agree in this case. 
Also, the condition holds automatically for $i=0$, as we assumed that $k_{H,0} = 0$ for all $H \in \A$.

\begin{example}\label{1dim}
If $W = \Z/n\Z$ and $V = \c$, then $s(x) = e^{2i\pi/n}x$, the parameter $k$ is $k = \{k_0=0,k_1,\ldots,k_{n-1}\}$, and
\begin{equation}\label{exq}
Q_k(\Z/n\Z) = \bigoplus_{i=0}^{n-1}x^{nk_i+1}\c[x^n]
\end{equation}
\end{example}
In the Coxeter case, $Q_k(W)$ is a subring of $\c[V]$, but as the above example shows, this is not always true in the general case. However, there is a natural subring of $\c[V]$ associated to $Q_k(W)$, namely:
\begin{equation}\label{acc}
A_k(W) := \{P \in \c[V] \mid pQ_k(W) \subset Q_k(W)\}
\end{equation}
We write $X_k = \mathrm{Spec}[A_k(W)]$.

\begin{lemma}\label{alg}
We list some properties of $A_k$ and $Q_k$:
\begin{enumerate}
\item $A_k(W) = Q_{k'}(W)$ for some parameter $k'$. In particular, both $A_k$ and $Q_k$ contain $\c[V]^W$ and are stable under the action of $W$.
\item $A_k$ is a finitely generated graded subalgebra of $\c[V]$, and $Q_k$ is a finitely generated graded module over $A_k$ of rank one.
\item The field of fractions of $A_k$ is $\c(V)$, and the integral closure of $A_k$ in $\c(V)$ is $\c[V]$.
\item $X_k$ is an irreducible affine variety, and the normalization of $X_k$ is $\c^n$.
\item The normalization map $\pi_k: \c^n \to X_k $ is bijective\footnote{The normalization map is bijective as a map of \emph{sets}, but it is not an isomorphism of schemes.}.
\item The preimage of the singular locus of $ X_k $ under $ \pi_k $ is the divisor $ (\A,\,k) $.
\end{enumerate}
\end{lemma}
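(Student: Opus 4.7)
The plan is to treat the six claims in order, using the $W_H$-isotypic decomposition for the algebraic parts (1)--(4) and a local analysis of $\pi_k$ near the arrangement for the geometric parts (5)--(6).

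For (1), decompose $p \in \c[V]$ along each cyclic group $W_H$ as $p = \sum_j e_{H,j}(p)$. Using the identity $e_{H,j}\c[V] \cdot e_{H,\ell}\c[V] \subset e_{H,j+\ell}\c[V]$ together with the key computation $e_{H,-i}(\alpha_H^N f) = \alpha_H^N e_{H,N-i}(f)$, one finds that $pQ_k \subset Q_k$ translates into independent congruences on the components $e_{H,j}(p)$ for each $H \in \A$. Solving them exhibits $A_k$ as $Q_{k'}$ with $k'_{H,j} := \max_i(k_{H,i} - k_{H,i-j})$ (indices mod $n_H$); note $k'_{H,0} = 0$ automatically. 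The necessity direction requires constructing, for each $(i,j)$, a test element $q \in Q_k$ whose $e_{H,-i-j}$-component realizes the minimal divisibility, which can be done by taking the pure monomial $\alpha_H^{n_H k_{H,i+j}+i+j}$ multiplied by a large power of $\delta_H := \prod_{H' \ne H}\alpha_{H'}$ to kill the congruences at other hyperplanes. The $W$-stability of $Q_k$ follows from the $W$-invariance of the multiplicity function (conjugation maps $W_H$ to $W_{w(H)}$ and $\alpha_H$-congruences to $\alpha_{w(H)}$-congruences), and $A_k$ inherits this. The inclusion $\c[V]^W \subset Q_k$ is immediate: every invariant is annihilated by $e_{H,-i}$ for $i \ne 0$, and for $i=0$ the congruence is vacuous because $k_{H,0}=0$.

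For (2)--(4), the sandwich $\c[V]^W \subset A_k \subset Q_k \subset \c[V]$ combined with Theorem~\ref{sct}'s freeness of $\c[V]$ of rank $|W|$ over $\c[V]^W$ makes finite generation of $A_k$ as a $\c$-algebra and $Q_k$ as an $A_k$-module automatic. The decisive step for (3) is showing the conductor $\mathfrak c := \{p \in \c[V] : p\,\c[V] \subset A_k\}$ is nonzero: the same identity $e_{H,-i}(\alpha_H^N f) = \alpha_H^N e_{H,N-i}(f)$ shows that $\delta^N \in \mathfrak c$ whenever $N \geq \max_{H,i} n_H k_{H,i}$. Nonvanishing of $\mathfrak c$ forces $\mathrm{Frac}(A_k) = \c(V)$, and $\c[V]$ -- being normal, module-finite over $A_k$, and sharing its fraction field with $A_k$ -- is the integral closure. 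Irreducibility of $X_k$ follows from $A_k$ being a domain, and $\c^n = \Spec \c[V]$ is the normalization by the integral-closure statement. For the rank-one claim in (2), $Q_k \otimes_{A_k} \c(V)$ is a nonzero submodule of the one-dimensional $\c(V)$-vector space $\c(V)$.

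For (5), surjectivity of $\pi_k$ is immediate from module-finiteness. For injectivity, localize at each $v \in V$: at $v \in V_\reg$ the conductor $\mathfrak c$ contains the unit $\delta^N$, so $(A_k)_{\pi_k(v)} = \c[V]_v$ and $\pi_k$ is a local isomorphism. At $v$ lying on hyperplanes $H_1, \ldots, H_r$, exploit that $A_k = Q_{k'}$ is cut out by \emph{independent} congruences along the $H_i$; after completing at $v$ the analysis decouples hyperplane-by-hyperplane, reducing to a one-variable problem of the type in Example~\ref{1dim} (where $X_k$ is a cuspidal curve with bijective normalization). An induction on the depth of the stratum through $v$, combined with $W$-equivariance to pair distinct Galois translates, then yields bijectivity. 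This induction step is the main technical obstacle: one must verify that overlapping congruences along different hyperplanes through $v$ do not combine to merge distinct preimages, which requires a careful completion-theoretic argument. For (6), once (5) is in hand, the singular locus of $X_k$ is precisely $\pi_k(V(\mathfrak c))$, where $V(\mathfrak c) \subset V$ is the zero scheme of the conductor; the computation of $\mathfrak c$ underlying (3) together with the local decomposition from (5) identifies its primary decomposition as supported exactly on $\cup_{H \in \A} H$, with multiplicity along each $H$ given by the parameters $k_{H,i}$, matching the divisor $(\A,k)$.
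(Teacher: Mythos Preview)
The paper states this lemma without proof (it is lifted from \cite{BC11}), so there is no in-text argument to compare against. Your treatment of (1)--(4) is correct and is the standard route: the isotypic computation for (1), the sandwich $\c[V]^W\subset A_k\subset Q_k\subset\c[V]$ together with finiteness of $\c[V]$ over $\c[V]^W$ for (2), and the conductor element $\delta^N$ for (3)--(4).

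The genuine gap is in (5). Your ``decoupling'' claim is false as stated: at a point $v$ lying on several hyperplanes that are not in general position (e.g.\ the origin for any irreducible $W$ with $\dim V\ge 2$), the completed local picture does \emph{not} factor as a product of one-variable cusp singularities, so the reduction to Example~\ref{1dim} breaks down. What completion actually buys you is a reduction from $(W,\A)$ to the parabolic $(W_v,\A_v)$ at the origin, which is no easier unless $W_v$ has rank one. You flag the induction step as ``the main technical obstacle'' but do not carry it out; as written this is not a proof.

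A cleaner direct argument for bijectivity avoids completion entirely. Set $\delta_v:=\prod_{H\not\ni v}\alpha_H$. For $H\ni v$ one has $W_H\subset W_v$, so $W_H$ fixes $v$ and therefore permutes the factors of $\delta_v$; hence $\delta_v$ is a $W_H$-eigenvector, and $\delta_v^N$ is $W_H$-invariant once $n_H\mid N$. For $H\not\ni v$ one has $\delta_v^N\in\langle\alpha_H\rangle^N$, and this is preserved by $W_H$. Combining these, for $N$ divisible by $|W|$ and large enough one gets $\delta_v^N\cdot\c[V]^{W_v}\subset A_k$. Now let $w\notin W_v$. If $\delta_v(wv)=0$ then $\delta_v^N$ itself separates $v$ from $wv$. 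If $\delta_v(wv)\ne 0$, suppose $\delta_v^N(v)\,g(v)=\delta_v^N(wv)\,g(wv)$ for all $g\in\c[V]^{W_v}$; taking $g=1$ forces the ratio to be $1$, whence $g(v)=g(wv)$ for all such $g$, i.e.\ $wv$ lies in the $W_v$-orbit of $v$, contradicting $w\notin W_v$. Thus $A_k$ separates every $W$-orbit, and combined with $\c[V]^W\subset A_k$ this gives injectivity of $\pi_k$. With (5) established this way, your outline for (6) via the conductor goes through.
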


\subsubsection{$\c W$-valued quasi-invariants}
Our goal now is to show that $Q_k \subset \c[V]$ is preserved by the action of the spherical subalgebra $U_k$. However, this would be impossible to show by direct calculation, since for some complex reflection groups the minimial order of a non-constant $W$-invariant polynomial is 60. We therefore give a definition of quasi-invariants at the level of the Cherednik algebra itself, and then show that symmetrizing the $\c W$-valued quasi-invariants produces the quasi-invariants defined in the previous section.

The algebra $ \D W $ can be viewed as a ring of $W$-equivariant
differential operators on $ \vreg $, and as such it acts naturally
on the space of $ \c W$-valued functions. More precisely, using
the canonical inclusion $\, \c[\vreg] \otimes \c W \into \D W $,
we can identify $\, \c[\vreg] \otimes \c W $ with the cyclic $\D
W$-module $ \D W/J $, where $ J $ is the left ideal of $ \D W $
generated by  $\,
\partial_\xi \in \D W $, $\, \xi \in V \,$. Explicitly, in terms
of generators, $\D W$ acts on $ \c[\vreg] \otimes \c W $ by
\begin{eqnarray}
&& g(f\otimes u) =  gf\otimes u\,,\quad g\in\c[\vreg]\ , \nonumber\\
\la{acts}
&& \partial_\xi(f\otimes u) = \partial_\xi f\otimes u\,,\quad \xi\in V\ ,\\
\nonumber
&& w(f\otimes u)=f^w\otimes wu\,,\quad w\in W\ .
\end{eqnarray}

Now, the restriction of scalars via the Dunkl representation
$\,H_k(W) \into \D W $ makes $ \c[\vreg]\otimes \c W $ an $ H_k(W)
$-module. We will call the corresponding action of $ H_k $ the {\it
differential action}. It turns out that, in the case of integral
$k$'s, the differential action of $ H_k $ is intimately related to
quasi-invariants $Q_k=Q_k(W)$.

Besides the diagonal action \eqref{acts}, we will use another action
of $W$ on $\, \c[\vreg] \otimes \c W $, which is trivial on the
first factor: i.~e., $\, f
\otimes s \mapsto f \otimes w s\,$, where $ w \in W $ and $\, f
\otimes s \in \c[\vreg] \otimes \c W \,$. We denote this action by
$\, 1 \otimes w \,$.

Now, we define $ \QQ_k $ to be the subspace of $\,\c [\vreg] \otimes
\c W $ spanned by the elements $ \varphi $ satisfying
\begin{equation}
\label{qcc} (1 \otimes \e_{H,i})\,\varphi \equiv 0 \ \mbox{mod}\
\langle\alpha_H\rangle^{n_H k_{H,i}} \otimes \c W \ ,
\end{equation}
for all  $\, H \in \A\,$ and $\,i = 0, 1, \ldots, n_H-1 \,$.
Here, as in Definition~\ref{qc}, $\,\langle \alpha_H \rangle\,$
stands for the ideal of $ \c[V] $ generated by  $ \alpha_H $.

\begin{theorem}
\la{Qfat} If $k$ is integral, then  $ \c[\vreg]\otimes \c W  $
contains a unique $H_k$-submodule $ \QQ'_k = \QQ'_k(W) $, such that
$ \QQ'_k $ is finite over $ \c[V] \subset H_k $ and
\begin{equation}
\la{proj} \e \,\QQ'_k = \e (\,Q_k\otimes 1) \quad \mbox{in}\quad
\c[\vreg]\otimes \c W  \ .
\end{equation}
In fact, we have the equality $\QQ_k = \QQ'_k$.
\end{theorem}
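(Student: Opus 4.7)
The plan is to verify that $\QQ_k$ itself possesses the three required properties ($H_k$-stability, finite generation over $\c[V]$, and $\e\QQ_k=\e(Q_k\otimes 1)$), and then to establish uniqueness.

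For $H_k$-stability of $\QQ_k$: stability under $\c[V]\subset H_k$ is immediate, since multiplication by a polynomial commutes with each projection $1\otimes\e_{H,i}$ and with the ideals $\langle\alpha_H\rangle^{n_H k_{H,i}}$. Stability under the diagonal $\c W$-action follows from the equivariances $w\,\e_{H,i}\,w^{-1}=\e_{w(H),i}$ and $w\langle\alpha_H\rangle=\langle\alpha_{w(H)}\rangle$, which merely transport the congruence at $H$ to the one at $w(H)$. The main technical step is stability under the Dunkl operators $T_\xi=\partial_\xi-\sum_{H}\alpha_H(\xi)\alpha_H^{-1}a_H(k)$. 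I would localize at a fixed hyperplane $H_0$ and expand in the coordinate $\alpha_{H_0}$; using the commutation $\e_{H_0,j}\,\alpha_{H_0}=\alpha_{H_0}\,\e_{H_0,j+1}$ in $\c W_{H_0}$, a term-by-term analysis shows that the $\partial_\xi$-derivative loss of one power of $\alpha_{H_0}$ is compensated exactly by the index shift produced when $(1\otimes\e_{H_0,i})$ is applied to the singular term $\alpha_{H_0}^{-1}a_{H_0}(k)\varphi$; contributions from hyperplanes $H\neq H_0$ are regular at $H_0$ and do not disturb the congruence. The calculation essentially reduces to a rank-one check on the cyclic group $W_{H_0}$, compatible with Example~\ref{1dim}.

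The remaining conditions are comparatively routine. Finiteness over $\c[V]$: using the completeness relation $\sum_i\e_{H,i}=1$ in $\c W_H$ together with the defining congruences, one shows $\QQ_k\subseteq\c[V]\otimes\c W$, which is free (hence finitely generated) over $\c[V]$. For the symmetrization identity, apply the $\c[V]^W$-linear isomorphism $(\c[\vreg]\otimes\c W)^W\cong\c[\vreg]$ sending $\sum_w(w\cdot f)\otimes w\mapsto f$. A direct bookkeeping argument translates the condition $(1\otimes\e_{H,i})\varphi\in\langle\alpha_H\rangle^{n_H k_{H,i}}\otimes\c W$ on the $\c W$-valued side into the condition $\e_{H,-i}(F)\in\langle\alpha_H\rangle^{n_H k_{H,i}}$ on the scalar side (the sign flip on $i$ coming from the conjugation implicit in the $W$-equivariance), identifying the image of $\e\QQ_k$ with $Q_k$ and matching $\e(Q_k\otimes 1)$.

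For uniqueness, let $\QQ'_k$ be any submodule with the stated properties. Both $\QQ_k$ and $\QQ'_k$ are $\c[V]$-lattices inside $\c[\vreg]\otimes\c W$ whose $\delta$-localizations recover the entire ambient module (using $H_{\reg}\cong\D W$ from Theorem~\ref{keyprop} and the simplicity of $\D W$). The strategy is to characterize $\QQ_k$ as the \emph{maximal} $H_k$-submodule of $\c[\vreg]\otimes\c W$ that is finite over $\c[V]$ and satisfies $\e M\subseteq\e(Q_k\otimes 1)$: the $H_k$-stability, and specifically the Dunkl stability argument above run in reverse, propagates the boundary constraint on $\e M$ back to the full system of congruences defining $\QQ_k$, forcing $M\subseteq\QQ_k$. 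Applying this to $\QQ'_k$ yields $\QQ'_k\subseteq\QQ_k$, and symmetry of the roles (both modules realize the same $\e$-image) gives equality. The hardest step will be the Dunkl stability verification: it is the technical heart of the theorem, simultaneously explaining why the exponents $n_H k_{H,i}$ appear in Definition~\ref{qc} and supplying the propagation mechanism that underlies the maximality characterization used for uniqueness.
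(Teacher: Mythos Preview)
The paper does \emph{not} actually prove Theorem~\ref{Qfat}: it is stated as a result (taken from \cite{BC11}), followed only by the one-dimensional illustration in Example~\ref{exx} and the corollary on $U_k$-stability of $Q_k$. So there is no ``paper's own proof'' to compare against; I can only assess your outline on its own terms.

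Your existence sketch is reasonable and lines up with the rank-one check the paper does display in Example~\ref{exx}. Stability under $\c[V]$ and the diagonal $W$-action is indeed formal, and your localization-at-$H_0$ strategy for Dunkl stability is the right shape: the identity $\e_{H_0,j}\,\alpha_{H_0}=\alpha_{H_0}\,\e_{H_0,j+1}$ (on the \emph{left} $\c W$-factor, via the action \eqref{acts}) is precisely what makes the index shift compensate the derivative loss. Your finiteness argument is correct: since $\langle\alpha_H\rangle$ is explicitly taken in $\c[V]$ and $\sum_i\e_{H,i}=1$, the congruences already force $\QQ_k\subseteq\c[V]\otimes\c W$. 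The symmetrization computation is sketchy but the mechanism you describe (the sign flip on $i$ coming from conjugation) is the right one.

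Your uniqueness argument, however, has a genuine gap. The maximality characterization gives $\QQ'_k\subseteq\QQ_k$, but your appeal to ``symmetry of the roles'' for the reverse inclusion is not justified: $\QQ'_k$ is an arbitrary submodule with the stated properties, not one cut out by congruences, so you cannot run the maximality argument with the roles exchanged. A clean fix is available inside the paper itself. For integral $k$ the Hecke algebra $\H_k\cong\c W$ is semisimple, so by Theorem~\ref{simp} the algebra $H_k$ is simple; hence $H_k\e H_k=H_k$, and every $H_k$-module $M$ satisfies $M=H_k\cdot(\e M)$. Applying this to both $\QQ_k$ and $\QQ'_k$ gives
\[
\QQ'_k \;=\; H_k\cdot(\e\QQ'_k) \;=\; H_k\cdot\e(Q_k\otimes 1) \;=\; H_k\cdot(\e\QQ_k) \;=\; \QQ_k,
\]
which yields uniqueness directly and renders the maximality detour unnecessary.
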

As a simple consequence of the theorem, we get the following corollary.
\begin{cor}
\la{sp} $\,Q_k \,$ is stable under the action of $\, U_k \,$ on $
\c[\vreg] $ via the Dunkl representation \eqref{HC}.
Thus $ Q_k $ is a $ U_k$-module, with $ U_k $ acting on
$ Q_k $ by invariant differential operators.
\end{cor}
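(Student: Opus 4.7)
The plan is to deduce this corollary directly from Theorem~\ref{Qfat}. By construction, $\QQ'_k$ is an $H_k$-submodule of $\c[\vreg] \otimes \c W$ under the differential action defined by \eqref{acts}, so multiplying by the idempotent $\e \in H_k$ I would obtain that $\e\QQ'_k$ is a $U_k = \e H_k\e$-stable subspace of $\e(\c[\vreg]\otimes \c W)$. By the identity \eqref{proj}, $\e\QQ'_k = \e(Q_k\otimes 1)$, so this first step already gives the $U_k$-stability of $\e(Q_k\otimes 1)$ inside $\e(\c[\vreg]\otimes \c W)$.

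Next I would transfer this to a statement about $Q_k \subset \c[\vreg]$ via the symmetrization map
\[
\phi:\ \c[\vreg] \longrightarrow \e\bigl(\c[\vreg]\otimes\c W\bigr), \qquad f \mapsto \e(f\otimes 1) = \frac{1}{|W|}\sum_{w\in W} f^w \otimes w.
\]
A short computation using \eqref{acts} should show that $\phi$ is a $\c$-linear isomorphism onto the $W$-invariants $(\c[\vreg]\otimes\c W)^W = \e(\c[\vreg]\otimes 1)$, and since $Q_k$ is $W$-stable by Lemma~\ref{alg}\,(1), $\phi$ carries $Q_k$ onto $\e(Q_k\otimes 1)$.

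The crux is to verify that $\phi$ is $U_k$-equivariant, where $U_k$ acts on the source through the spherical Dunkl representation $\Res_k^W:U_k \hookrightarrow \D(\vreg)^W$ of \eqref{HC} and on the target through the inclusion $U_k \subset \e(\D W)\e$. Concretely, for $D \in \D(\vreg)^W$ I would expect
\[
(\e D \e)\cdot \phi(f) \;=\; \e D\cdot \e(f\otimes 1) \;=\; \e\bigl(Df\otimes 1\bigr) \;=\; \phi(Df),
\]
using that $\phi(f)$ is already $W$-invariant (so the leftmost $\e$ in $\e D\e$ acts trivially on it) and that $D$ commutes with each $w \in W$, allowing it to be pulled through the diagonal symmetrization. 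Combined with the algebra isomorphism $\e(\D W)\e \cong \D(\vreg)^W$ from Theorem~\ref{keyprop}\,(3), this intertwining transfers the $U_k$-stability of $\e(Q_k\otimes 1)=\phi(Q_k)$ through $\phi^{-1}$ to $U_k$-stability of $Q_k$, with $U_k$ acting by elements of $\D(\vreg)^W$, as claimed. The main obstacle is the intertwining computation itself: it requires careful bookkeeping with the diagonal $W$-action, the identities $w\e = \e w = \e$ in $\D W$, and the normalization factor $1/|W|$, but everything else is a formal consequence of Theorem~\ref{Qfat} and the definition of the spherical subalgebra.
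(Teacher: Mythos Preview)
Your proposal is correct and follows essentially the same route as the paper's proof. The paper argues directly that $\e L\e = \e\,\Res L$ for $L \in H_k$, and then chains
\[
\e\bigl((\Res L)[Q_k]\otimes 1\bigr) = \e\,\Res L\,[Q_k\otimes 1] = (\e L\e)[\QQ_k] \subseteq \e\QQ_k = \e(Q_k\otimes 1),
\]
concluding by the observation that $\e(f\otimes 1)=0$ forces $f=0$; your map $\phi$ simply names this injection and records the same intertwining identity $(\e D\e)\cdot\phi(f)=\phi(Df)$, so the two arguments are the same up to packaging. (One minor remark: the $W$-stability of $Q_k$ is not actually needed to conclude $\phi(Q_k)=\e(Q_k\otimes 1)$ or its inverse image statement, since injectivity of $\phi$ already does the job.)
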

\begin{proof}
Theorem \ref{Qfat} implies that $\e H_k \e(\e\QQ_k)\subseteq
\e\QQ_k$. Recall that for every element $\e L \e\in\e H_k\e$ we have
$\e L \e=\e\,\Res\,L$, by the definition of the map \eqref{HC}. As a
result,
$$
\e\,(\Res\,L[Q_k]\otimes 1) = \e\,\Res\,L\,[Q_k\otimes 1] = (\e\,
L\, \e)[\QQ_k] \subseteq \e \QQ_k = \e (Q_k\otimes 1)\,.
$$
It follows that $\, (\Res\,L)[Q_k] \subseteq Q_k \,$, since $\,
\e\,(f \otimes 1) = 0\,$ in $\c[\vreg]\otimes \c W$ forces $\,f = 0
\,$.
\end{proof}

\begin{example}\la{exx}
We illustrate Theorem \ref{Qfat} in the one-dimensional case.
Let $\,W=\Z/n\Z \,$ and $\, k = (k_0, \dots, k_{n-1})$ be as in
Example \ref{1dim}. Then
\begin{equation}\la{nc11}
\QQ_k=\bigoplus_{i=0}^{n-1} x^{nk_i}\c[x]\otimes\e_{i}\ ,\quad
\,\e_i=\frac{1}{n}\sum_{w\in W}(\det w)^{-i} w\,.
\end{equation}
Clearly, $ \QQ_k $ is stable under the
action of $W$ and $\c[x]$. On the other hand, if $\, k_i \in \Z\,$,
a short calculation shows that the Dunkl operator $T:=\partial_x-x^{-1}\sum_{i=0}^{n-1} n k_{i} \e_{i}\,$
annihilates the elements $x^{nk_i}\otimes\e_{i}$, and hence preserves $ \QQ_k $ as well.
Now, acting on $ \QQ_k $ by $\e=\e_0$ and using \eqref{exq}, we get
\begin{equation}\la{enc1}
\e\QQ_k=\bigoplus_{i=0}^{n-1}\, x^{nk_i+i}\c[x^n]\otimes\e_{i}= \bigoplus_{i=0}^{n-1}\, 
\e\,(x^{nk_i+i}\c[x^n]\otimes 1) =
\e\,(Q_k\otimes 1)\ ,
\end{equation}
which agrees with Theorem~\ref{Qfat}.
\end{example}

\subsubsection{Differential operators on quasi-invariants}
We briefly recall the definition of differential operators in the
algebro-geometric setting (see \cite{MR01}, Chap.~15).

Let $ A $ be a commutative algebra over $ \c $, and let $ M $ be
an $A$-module. The filtered ring of (linear) differential
operators on $ M $ is defined by
$$
\D_A(M) := \bigcup_{n\ge 0}\,\D_A^n(M) \,\subseteq \,\End_{\c}(M)\
,
$$
where $\, \D^{0}_A(M) := \End_A(M) \,$ and $\, \D^n_A(M) \,$, with
$ n\ge 1 $, are given inductively:
$$
\D_A^n(M) := \{D \in \End_{\c}(M)\ |\ [\,D,\, a\,] \in
\D_A^{n-1}(M) \ \text{for all}\ a \in A\}\ .
$$
The elements of $ \D^n_A(M)\setminus \D^{n-1}_A(M) $ are called
{\it differential operators of order $ n $} on $ M $. Note that
the commutator of two operators in $\D_A^n(M) $ of orders $ n $
and $ m $ has order at most $ n+m-1 $. Hence the associated graded
ring $\,\grd\,\D_A(M) := \bigoplus_{n\ge
0}\D_A^n(M)/\D_A^{n-1}(M)\,$ is a commutative algebra.

If $ X $ is an affine variety with coordinate ring $ A =\O(X) $,
we denote $ \D_A(A) $ by $ \D(X) $ and call it the {\it ring of
differential operators on $ X $}. If $ X $ is irreducible, then
each differential operator on $ X $ has a unique extension to a
differential operator on $ \k := \c(X) $, the field of rational
functions of $ X $, and thus we can identify (see \cite{MR01},
Theorem~15.5.5):
\begin{equation*}
\D(X) = \{\, D \in \D(\k)\ |\ D(f) \in \O(X) \ \text{for all} \ f
\in \O(X) \,\} \ .
\end{equation*}
Slightly more  generally, we have
\begin{lemma}
\la{dm} Suppose that $\, M \subseteq \k $ is a (nonzero)
$A$-submodule of $\, \k $. Then
\begin{equation*}
\D_A(M) = \{\, D \in \D(\k)\ |\ D(f) \in M \ \text{\rm for all} \
f \in M \,\} \ .
\end{equation*}
\end{lemma}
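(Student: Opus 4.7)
The plan is to prove the two inclusions separately. The containment $\supseteq$ is routine: if $D \in \D(\k)$ satisfies $D(M) \subseteq M$, then $D|_M$ is a $\c$-linear endomorphism of $M$, and for each $a \in A \subseteq \k$ the commutator $[D,a] \in \D^{n-1}(\k)$ also preserves $M$ (since both $D$ and multiplication by $a$ do), so by induction on the order $n$ of $D$ we get $D|_M \in \D_A^n(M)$.

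For the harder inclusion $\subseteq$, the crucial structural input is that $\k$ is a localization of $M$. Since $M$ is nonzero, choosing $0 \neq m_0 \in M$ and writing $m_0 = p_0/q_0$ with $p_0, q_0 \in A$ produces a nonzero element $p_0 = q_0 m_0 \in M \cap A$. Multiplying numerator and denominator of any $p/q \in \k$ by such an element exhibits every element of $\k$ in the form $m/a$ with $m \in M$ and $a \in S := A \setminus \{0\}$, so $\k = S^{-1}M$ and the canonical map $M \otimes_A \k \to \k$ is an isomorphism.

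Given $D \in \D_A^n(M)$, the idea is to extend it to $\tilde D := D \otimes_A \mathrm{id}_\k$, a well-defined $\c$-linear endomorphism of $M \otimes_A \k \cong \k$ that restricts to $D$ on $M$. Since $[\tilde D, a] = \widetilde{[D,a]}$ for $a \in A$, induction on $n$ (with base case: any $A$-linear endomorphism of a nonzero $A$-submodule $M \subseteq \k$ is multiplication by some scalar in $\k$, proved by using $qm_1 = pm_2$ for $m_1/m_2 = p/q$) shows $[\tilde D, a] \in \D^{n-1}(\k)$ for every $a \in A$. To upgrade this commutator condition from $A$ to all of $\k$, the Leibniz identity $0 = [\tilde D, a \cdot a^{-1}] = [\tilde D, a]\, a^{-1} + a\,[\tilde D, a^{-1}]$ expresses $[\tilde D, a^{-1}]$ as a lower-order operator, and the product rule then propagates this to arbitrary $f \in \k$. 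Uniqueness of the extension, which gives bijectivity of the restriction map onto the right-hand side, comes from the fact that any $D \in \D^n(\k)$ vanishing on $M$ must itself be zero: in order zero, multiplication by a scalar $c \in \k$ killing the nonzero $M$ forces $c = 0$; for higher orders the commutators $[D,a]$ still vanish on $M$ and have strictly lower order. The main technical obstacle is justifying that $D \otimes_A \mathrm{id}_\k$ is well-defined as a $\c$-linear endomorphism of $\k$ under the identification $M \otimes_A \k \cong \k$, and patiently tracking the filtered differential structure through the localization; once this is in hand, the argument is a routine generalization of the classical case $M = A$ treated in \cite{MR01}, Theorem~15.5.5.
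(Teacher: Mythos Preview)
The paper does not actually prove this lemma: it is stated without proof as a ``slightly more general'' version of the standard fact for $M=A$ (\cite{MR01}, Theorem~15.5.5), and the text immediately proceeds to apply it. So there is nothing to compare against, and your write-up is already more than the paper offers.

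Your outline is the standard argument and is essentially correct, but one piece of notation should be fixed. Writing the extension as $\tilde D := D \otimes_A \mathrm{id}_\k$ is not literally meaningful: $D$ is only $\c$-linear, not $A$-linear, so the tensor $D \otimes_A \mathrm{id}_\k$ is not a well-defined map on $M \otimes_A \k$. You acknowledge this is ``the main technical obstacle,'' but the notation suggests a construction that does not exist. The honest construction is the inductive one: for $D \in \D_A^0(M) = \End_A(M)$ the extension is clear (as you note, such a $D$ is multiplication by a fixed element of $\k$); for $D \in \D_A^n(M)$ with $n \ge 1$, assuming $\widetilde{[D,a]}$ is already defined for all $a \in A$, set
\[
\tilde D(m/a) \;:=\; a^{-1}\bigl(D(m) - \widetilde{[D,a]}(m/a)\bigr)\,,\qquad m \in M,\ a \in A\setminus\{0\}\,,
\]
and check this is independent of the representation $m/a$. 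This is exactly what is done in \cite{MR01} for $M=A$, and the same computation goes through here because, as you correctly observe, $\k = S^{-1}M$. Once you replace the tensor notation by this recursive formula (or simply cite \cite{MR01} for it), the rest of your argument --- the induction showing $\tilde D \in \D^n(\k)$, the Leibniz trick for inverses, and the uniqueness step --- is fine.
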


We apply these concepts for $\, A = A_k \,$ and
$\, M = Q_k \,$, writing $ \D(Q_k) $ instead of $ \D_{A}(M) $ in this case.
By Lemma~\ref{alg}(3), $\,X_k = \Spec(A_k) \,$ is an
irreducible variety with $ \k = \c(V) $, so, by Lemma~\ref{dm}, we
have
\begin{equation}
\la{tw} \D(Q_k) = \{D\in \D(\k)\,|\,D(f)\subseteq Q_k\ \text{\rm
for all} \ f \in Q_k \,\}\ .
\end{equation}
Note that the differential filtration on $ \D(Q_k) $ is induced from
the differential filtration on $ \D(\k) $. Thus \eqref{tw} yields
a canonical inclusion $\,\grd\,\D(Q_k) \subseteq \grd\,\D(\k)\,$,
with $\, \D^0(Q_k) = A_k \,$, see \eqref{acc}. In particular, if
$\, k = \{0\} \,$, then $ Q_k = \c[V] $ and \eqref{tw} becomes the
standard realization of $\,\D(V)\,$ as a subring of $ \D(\k)\,$.

Apart from $ Q_k$, we may also apply Lemma~\ref{dm} to $ \c[\vreg]
$, which is naturally a subalgebra of $ \k = \c(V) $. This gives
the identification
\begin{equation}
\la{tw1} \D(\vreg) = \{D\in \D(\k)\,|\,D(f)\subseteq \c[\vreg] \
\text{\rm for all} \ f \in \c[\vreg] \,\} \ .
\end{equation}
\begin{lemma}
\la{symbol} With identifications \eqref{tw} and \eqref{tw1}, we
have
$$
(\mathsf i)\ \D(Q_k)\subseteq \D(\vreg) \quad  \text{and}\quad
(\mathsf{ii})\ \grd\,\D(Q_k)\subseteq \grd\,\D(V)\ .
$$
\end{lemma}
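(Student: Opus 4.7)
My plan is to prove (i) via a standard localization argument and (ii) in two stages: an iterated-commutator calculation that recovers (i) on the graded side, followed by a local analysis at each reflection hyperplane that promotes the containment from $\c[\vreg]$ to $\c[V]$.

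For (i), the key observation is that both $Q_k$ and $A_k$ localize to $\c[\vreg]$ upon inverting $\delta$: the quasi-invariance congruences of Definition~\ref{qc} become vacuous once each $\alpha_H$ is a unit, and the parallel statement for $A_k$ follows from Lemma~\ref{alg}. By the standard principle that a differential operator on an $A$-module $M$ extends uniquely to any multiplicative localization of $M$, every $D \in \D_{A_k}(Q_k)$ extends uniquely to an operator on $\c[\vreg]$. Uniqueness forces this extension to coincide with the action of $D \in \D(\k)$ on $\c[\vreg] \subset \k$, giving $\D(Q_k) \subseteq \D(\vreg)$.

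For (ii), I would start with the iterated-commutator identity: for $D \in \D^n(Q_k)$ and $a_1, \ldots, a_n \in A_k$, the $n$-fold commutator $[\cdots[[D, a_1], a_2], \ldots, a_n]$ lies in $\D^0(Q_k) = \End_{A_k}(Q_k) = A_k$, the last equality because $Q_k$ is a rank-one $A_k$-submodule of $\c(V)$ with $(Q_k : Q_k) = A_k$, and a direct computation shows that it equals $n!\,\sigma_n(D)(da_1, \ldots, da_n)$. Specializing the $a_i$ to the fundamental $W$-invariants $f_1, \ldots, f_n \in \c[V]^W \subseteq A_k$ and inverting the Jacobian matrix $(\partial_j f_i)$ over $\c[V][J^{-1}]$, where $J := \det(\partial_j f_i) = \prod_H \alpha_H^{n_H - 1}$ has the same radical as $\delta$, I recover the coefficients of $\sigma_n(D)$ in the standard basis of $S^n(V)$ as elements of $A_k[J^{-1}] \subseteq \c[\vreg]$. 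This recovers (i) on associated graded but is strictly weaker than (ii).

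To upgrade from $\c[\vreg]$ to $\c[V]$, I would localize at each height-one prime $\langle \alpha_H \rangle$ and reduce to a one-dimensional analysis for the cyclic subgroup $W_H$ acting in the normal direction to $H$. The crucial input is that $Q_k$ always contains, in addition to the $W_H$-invariant element $\alpha_H^{n_H}$, low-degree test elements $\alpha_H^{n_H k_{H,i} + i}$ for $i = 1, \ldots, n_H - 1$ that transform under nontrivial characters of $W_H$ and lie outside $A_k$. A direct calculation in the local DVR shows that if a leading symbol coefficient of $D$ had any pole along $\alpha_H$, then $D$ applied to one of these low-degree test elements would produce terms of too-small $\alpha_H$-valuation to lie in $Q_k \subseteq \c[V]$, contradicting $D(Q_k) \subseteq Q_k$. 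Accumulating this no-pole conclusion over all $H \in \A$ yields $\sigma_n(D) \in \c[V] \otimes S^n(V)$.

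The hard part is the last step: the $A_k$-commutator argument alone only determines the symbol modulo powers of $J$, and the essential extra input---applying $D$ to elements of $Q_k \setminus A_k$, where commutator methods do not apply symmetrically---requires a careful local reduction to the one-dimensional cyclic case.
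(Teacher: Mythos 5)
The paper states this lemma without proof (it is quoted from \cite{BC11}), so I am comparing your proposal against the argument given there. Your proof of $(\mathsf i)$ is correct and is the standard one: since $\delta^N\c[V]\subseteq Q_k\subseteq\c[V]$ for $N\gg 0$, both $Q_k$ and $A_k$ localize to $\c[\vreg]$ at $\delta$, and a differential operator extends uniquely to a localization of the module, the extension necessarily agreeing with the given action on $\k$. Your architecture for $(\mathsf{ii})$ --- noting that iterated commutators with the fundamental invariants only pin down $\sigma_n(D)$ up to powers of $J=\prod_H\alpha_H^{n_H-1}$, then localizing at the generic point of each $H$ and probing with the low-degree non-invariant elements of $Q_k$ --- is also the route taken in \cite{BC11} (and in \cite{BEG03} for Coxeter groups), and you have correctly identified that the elements of $Q_k\setminus A_k$ are the indispensable extra input.

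The gap is that the decisive step of $(\mathsf{ii})$ is asserted rather than carried out, and the mechanism as you state it would not close. You write that a pole in a leading coefficient would produce ``terms of too-small $\alpha_H$-valuation to lie in $Q_k\subseteq\c[V]$,'' which reads as if you will only use $D(q)\in\c[V]$, i.e.\ non-negativity of valuation. That is not sufficient. Take $W=\Z/2\Z$, $k=1$, so $Q_k=\c\oplus x^2\c[x]$ as in Example~\ref{1dim}, and $D=a\,\partial+b$ with $D(Q_k)\subseteq Q_k$. From $D(1)=b$ and $D(x^2)=2ax+bx^2$ one gets $\nu(a)\ge -1$; from $D(x^3)\in\c[x]$ one gets only the weaker $\nu(a)\ge -2$; the residual simple pole of $a$ is killed only by the finer congruence $D(x^3)\in Q_k$, namely that its degree-one isotypic component vanishes. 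So the local analysis must use the full valuation-by-isotypic-component conditions defining $(Q_k)_{\langle\alpha_H\rangle}$, not mere membership in $\c[V]$. A second difficulty you do not address: the lower-order coefficients of $D$ genuinely carry poles (e.g.\ $\partial^2-2x^{-1}\partial\in\D(Q_1)$ for $W=\Z/2\Z$), so when $D$ is applied to a low-degree test element the polar contributions of $\sigma_n(D)$ and of the lower-order terms are entangled; isolating the leading coefficient requires either an induction on the order of $D$ (subtracting an operator realizing the putative polar part of the symbol) or a Lagrange-interpolation argument over enough exponents in $\bigcup_i\,(n_Hk_{H,i}+i+n_H\Z_{\ge 0})$, neither of which is set up in your sketch. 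In short: right reduction, right test elements, but the rank-one computation that constitutes the actual content of $(\mathsf{ii})$ is still missing.
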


\subsubsection{Invariant differential operators}

Recall that, by Lemma~\ref{alg}, $\, Q_k \,$ is stable under the
action of $W$ on $ \c[\vreg] $. Hence $ W $ acts naturally on $
\D(Q_k) $, and this action is compatible with the inclusion of
Lemma~\ref{symbol}($\mathsf i$). It follows that $\, \D(Q_k)^W
\subseteq \D(\vreg)^W\,$. Now, we recall the algebra embedding \eqref{HC},
which defines the Dunkl representation for the spherical subalgebra
of $ H_k $.
\begin{prop}
\label{is} The image of $\,\Res: \,U_k \into \D(\vreg)^W$  coincides with
$ \D(Q_k)^W $. Thus, the Dunkl representation of $ U_k $ yields an algebra
isomorphism $\,U_k \cong \D(Q_k)^W$.
\end{prop}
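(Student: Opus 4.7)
The plan is to establish both inclusions $\Res(U_k) \subseteq \D(Q_k)^W$ and $\D(Q_k)^W \subseteq \Res(U_k)$ inside $\D(V_\reg)^W$. The first is immediate from Corollary~\ref{sp}: the spherical Dunkl representation $\Res:U_k\hookrightarrow \D(V_\reg)^W$ sends $U_k$ into the set of operators preserving $Q_k$, and combining this with the characterization \eqref{tw} of $\D(Q_k)$ and the manifest $W$-equivariance of $\Res$ yields $\Res(U_k) \subseteq \D(Q_k)^W$. The real content of the proposition is the reverse inclusion, which I would prove by a principal-symbol argument in the differential filtration.

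For this, I would invoke the theorem from Section~2.3 stating that $\grd U_k \cong \c[V\oplus V^*]^W$ (a consequence of the PBW property together with $U_k$ being the universal deformation of $\D(V)^W$). Since the differential filtration on $H_k$ is carried by the Dunkl embedding to the order filtration on $\D W$ (because $T_\xi = \partial_\xi + (\text{order zero})$), the associated graded of $\Res(U_k)$, computed inside $\grd\D(V_\reg) = \c[V_\reg\oplus V^*]$, is precisely $\c[V\oplus V^*]^W$. On the other hand, Lemma~\ref{symbol}($\mathsf{ii}$) gives $\grd\D(Q_k)\subseteq \grd\D(V) = \c[V\oplus V^*]$, so taking $W$-invariants yields $\grd\D(Q_k)^W \subseteq \c[V\oplus V^*]^W$. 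Composing with the forward inclusion produces the sandwich
\[
\c[V\oplus V^*]^W = \grd\Res(U_k) \subseteq \grd\D(Q_k)^W \subseteq \c[V\oplus V^*]^W,
\]
forcing equality on associated graded rings.

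The equality at the graded level lifts to the filtered level by the usual induction on order: given $D\in\D(Q_k)^W$ of order $n$, choose $U\in \Res(U_k)$ of order $\leq n$ with the same principal symbol (possible by the equality of graded pieces just obtained); then $D - U\in\D(Q_k)^W$ has strictly smaller order and, by induction, lies in $\Res(U_k)$, so $D$ does as well. The base case $n=0$ reduces to verifying that $\D^{0}(Q_k)^W = A_k^W = \c[V]^W$ coincides with the order-zero part of $\Res(U_k)$, which is straightforward from the definition \eqref{acc} of $A_k$ together with the fact that $1 \in Q_k$. The main sticking point I anticipate is to ensure that $\grd\Res(U_k)$ is realized \emph{inside} $\grd\D(V_\reg)$ precisely as the subring $\c[V\oplus V^*]^W$, and not merely isomorphic to it abstractly; this requires combining the PBW decomposition $H_k = \c[V]\otimes\c W\otimes \c[V^*]$ with the explicit leading-term formula for Dunkl operators so that the principal-symbol map and the abstract associated-graded map agree on $U_k$.
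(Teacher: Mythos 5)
Your argument is correct and is essentially the one the paper intends (the notes defer the proof to \cite{BC11}, but Corollary~\ref{sp}, Lemma~\ref{symbol}($\mathsf{ii}$) and the identification $\grd\,U_k\cong\c[V\oplus V^*]^W$ are set up precisely as the ingredients of this symbol-sandwich argument): Corollary~\ref{sp} gives $\Res(U_k)\subseteq\D(Q_k)^W$, and the equality $\grd\,\Res(U_k)=\c[V\oplus V^*]^W\supseteq\grd\,\D(Q_k)^W$ forces equality by the standard filtered induction. Your closing caveat is the right one to flag, and it is resolved exactly as you say, via the PBW decomposition together with $T_\xi=\partial_\xi+(\text{order }0)$, so that the induced order filtration on $U_k$ agrees with its differential filtration and the symbol map lands onto $\c[V\oplus V^*]^W$ concretely inside $\c[V_\reg\oplus V^*]$.
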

Proposition~\ref{is} explains the remark at the end of 
Section~\ref{heckmansargumentsection}.
\begin{cor}\la{grcof}
$\ \grd\,\D(V) \,$ is a finite module over
$\,\grd\,\D(Q_k) \,$. Consequently $\,\grd\,\D(Q_k)\,$
is a finitely generated (and hence, Noetherian) commutative $\c$-algebra.
\end{cor}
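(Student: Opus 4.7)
The plan is to sandwich $\grd \D(Q_k)$ between two explicit commutative algebras whose relationship is completely controlled by classical invariant theory. By Proposition \ref{is}, the spherical Dunkl representation gives a filtered algebra isomorphism $U_k \cong \D(Q_k)^W$, where $U_k$ carries the differential filtration inherited from $H_k \hookrightarrow \D W$. Passing to associated gradeds and using the earlier identification $\grd U_k \cong \c[V \oplus V^*]^W$, together with the inclusion $\grd \D(Q_k) \subseteq \grd \D(V) \cong \c[V \oplus V^*]$ from Lemma \ref{symbol}(ii), I obtain the chain of graded $\c$-algebra inclusions
\[
\c[V \oplus V^*]^W \;\subseteq\; \grd \D(Q_k) \;\subseteq\; \grd \D(V) \cong \c[V \oplus V^*].
\]

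From this chain both claims fall out. For the first, classical invariant theory says that $\c[V \oplus V^*]$ is a finitely generated module over $\c[V \oplus V^*]^W$ (as $W$ acts linearly on the finite-dimensional space $V \oplus V^*$). Hence $\grd \D(V)$ is a fortiori finite as a module over the intermediate ring $\grd \D(Q_k)$. For the second, I would apply the Artin--Tate lemma to $\c \subseteq \grd \D(Q_k) \subseteq \grd \D(V)$: the outer ring $\grd \D(V) \cong \c[V \oplus V^*]$ is a finitely generated $\c$-algebra and, by the first claim, finite as a $\grd \D(Q_k)$-module, so $\grd \D(Q_k)$ is itself a finitely generated $\c$-algebra. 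Hilbert's basis theorem then yields Noetherianness.

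The main subtle point I expect is the compatibility identification $\grd(\D(Q_k)^W) = (\grd \D(Q_k))^W$, which is what allows me to interpret $\grd U_k = \c[V \oplus V^*]^W$ as a subring of $\grd \D(Q_k)$ rather than as an abstract graded algebra. This equality is not automatic for filtered algebras carrying a group action, but it holds here because $|W|$ is invertible in $\c$: the Reynolds projector $\pi = |W|^{-1}\sum_{w \in W} w$ preserves the differential filtration, and exactness of $\pi$ implies that taking $W$-invariants commutes with the passage to associated graded. Once this compatibility is in hand, the rest of the argument is standard commutative-algebra bookkeeping.
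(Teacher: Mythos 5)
Your argument is correct and is essentially the one the paper intends: the corollary is stated as a consequence of Proposition~\ref{is} and Lemma~\ref{symbol}, and the expected proof is precisely the sandwich $\c[V\oplus V^*]^W \cong \grd\, U_k = (\grd\,\D(Q_k))^W \subseteq \grd\,\D(Q_k) \subseteq \c[V\oplus V^*]$ followed by Noether finiteness and the Artin--Tate lemma. You also correctly isolate and justify the only delicate point, namely that passing to associated graded commutes with taking $W$-invariants (via the Reynolds operator) and that all identifications are compatible with the differential filtration.
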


We are now in a position to state some of the main results of \cite{BC11}.
The first theorem can be viewed as a generalization of the Chevalley-Serre-Shephard-Todd
Theorem ({\it cf.} Theorem~\ref{sct}, $\,(1)\,\Rightarrow\,(2)\,$).
\begin{theorem}[\cite{BC11}, Theorem~1.1]
\label{mattt}
$ Q_k $ is a free module over $ \c[V]^W \! $ of rank $|W|$.
\end{theorem}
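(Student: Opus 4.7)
The plan is to reduce the statement to a freeness assertion for the $\c W$-valued quasi-invariants $\QQ_k$ of Theorem~\ref{Qfat}, and then to exploit the $H_k$-action on $\QQ_k$. By Shephard-Todd-Chevalley (Theorem~\ref{sct}), $\c[V]^W$ is a polynomial algebra, and $Q_k$ is a non-negatively graded, finitely generated $\c[V]^W$-module (using Lemma~\ref{alg}(2) together with the fact that $A_k \supseteq \c[V]^W$ is module-finite). By Lemma~\ref{NL} it therefore suffices to show that $Q_k$ is projective over $\c[V]^W$. The free rank is forced to be $|W|$: $Q_k$ has rank one over $A_k$ by Lemma~\ref{alg}(2), while $A_k$ and $\c[V]$ share the same field of fractions $\c(V)$ by Lemma~\ref{alg}(3), hence the same $\c[V]^W$-rank, which equals $|W|$.

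To gain leverage, I would lift the question to the $\c W$-valued quasi-invariants $\QQ_k$. Equation~\eqref{proj} shows that $f \mapsto \e(f\otimes 1)$ is an isomorphism $Q_k\xrightarrow{\sim}\e\QQ_k$ of graded $\c[V]^W$-modules, and since $\e\in \c W$ is idempotent, $\e\QQ_k$ is a $\c[V]^W$-direct summand of $\QQ_k$. So it suffices to show $\QQ_k$ is projective over $\c[V]^W$, which (since $\c[V]$ is itself free over $\c[V]^W$ by Theorem~\ref{sct}) would follow from $\QQ_k$ being free, equivalently projective, as a graded $\c[V]$-module.

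The core of the argument is then to identify $\QQ_k$ as an $H_k$-module with the standard module $M_k(\c W) := H_k\otimes_{\c[V^*]\rtimes W}\c W$ associated to the regular representation of $W$. The PBW property (Theorem~\ref{keyprop}(2)) gives $M_k(\c W)\cong \c[V]\otimes \c W$ as a $\c[V]$-module, which is free of rank $|W|$, so such an identification finishes the proof. To construct the comparison map, I would produce inside $\QQ_k$ a $W$-stable ``vacuum'' subspace $\tau$, isomorphic to the regular representation, on which every Dunkl operator $T_\xi$ vanishes; the one-dimensional model in Example~\ref{exx} (with generators $x^{n k_i}\otimes \e_{i}$) suggests forming $\tau$ from products of powers $\alpha_H^{n_H k_{H,j}}$ paired with the primitive idempotents $\e_{H,j}$. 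The universal property of $M_k(\c W)$ then yields an $H_k$-morphism $\varphi:M_k(\c W)\to \QQ_k$; injectivity can be checked after Dunkl-localization (both sides become $\c[\vreg]\otimes \c W$ as $\D W$-modules), and surjectivity follows from the uniqueness clause in Theorem~\ref{Qfat}, since the image is an $H_k$-submodule of $\c[\vreg]\otimes \c W$ that is finite over $\c[V]$ and satisfies \eqref{proj}. The main obstacle is the explicit construction of the vacuum subspace $\tau$: verifying that the candidate generators simultaneously lie in $\QQ_k$ (i.e.\ satisfy the congruences \eqref{qcc}) and are killed by every $T_\xi$ requires a delicate calculation balancing the cross-terms $\alpha_H(\xi)/\alpha_H$ in the Dunkl operators against the idempotent identities inside the cyclic groups $\c W_H$.
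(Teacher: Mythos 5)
Your route is genuinely different from the paper's. The paper makes no attempt at a direct argument: it deduces Theorem~\ref{mattt} from the much deeper Theorem~\ref{ma} (simplicity of $\D(Q_k)$ and Morita equivalence with $\D(V)$), and the authors explicitly say they know no generalization of the homological argument of Section~\ref{frg}. Your outer reductions are all sound: $Q_k$ is a finitely generated non-negatively graded $\c[V]^W$-module, so by Lemma~\ref{NL} projectivity suffices; the rank count via Lemma~\ref{alg}(2),(3) is correct; $f\mapsto \e(f\otimes 1)$ identifies $Q_k$ with $\e\,\QQ_k$ as graded $\c[V]^W$-modules by \eqref{proj}, and $\e$ is a $\c[V]^W$-linear idempotent on $\QQ_k$, so $\e\,\QQ_k$ is a direct summand. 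Everything therefore rides on $\QQ_k$ being free (or just projective) over $\c[V]$, which you propose to get from an isomorphism $\QQ_k\cong M(\c W)$.

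That central step, as you describe it, has a genuine gap. The ``vacuum subspace'' does not exist in the form suggested by Example~\ref{exx} once $\dim V>1$: applying $T_\xi$ to $\alpha_H^{n_Hk_{H,j}}\otimes u$ produces cross-terms $\alpha_{H'}(\xi)/\alpha_{H'}$ from every \emph{other} hyperplane $H'$, and these have no reason to vanish against the idempotents of $\c W_H$; moreover the $\e_{H,j}$ for distinct $H$ do not commute and the index set $\prod_H\{0,\dots,n_H-1\}$ has the wrong cardinality, so the candidate generators are not even well defined. The surjectivity argument is also circular: the uniqueness clause of Theorem~\ref{Qfat} singles out $\QQ_k$ among submodules $N$ with $\e N=\e(Q_k\otimes 1)$, and verifying that \emph{equality} for the image of your map is essentially the original problem. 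The good news is that the identification with $M(\c W)$ is not needed. By Theorem~\ref{Qfat}, $\QQ_k$ is an $H_k$-submodule of $\c[\vreg]\otimes\c W$ that is finite over $\c[V]$; it is graded (the conditions \eqref{qcc} are homogeneous) with degrees bounded below, and the Dunkl operators lower degree by one, so they act locally nilpotently and $\QQ_k\in\O_k$. For integral $k$ one has $\H_k\cong\c W$, so by Theorem~\ref{simp} the category $\O_k$ is semisimple with simple objects the standard modules $M(\tau)=L(\tau)$, each free over $\c[V]$ by the PBW property (Theorem~\ref{keyprop}(2)). Hence $\QQ_k$ is automatically a finite direct sum of standard modules, in particular free over $\c[V]$ and hence over $\c[V]^W$, which is all your argument needs. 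With that substitution your proof closes and gives a more self-contained route than the one through Theorem~\ref{ma} --- though the real weight is still carried by Theorems~\ref{Qfat} and \ref{simp}.
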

It would be nice to have a direct proof of this theorem
generalizing the homological arguments presented in Section~\ref{frg}. 
Unfortunately, we are not aware of such a generalization. Instead,
Theorem~\ref{mattt} is deduced as a consequence of the following much deeper 
result on the algebra of differential operators on quasi-invariants.
\begin{theorem}[\cite{BC11}, Theorem~1.2]
\label{ma}
$\D(Q_k)$ is a simple ring, Morita equivalent to $\D(V)$.
\end{theorem}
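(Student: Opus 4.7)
The strategy splits into proving simplicity of $\D(Q_k)$ and constructing the Morita equivalence with $\D(V)$.

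For simplicity, I would combine Proposition~\ref{is} with Theorem~\ref{simp}. Since $k$ is integral, the defining relations of $\H_k(W)$ collapse to $\sigma_H^{n_H} = 1$, so $\H_k(W) \cong \c W$ is semisimple; Theorem~\ref{simp} then forces $H_k$ to be simple and Morita equivalent to its spherical subalgebra $U_k$. Via Proposition~\ref{is}, $U_k \cong \D(Q_k)^W$, so $\D(Q_k)^W$ is simple. To descend to $\D(Q_k)$ itself, I would form the crossed product $B := \D(Q_k) \rtimes W$ (using the $W$-action inherited from $\D(Q_k) \hookrightarrow \D(V_\reg)$): then $\e B \e = \D(Q_k)^W$, and I would check that $\e$ is a full idempotent in $B$, so that $B \sim_{\mathrm{Morita}} \D(Q_k)^W$ is simple. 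A standard Galois-descent argument for finite-group crossed products in characteristic zero then extracts simplicity of $\D(Q_k)$.

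For the Morita equivalence with $\D(V)$, the key technical input is the $H_k$-module $\QQ_k$ of Theorem~\ref{Qfat}. Because $\QQ_k[\delta^{-1}] = \c[V_\reg] \otimes \c W$, it carries simultaneously a left action of the Dunkl image of $H_k$ and a right action of $\D(Q_k) \rtimes W$, and these commute inside $\End(\c[V_\reg] \otimes \c W)$. My plan is to show that $\QQ_k$ is a projective generator in $\mod(H_k)$ with endomorphism algebra $\End_{H_k}(\QQ_k) \cong (\D(Q_k) \rtimes W)^{op}$, realizing a Morita equivalence $H_k \sim \D(Q_k) \rtimes W$. Passing to the spherical side via the full idempotent $\e$ gives $U_k \sim \D(Q_k)^W$, which is Proposition~\ref{is} promoted to a Morita statement. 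Specializing the construction to $k=0$, where $H_0 = \D(V) \rtimes W$ and $\D(Q_0) = \D(V)$, and comparing with the general case, one extracts the desired Morita equivalence $\D(Q_k) \sim \D(V)$.

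The main obstacle is the identification $\End_{H_k}(\QQ_k) \cong (\D(Q_k) \rtimes W)^{op}$. One direction, that every element of $\D(Q_k) \rtimes W$ gives rise to an $H_k$-linear endomorphism of $\QQ_k$ by right multiplication, should follow from the compatibility of both actions inside $\D W$. The converse is the crux: one must show that every $H_k$-endomorphism of $\QQ_k$ arises by right multiplication from $\D(Q_k) \rtimes W$. I would approach this by localizing at $\delta$, where the endomorphism algebra becomes $\End_{\D W}(\c[V_\reg] \otimes \c W) \cong \D W$, and then verifying that any such endomorphism preserves the defining quasi-invariance conditions \eqref{qcc}; this reduces to a hyperplane-local calculation using the idempotents $\e_{H,i}$ and the $[\xi,x]$-relations in $H_k$.
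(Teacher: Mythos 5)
You should first note that these lecture notes do not actually prove Theorem~\ref{ma}: it is quoted from \cite{BC11}, and the surrounding text only hints at the mechanism (in particular, that the Morita equivalence is implemented by the bimodule $\P=\{D\in\D(\k):D(\c[V])\subseteq Q_k\}$ and its transpose). So your proposal must stand on its own. Its first half is essentially right and matches the actual starting point: for integral $k$ one has $\H_k\cong\c W$ semisimple, hence $H_k$ simple and Morita equivalent to $U_k$ by Theorem~\ref{simp} and its corollary, hence $\D(Q_k)^W\cong U_k$ simple by Proposition~\ref{is}. Your ``Galois descent'' of simplicity from $\D(Q_k)\rtimes W$ to $\D(Q_k)$ is not standard as stated --- it fails for general rings, e.g.\ $(\c\times\c)\rtimes\Z/2\Z\cong M_2(\c)$ is simple while $\c\times\c$ is not --- but it can be repaired here because $\D(Q_k)$ is a domain (it sits inside $\D(\k)$): for a nonzero two-sided ideal $I$, the $W$-stable ideal $\bigcap_{w\in W}w(I)$ contains the nonzero product of the $w(I)$ and so is nonzero, and $W$-simplicity then suffices. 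You do still owe an argument that $\e$ is full in $\D(Q_k)\rtimes W$; that is not automatic and is part of the real work.

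The second half has a fatal gap. The module $\QQ_k$ cannot be a projective generator in $\mod(H_k)$: by Theorem~\ref{Qfat} it is finitely generated over $\c[V]$, so its Gelfand--Kirillov dimension is at most $\dim V$, whereas $H_k$ is a prime Noetherian algebra of Gelfand--Kirillov dimension $2\dim V$ (by the PBW property), and every nonzero finitely generated projective module over such an algebra has the same dimension as the algebra. So the mechanism by which you propose to transfer simplicity and Morita equivalence from $H_k$ to $\D(Q_k)\rtimes W$ collapses: an isomorphism $\End_{H_k}(\QQ_k)\cong(\D(Q_k)\rtimes W)^{op}$ may well hold (something of this kind is indeed used in \cite{BC11}), but without projectivity it yields no Morita context. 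The final step is also unsound even granting the previous one: Morita equivalence of crossed products does not descend to the base rings. Already $\D(V)$ and $\D(V)\rtimes W$ are not Morita equivalent for nontrivial $W$, since $K_0(\D(V))\cong\Z$ while $K_0(\D(V)\rtimes W)$ is the representation ring $R(W)$; so knowing $\D(Q_k)\rtimes W\sim\D(V)\rtimes W$ gives no formal route to $\D(Q_k)\sim\D(V)$. Moreover, the ``comparison with $k=0$'' silently invokes a Morita equivalence $H_k\sim H_0$ for integral $k$, itself a nontrivial theorem not available here as a black box. What is actually needed, and what \cite{BC11} (following \cite{BEG03} in the Coxeter case) supplies, is the explicit $(\D(Q_k),\D(V))$-bimodule $\P$ above together with its transpose $\P^\vee=\{D\in\D(\k):D(Q_k)\subseteq\c[V]\}$, and proofs that the two multiplication maps $\P\,\P^\vee\to\D(Q_k)$ and $\P^\vee\P\to\D(V)$ are surjective; one surjectivity follows from simplicity of $\D(V)$, the other from the simplicity of $\D(Q_k)$ established in your first half. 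Reorganizing the second half of your argument around that bimodule is the way to close the gap.
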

This result is surprising since, as explained above, 
$ \D(Q_k) $ is isomorphic to the ring of (twisted) differential operators
on a {\it singular} algebraic variety, and such rings usually do not
have good properties. 
Combined with standard Morita theory Theorem~\ref{ma} implies that 
$ \P := \{D \in \D(\k)\,:\,D(f) \in Q_k \ \mbox{\rm for all}\ f \in \c[V]\}\,$
is a projective right ideal of $ \D(V)$. This gives one of the only families of examples of non-free projective modules over higher Weyl algebras. In the Coxeter case, the Morita 
equivalence between the algebras $ \D(Q_k)$ for integral $k$'s was originally proved in \cite{BEG03}.

Theorem~\ref{ma} and Proposition~\ref{is} have another interesting consequence established 
in \cite{BC11} using $K$-theoretic arguments.
\begin{theorem}[\cite{BC11}, Corollary~4.6]
\la{Dchev}
$ \D(Q_k) $ is a non-free projective module over $ \D(Q_k)^W $.
\end{theorem}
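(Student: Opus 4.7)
The plan is to combine the simplicity of $\D(Q_k)$ (Theorem~\ref{ma}) with the identification $\D(Q_k)^W \cong U_k$ (Proposition~\ref{is}) to first establish projectivity by a Morita argument, and then to rule out freeness by a $K$-theoretic obstruction.

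First I would set $E := \D(Q_k) \rtimes W$ and let $\e := \frac{1}{|W|}\sum_{w \in W} w \in E$ be the symmetrizing idempotent. Routine identifications give $\e E \e \cong \D(Q_k)^W$ and $E\e \cong \D(Q_k)$ as a right $\D(Q_k)^W$-module (via $x\e \mapsto x$ for $x \in \D(Q_k)$). The core step is then to show $E\e E = E$, from which Morita theory with full idempotents yields that $E\e$ is a finitely generated projective generator over $\D(Q_k)^W$; in particular $\D(Q_k)$ is projective over $\D(Q_k)^W$. The equality $E\e E = E$ is equivalent to the simplicity of the crossed product $E$, which by Montgomery-type theorems on invariants of simple rings reduces to verifying that the $W$-action on the simple ring $\D(Q_k)$ is \emph{X-outer}. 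The latter is inherited from the faithful action of $W$ on the commutative subring $A_k \subset \D(Q_k)$, since any inner automorphism of $\D(Q_k)$ implemented by a unit would need to commute with $A_k$ generically.

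For non-freeness I would argue via $K_0$. Lemma~\ref{symbol} embeds $\D(Q_k) \hookrightarrow \D(V_\reg)$, which is free of rank $|W|$ over $\D(V_\reg)^W$; this forces any hypothetical free structure of $\D(Q_k)$ over $\D(Q_k)^W$ to be of rank $|W|$. Combining the Morita equivalence of the preceding paragraph ($\D(Q_k)^W \sim E$) with the Corollary to Theorem~\ref{simp} ($U_k \sim H_k$, valid at integral $k$ since $\H_k(W) \cong \c W$ is semisimple) yields
\[
K_0(\D(Q_k)^W) \cong K_0(E) \cong K_0(H_k) \cong R(W),
\]
the representation ring of $W$. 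Under this composite isomorphism, $[\D(Q_k)^W]$ corresponds to the trivial character $[\c]$ (it is sent to the class $[\e E]$ of the trivial-induced projective), while $[\D(Q_k)] = [E\e]$ corresponds to the regular representation $[\c W] = \sum_\tau (\dim \tau)[\tau]$ (it is sent to the class $[E]$ of the regular-induced projective). Freeness of rank $|W|$ would then force $[\c W] = |W|\cdot [\c]$ in $R(W)$, a contradiction for every nontrivial $W$.

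The main obstacle is the correct identification of the $K_0$-classes, in particular the claim that $[\D(Q_k)]$ corresponds to the regular representation of $W$ under the composite isomorphism above. The cleanest route is to trace these classes through the Grothendieck group of category $\O_k$, in which the standard modules $\{M_k(\tau)\}_{\tau \in \mathrm{Irr}(W)}$ form a $\Z$-basis; the PBW isomorphism $M_k(\tau) \cong \c[V]\otimes \tau$ (Theorem~\ref{keyprop}) directly encodes the $W$-character of $\tau$, making the identification $K_0(H_k) \cong R(W)$ explicit and pinning down the images of $[\D(Q_k)]$ and $[\D(Q_k)^W]$.
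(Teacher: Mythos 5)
The paper does not actually prove Theorem~\ref{Dchev}; it only cites \cite{BC11} and records that the proof is ``$K$-theoretic'' and rests on Theorem~\ref{ma} and Proposition~\ref{is}. Your two-step plan (projectivity from simplicity of the crossed product, non-freeness from a $K_0$ obstruction in the representation ring $R(W)$) is exactly that route, and the architecture is sound; but two steps are not justified as written. The first is your outerness argument: an inner automorphism $u(-)u^{-1}$ of $\D(Q_k)$ restricting to $w$ on $A_k$ need not \emph{commute} with $A_k$ --- it must conjugate $A_k$ according to $w$ --- so your sentence does not rule anything out. The correct (and standard) fix is that $\D(Q_k)$ has no units besides $\c^*$: by Lemma~\ref{symbol}(ii) the symbol map for the order filtration embeds $\grd\,\D(Q_k)$ into the commutative graded domain $\grd\,\D(V)$, so any unit has order $0$ and lies in $\D^0(Q_k)=A_k$, a connected graded domain whose units are scalars. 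Hence every nontrivial automorphism is outer, $E=\D(Q_k)\rtimes W$ is simple, $\e$ is a full idempotent, and projectivity follows as you say. (Also, $E\e E=E$ is \emph{implied by}, not equivalent to, simplicity of $E$ --- a minor point.)

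The more serious gap is the identification of the $K_0$-classes, which is the actual content of the ``$K$-theoretic argument.'' The module $\D(Q_k)$ is the free rank-one module over the ring $\D(Q_k)^W\cong U_k$; it is not an object of (the spherical analogue of) category $\O_k$, so the basis of standard modules $M_k(\tau)$ of the Grothendieck group of $\O_k$ does not tell you where $[\D(Q_k)]$ or $[\D(Q_k)^W]$ land. What is needed is the isomorphism $K_0(U_k)\cong K_0(H_k)\cong R(W)$ of $K$-groups of \emph{projective} modules, together with the fact that it sends $[H_k]$ to the regular representation and $[H_k\e]$ to $[\triv]$; in \cite{BC11} (following \cite{BEG03}) this comes from passing to $\grd\,H_k\cong\c[V\oplus V^*]\rtimes W$ via the PBW filtration and applying Quillen's homotopy invariance, not from category $\O$. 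There is a further subtlety you elide: $[E]\in K_0(E)$ and $[H_k]\in K_0(H_k)$ live over \emph{different} rings, each Morita equivalent to $U_k$, and showing that their images in $K_0(U_k)\cong R(W)$ agree (so that $[\D(Q_k)]=[E\e]$ really is the class of $\c W$) requires an extra input, e.g.\ a $W$-equivariant version of the Morita equivalence $\D(Q_k)\sim\D(V)$ of Theorem~\ref{ma}. Once those identifications are in place, your contradiction is correct and can even be simplified: freeness of \emph{any} rank $r$ would give $[\c W]=r\,[\triv]$ in $R(W)$, which fails for every nontrivial $W$, so you do not need to pin the rank down to $|W|$ (and your claim that $\D(V_\reg)$ is \emph{free} over $\D(V_\reg)^W$ is itself dubious --- it is only projective of rank $|W|$).
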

This result can be viewed as a noncommutative counterpart of Theorem~\ref{mattt}.

\bibliography{secondbibtexfile_3_6}{}
\bibliographystyle{amsalpha}

\end{document}